\newtheorem{theo}{Theorem}[section]
\newtheorem{rem}[theo]{Remark}
\newtheorem{propo}[theo]{Proposition}
\newtheorem{lemme}[theo]{Lemma}
\newtheorem{defi}[theo]{Definition}
\newtheorem{ex}[theo]{Example}
\newtheorem{hyp}[theo]{Assumption}
\newcommand{\E}{\mathbb{E}}
\newcommand{\R}{\mathbb{R}}
\newcommand{\Z}{\mathbb{Z}}
\newcommand{\PP}{\mathbb{P}}
\newcommand{\N}{\mathbb{N}}
\newcommand{\HH}{\mathcal{H}}
\newcommand{\s}{\textbf{s}}
\newcommand{\rr}{ m}
\newcommand{\U}{Y}
\colorlet{darkred}{red!70!black}
\def\Tr{\mathop{\mathrm{Tr}}}
\title[Weak error for trajectories of SPDE{\tiny s}]{Weak error estimates for trajectories of SPDE{\tiny s} under Spectral Galerkin discretization}
\author[C.-E. Br\'ehier]{Charles-Edouard  Br\'ehier}
\author[M. Hairer]{Martin Hairer}
\author[A.~M. Stuart]{Andrew~M. Stuart}
\address[C-E Br\'ehier]{Univ Lyon, Universit\'e Claude Bernard Lyon 1, CNRS UMR 5208, Institut Camille Jordan, 43 blvd. du 11 novembre 1918, F-69622 Villeurbanne cedex, France \newline
Mail: brehier@math.univ-lyon1.fr}
\address[M. Hairer, A.~M. Stuart]{\newline Mathematics Institute, University of Warwick, Coventry CV4 7AL,
\newline UK \newline
Mail: M.Hairer@warwick.ac.uk, A.M.Stuart@warwick.ac.uk
}
\begin{document}

\begin{abstract}
We consider stochastic semi-linear evolution equations which are 
driven by additive, spatially correlated, Wiener noise, and in particular consider problems of heat equation (analytic semigroup) and damped-driven wave equations (bounded semigroup) type. We discretize these equations by means of a spectral Galerkin projection, and we study the approximation of the probability distribution of the trajectories: test functions are regular, but depend on the values of the process on the
interval $[0,T]$.

We introduce a new approach in the context of quantative weak error analysis for discretization of SPDEs. The weak error is formulated using a deterministic function (It\^o map) of the stochastic convolution found when the
nonlinear term is dropped. The regularity properties of the It\^o map
are exploited, and in particular second-order Taylor expansions employed,
to transfer the error from spectral approximation of the stochastic 
convolution into the weak error of interest. 

We prove that the weak rate of convergence is twice the strong rate of convergence in two situations. First, we assume that the covariance operator commutes with the generator of the semigroup: the first order term in the weak error expansion cancels out thanks to an independence property. Second, we remove the commuting assumption, and extend the previous result, thanks to the analysis of a new error term depending on a commutator.



\end{abstract}

\maketitle

\section{Introduction}

The numerical analysis of stochastic differential equations (SDEs), in both the
weak and strong senses, has been an active area of research over the
last three decades \cite{kloeden1992numerical,MilsteinTretyakov}.  
The analysis of numerical methods for stochastic partial differential 
equations (SPDEs) has attracted a lot of attention and in recent years
a number of texts have appeared in this field; see for instance
the recent monographs~\cite{JentzenKloeden:11}, \cite{Kruse:14} 
and~\cite{LordPowellShardlow:14}. The aim of this article is to give a simple argument
allowing to relate the weak order to the strong order of convergence on the space of trajectories
for a class of spatial approximations to SPDEs. 

We focus on the following class of semilinear SPDEs, 
written using the stochastic evolution equations framework in Hilbert spaces, from \cite{DPZ}:
\begin{equation}\label{eq:SPDE_intro}
dX(t)=\mathcal{A}X(t)dt+\mathcal{F}\left(X(t)\right)dt+d\mathcal{W}^{\mathcal{Q}}(t), \; X(0)=x_0.
\end{equation}
The semi-linear equation~\eqref{eq:SPDE_intro} is driven by an additive Wiener process $\mathcal{W}^{\mathcal{Q}}$, where $\mathcal{Q}$ is a covariance operator.
The following parabolic, resp. hyperbolic, SPDEs can be written as~\eqref{eq:SPDE_intro}, with appropriate definitions of the coefficients $\mathcal{A}$, $\mathcal{F}$ and $\mathcal{Q}$ in terms of $A$, $F$ and~$Q$:
\begin{itemize}
\item {\em the semi-linear stochastic heat equation} (parabolic case), with $X=u$, 
\begin{equation}\label{eq:SPDE_para_intro}
du(t)=Au(t)dt+F\left(u(t)\right)dt+dW^Q(t), \; u(0)=u_0;
\end{equation}
\item {\em the damped-driven semi-linear wave equation} (hyperbolic case), with $X=(u,v)$
\begin{equation}\label{eq:SPDE_wave_intro}
\begin{cases}
du(t)=v(t)dt\\
dv(t)=-\gamma v(t)dt+Au(t)dt+F(u(t))dt+dW^{Q}(t).
\end{cases}
\end{equation}
\end{itemize}
These two equations will be the focus of our work.
Notation and assumptions on the coefficients are precised in Section~\ref{sect:Not_Ass} below. For simplicity, in this introductory section, we assume that $F:H\to H$ is of class $\mathcal{C}^2$.

The solution $X$ of~\eqref{eq:SPDE_intro} (well-posed under assumptions given below) is a continuous-time stochastic process taking values in a separable, infinite-dimensional Hilbert space, which we denote by $H$. As for deterministic PDE problems, two kinds of discretizations are required in order to build practical algorithms: a time-discretization, which in the stochastic context
is often a variant of the Euler-Maruyama method, and a space-discretization, which is based on finite differences, finite elements or spectral approximation. In this article, we only study the space-discretization error (no time-discretization), using a spectral Galerkin projection, {\it i.e.} by projecting the equation on vector spaces spanned by $N$ eigenvectors of the linear operator $A$. Precisely, $X$ is approximated by the solution $X_N$ of an equation of the form
\begin{equation}\label{eq:SPDE_projN_intro}
dX_N(t)=\mathcal{A}_NX_N(t)dt+
\mathcal{F}_N\left(X_N(t)\right)dt+d\mathcal{W}^{\mathcal{Q}_N}(t),
\end{equation}
where the coefficients $\mathcal{A}_N$, $\mathcal{F}_N$, $\mathcal{Q}_N$ and the initial condition $X_N(0)$ are defined using the orthogonal projection $P_N\in\mathcal{L}(H)$ onto the $N$-dimensional vector space spanned by $e_1,\ldots,e_N$, where $Ae_n=-\lambda_ne_n$, for all $n\in\N$, with $\lambda_{n+1}\ge \lambda_n\ge \lambda_1>0$.

\bigskip

When looking at rates of convergence for the discretization of SPDEs, the metric one uses to compare random 
variables plays an important role. Let $\mathcal{Z}$, resp. $(\mathcal{Z}_n)_{n\in \{1,2,\ldots\}}$, 
denote a random variable, respectively a sequence of random variables, defined on a probability space $(\Omega,\mathcal{F}_\Omega,\mathbb{P})$, with values in a Polish space $E$ (separable and complete metric space, with distance denoted by $d_E$).
Strong approximation is a pathwise concept, typically defined through 
convergence in the mean-square sense of $\mathcal{Z}_n$ to $\mathcal{Z}$, 
{\it i.e.} the convergence of the strong error $e_{n}^{\rm strong}=\bigl(\mathbb{E} d_E(\mathcal{Z},\mathcal{Z}_n)^2\bigr)^{1/2}$, or in an almost sure
sense; see \cite{kloeden1992numerical} for details.
Weak approximation corresponds to convergence in distribution of $\mathcal{Z}_n$ to $\mathcal{Z}$, 
which is often encoded in a weak error of the type $e_{n}^{\rm weak}=\sup_{\varphi\in\mathcal{C}}|\mathbb{E}\varphi(\mathcal{Z})-\mathbb{E}\varphi(\mathcal{Z}_n)|$, for some class $\mathcal{C}$ of sufficiently regular test functions $\varphi:E\to \R$.
If functions in $\mathcal{C}$ are uniformly Lipschitz continuous, it follows that $e_{n}^{\rm weak}=\text{O}(e_{n}^{\rm strong})$. The problem we address in our situation is to show (and quantify) that $e_{n}^{\rm weak}=\text{o}(e_{n}^{\rm strong})$. In many situations, it is known that
the weak order of convergence is twice the strong order. We establish, for
spectral approximation of SPDEs, situations where the weak order
exceeds the strong order and where it is, in some cases, precisely
twice the strong order. 

As the references below will substantiate, the ``weak twice strong''
type of result has been proved when $E=H$ and $\mathcal{Z}=X(T)$ for SPDE~\eqref{eq:SPDE_intro}, with sufficiently regular test functions, {\it i.e.} looking at the processes at a given deterministic time $T\in(0,+\infty)$, and a variety
of approximation methods.
In this article, we focus on a more difficult problem: the weak approximation in the separable Banach space $E=\mathcal{C}\bigl([0,T],H\bigr)$, referred to as the space of trajectories. In other words, we consider $\mathcal{Z}=\bigl(X(t)\bigr)_{t\in[0,T]}$. The class $\mathcal{C}$ of test functions is taken as a bounded subset of $\mathcal{C}_b^2(E,\R)$, the Banach space of functions $\varphi:E\to\R$ which are bounded, and admit first and second-order bounded and continuous derivatives.

We now review the literature on weak approximation of SPDEs driven by Wiener 
noise. Our aim is not to give an exhaustive list of references, but to focus on three approaches which 
have been studied in the case of semilinear SPDEs, with low spatial regularity of the noise perturbation.
In a first approach, one relies on a representation formula for the weak error, using the solution of a deterministic evolution PDE (Kolmogorov equation), depending on an $H$-valued variable. This approach is a generalization of the well-known method used to study the weak error of time-discretization schemes of SDEs, {\it i.e.} finite dimensional diffusion processes; see for instance~\cite{GrahamTalay},~\cite{MilsteinTretyakov} and references therein. For linear equations perturbed with additive-noise, see \cite{DebusschePrintems:09}, \cite{GeissertKovacsLarsson:09}, \cite{KovacsLarssonLindgren:12}, \cite{KovacsLarssonLindgren:13}.
These works however use a specific change of variables, and this trick does not seem to work for semilinear equations. 
A related approach using Malliavin calculus techniques to prove (when the noise has low spatial regularity) that the weak order is twice the strong order is available, see \cite{Debussche:11} for the original arguments, and \cite{AnderssonLarsson:13}, \cite{Brehier:14}, \cite{WangGan:13} for some extensions, and \cite{Hausenblas:03}, \cite{Wang:16} for related works. In a second approach, considered in \cite{ConusJentzenKurniawan:14}, \cite{JacobeJentzenWelti:15}, \cite{JentzenKurniawan:15}, one expands the weak error using a mild It\^o formula. This technique allows to improve the results obtained with the first approach, for SPDEs driven by multiplicative noise.
Finally, in a third approach, described in \cite{AnderssonKovacsLarsson:14}
and \cite{AnderssonKruseLarsson:13} for example, 
one estimates the weak error using a duality argument.

These approaches do not seem to apply to the weak approximation problem in the space of trajectories. 
Instead, we adopt an approach used in
\cite{mattingly2012diffusion,pillai2012optimal,PillaiStuartThiery:14} 
(see also the references therein) to prove diffusion limits for MCMC
methods. The idea goes as follows: write $\mathcal{W}^{\mathcal{A},\mathcal{Q}}$ for the stochastic convolution, 
i.e.\ the solution of the SPDE~\eqref{eq:SPDE_intro} when $\mathcal{F}=0$ and build a map 
$\Theta:\mathcal{C}\bigl([0,T],H\bigr)\to \mathcal{C}\bigl([0,T],H\bigr)$ so that the corresponding solution 
$X$ to the full SPDE is given by $X=\Theta(\mathcal{W}^{\mathcal{A},\mathcal{Q}})$.
The key point is that one can then find an error term $\mathcal{R}_N$ 
(which encodes the error due to the approximation of the initial condition and of the semilinear coefficient)
so that the solution $X_N$ to the approximate equation can be written as
\begin{equ}
X_N=\Theta(\mathcal{W}^{\mathcal{A}_N,\mathcal{Q}_N}+\mathcal{R}_N)\;,
\end{equ}
where $\Theta$ is the exact same map as above.
To simplify the discussion, in the present section, we do not discuss the role of the error term $\mathcal{R}_N$: indeed, it usually converges faster to $0$ than the contribution due to the approximation of the stochastic convolution.
We note two differences with~\cite{PillaiStuartThiery:14}. First, we express the process of interest $X$ in terms of the stochastic convolution $\mathcal{W}^{\mathcal{A},\mathcal{Q}}$, not in terms of the Wiener process $\mathcal{W}^{\mathcal{Q}}$; this proof approach is related to the structure of the SPDEs we are interested in. Second, we derive rates of weak convergence, using $\mathcal{C}^2$-regularity of the It\^o map, instead of only utilizing perservation
of weak convergence under continuous mappings.

It is now easy to explain why the weak order of convergence is expected to be twice the strong order one, under the condition that $Q$ commutes with $A$. Indeed, $\Theta$ is of class $\mathcal{C}^2$, with bounded first and second order derivatives. In particular, $\Theta$ is Lipschitz-continuous and thus, neglecting the contribution of $\mathcal{R}_N$,
 the strong error is expected to be of order
\begin{equ}
e_N^{\rm strong} = \bigl(\E\sup_{t\in[0,T]}|X(t)-X_N(t)|^2\bigr)^{1/2} \approx \bigl(\E\sup_{t\in[0,T]}|\mathcal{W}^{\mathcal{A},\mathcal{Q}}(t)-\mathcal{W}^{\mathcal{A}_N,\mathcal{Q}_N}(t)|^2\bigr)^{1/2}\;.
\end{equ}
To control the weak error, one relies on a Taylor second-order expansion of the function $\varphi\circ\Theta$ (where $\varphi:\mathcal{C}([0,T],H)\to \R$ is a test function of class $\mathcal{C}^2$). The key argument is then the independence of the processes $\mathcal{W}^{\mathcal{A}_N,\mathcal{Q}_N}$ and $\mathcal{W}^{\mathcal{A},\mathcal{Q}}-\mathcal{W}^{\mathcal{A}_N,\mathcal{Q}_N}$,
which is a consequence of using the projection operator $P_N$ onto a space $H_N$ spanned  by eigenvectors of both 
$A$ and $Q$ (thanks to the assumption that $Q$ commutes with $A$). Then the expectation of the first-order term in the Taylor expansion vanishes, which proves that the weak error is of size $e_N^{\rm weak}\leq C\E\sup_{t\in[0,T]}|\mathcal{W}^{\mathcal{A},\mathcal{Q}}(t)-\mathcal{W}^{\mathcal{A}_N,\mathcal{Q}_N}(t)|^2 \approx (e_N^{\rm strong})^2$.

Our main result is then Theorem~\ref{theo:weak} (in the parabolic case \eqref{eq:SPDE_para_intro}): it provides the weak order of convergence for trajectories of SPDEs in a general setting (depending on regularity assumptions on $F$ and $Q$, see Section~\ref{sect:Not_Ass}). In the hyperbolic setting, this result is given by Theorem~\ref{theo:wave}.

In the case when $A$ and $Q$ do not commute, the key independence argument above breaks down and it is not clear {\it a priori} whether the weak order is still twice, or at least larger than, the strong order. In Section~\ref{sect:non_comm}, we give a control of the weak error in this non-commuting situation (Theorem~\ref{theo:weak_gen}), with an additional error term which depends on an auxiliary stochastic convolution where commutators appear. We are able to control this additional term in the case where the operator $Q$
is a multiplication operator, in Section~\ref{sect:example_non_comm}: we provide a non-trivial extension of Theorem~\ref{theo:weak} in Theorem~\ref{theo:weak_special} where there is no commutativity,
but the weak order is still twice the strong order. 


Note that the arguments described above do not directly apply if one considers weak approximation of trajectories associated with discretization in time, or discretization in space using finite elements. These situations will be investigated in future work.

\bigskip

The article is organized as follows. In Section~\ref{sect:Not_Ass}, we present the (regularity, growth) assumptions and notation to ensure well-posedness of the parabolic SPDE~\eqref{eq:SPDE_intro}. In Section~\ref{sect:Ito_parabolic}, we introduce one of the main tools for our convergence analysis, namely the It\^o map (Section~\ref{sect:Itomap}). The spectral Galerkin discretization method is introduced in Section~\ref{sect:Galerkin_parabolic}.
The strong and weak orders of convergence, in the case when $A$ and $Q$ commute, are provided in Theorem~\ref{theo:strong}, and our main result, Theorem~\ref{theo:weak}.
In Section~\ref{sect:wave}, we generalize the arguments for the stochastic wave equation~\eqref{eq:SPDE_wave_intro}, see Theorem~\ref{theo:wave}. Finally, in Section~\ref{sect:non_comm} we generalize the approach in the case when $A$ and $Q$ do not commute: we provide a general result, Theorem~\ref{theo:weak_gen}, and identify weak rates of convergence in a specific case, Theorem~\ref{theo:weak_special}.

\section{Notations and Assumptions}\label{sect:Not_Ass}

In this section
we introduce sufficient assumptions to ensure well-posedness of the 
following SPDE, written in abstract form:
\begin{equation}\label{eq:SPDE}
du(t)=Au(t)dt+F\left(u(t)\right)dt+dW^Q(t), \; u(0)=u_0.
\end{equation}
We work with assumptions on $A$ which render the equation in semilinear
parabolic form.  We state the precise definitions and assumptions on $A$
and $F$ below, following the standard setting of \cite{DPZ}. These 
assumptions will also be used in Section~\ref{sect:wave}, where we 
study a semilinear damped-driven wave equation~\eqref{eq:SPDE_wave},
also constructed from $A$ and $F$. 

We describe the function space setting, the assumptions on $A$,
the assumptions on $F$ and the assumptions on $Q$, in turn,
in the following subsections.

\subsection{The function space}

Our state space is a separable, infinite-dimen\-sional, real Hilbert space $H$, equipped 
with its scalar product $\langle\cdot,\cdot\rangle_H$ and norm $|\cdot|_H$. We also use 
the notations $\langle\cdot,\cdot\rangle$ and $|\cdot|$ when no confusion is 
likely to arise.
A typical example to keep in mind is $H=L^2(\mathcal{D})$ where $\mathcal{D}\subset\R^d$ 
is a bounded, open set with smooth boundary.

\subsection{The linear operator}

The operator $A$ appearing in \eqref{eq:SPDE} is an unbounded linear operator on the Hilbert space $H$.

\begin{hyp}\label{ass:A}
The linear operator $A$ is defined on a dense domain $D(A)\subset H$, with values in $H$. 
It is self-adjoint with compact resolvent, such that there exists a non-decreasing sequence of 
strictly positive real numbers $(\lambda_k)_{k\in\N^{*}}$, and a corresponding orthonormal basis 
$(e_k)_{k\in\N^{*}}$ of $H$ such that
$Ae_k=-\lambda_ke_k$ for all $k\in \N^{*}$.
\end{hyp}
Here, we wrote $\N^{*}=\left\{1,2,\ldots\right\}$. The fact that $A$ has compact resolvent
implies that it is unbounded, in particular $\lim_{k \to \infty}\lambda_k = \infty$.
Under Assumptions \ref{ass:A}, 
the operator $A$ generates an analytic semigroup on $H$ denoted by  $(e^{tA})_{t\geq 0}$: for any $t\geq 0$
$$e^{tA}u=\sum_{k\in\N^*}e^{-t\lambda_k}\langle u,e_k\rangle e_k\;.$$
This semigroup enjoys good regularization properties, which we recall in 
Proposition~\ref{prop:regul}. To state them, we define the interpolation spaces 
$\HH^\s$ for $\s \in \R$ as the closure of the linear span of the $e_k$ under the norm
\begin{equ}
|u|_{\s}^2 = \sum_{k=1}^{+\infty}\lambda_{k}^{\s}\langle u,e_{k}\rangle^{2}\;.
\end{equ}
In particular, one has $\HH^0=H$ and $\HH^2=D(A)$.
It is immediate that, for every $\s \in \R$, 
$(-A)^{\s}$ is a bounded linear operator from $\HH^{2\s}$ to $H$.
Moreover, for any $\s\in \R$, $\HH^\s$ is a Hilbert space with scalar 
product $\langle \cdot,\cdot \rangle_{\s}$, where for any $x,y\in\HH^{\s}$
$$\langle x,y\rangle_{\s}=\langle (-A)^{\s/2}x,(-A)^{\s/2}y\rangle.$$

\begin{ex} 
A typical example is the Laplace operator in $\mathcal{D}=(0,1)$, complemented with homogeneous Dirichlet boundary conditions. In this case, $\lambda_k=\pi^2k^2$ and $e_k(\xi)=\sqrt{2}\sin(k\pi \xi)$.
Then $\HH^1$ is the space $H^1_0(D).$
\end{ex}

We can now state the semigroup regularization properties.
\begin{propo}\label{prop:regul}
For any $-1\leq\s_1\leq \s_2\leq 1$, there exists $C_{\s_1,\s_2}\in(0,+\infty)$ such that:
\begin{enumerate}
\item for any $t>0$ and $u\in \HH^{\s_1}$
$$|e^{tA}u|_{\s_2}\leq C_{\s_1,\s_2} t^{-(\s_2-\s_1)/2}|u|_{\s_1};$$
\item for any $0<t_1<t_2$ and $u\in \HH^{\s_2}$
$$|e^{t_2A}u-e^{t_1A}u|_{\s_1}\leq C_{\s_1,\s_2}(t_2-t_1)^{(\s_2-\s_1)/2}|u|_{\s_2}.$$
\item for any $0<t_1<t_2$ and $u\in \HH^{\s_1}$
$$|e^{t_2A}u-e^{t_1A}u|_{\s_1}\leq C_{\s_1,\s_2}\frac{(t_2-t_1)^{(\s_2-\s_1)/2}}{t_{1}^{(\s_2-\s_1)/2}}|u|_{\s_1}.$$
\end{enumerate}
\end{propo}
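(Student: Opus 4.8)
The plan is to diagonalize everything in the eigenbasis $(e_k)$ and reduce all three estimates to elementary one-variable inequalities for the functions $\lambda\mapsto e^{-t\lambda}$ on $\lambda\ge\lambda_1>0$. Writing $u=\sum_k u_ke_k$ with $u_k=\langle u,e_k\rangle$, the definitions of the norms give
$|e^{tA}u|_{\s_2}^2=\sum_k\lambda_k^{\s_2}e^{-2t\lambda_k}u_k^2$ and
$|e^{t_2A}u-e^{t_1A}u|_{\s_1}^2=\sum_k\lambda_k^{\s_1}(e^{-t_1\lambda_k}-e^{-t_2\lambda_k})^2u_k^2$. It therefore suffices to prove, pointwise in $\lambda>0$ and uniformly in $\lambda$, bounds comparing the multipliers occurring in these sums with the multipliers $\lambda^{\s_1}$, resp.\ $\lambda^{\s_2}$, occurring in $|u|_{\s_1}^2$, $|u|_{\s_2}^2$; summing against $u_k^2$ and taking square roots then yields the stated operator inequalities, with $C_{\s_1,\s_2}$ the square root of the scalar constant.

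For item (1) set $\alpha=\s_2-\s_1\in[0,2]$; the required estimate is $\sup_{\lambda>0}\lambda^{\alpha}e^{-2t\lambda}\le C\,t^{-\alpha}$, which after the substitution $x=2t\lambda$ reduces to the finiteness of $\sup_{x\ge 0}x^{\alpha}e^{-x}=\alpha^{\alpha}e^{-\alpha}$. The case $\alpha=0$ is just the contraction bound $|e^{tA}u|_{\s_1}\le|u|_{\s_1}$. For items (2) and (3) the common starting point is the factorization $e^{-t_1\lambda}-e^{-t_2\lambda}=e^{-t_1\lambda}\bigl(1-e^{-(t_2-t_1)\lambda}\bigr)$ together with the elementary bound $1-e^{-y}\le\min(1,y)\le y^{\beta}$, valid for all $y\ge 0$ and $\beta\in[0,1]$; we take $\beta=\alpha/2=(\s_2-\s_1)/2$, which lies in $[0,1]$ precisely thanks to the restriction $-1\le\s_1\le\s_2\le 1$.

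For item (2) we bound $e^{-t_1\lambda}\le 1$ and $1-e^{-(t_2-t_1)\lambda}\le((t_2-t_1)\lambda)^{\alpha/2}$, so that $(e^{-t_1\lambda}-e^{-t_2\lambda})^2\le(t_2-t_1)^{\alpha}\lambda^{\alpha}$; multiplying by $\lambda^{\s_1}$ gives the comparison with $\lambda^{\s_2}$, hence the bound in terms of $|u|_{\s_2}$. For item (3) we instead keep the factor $e^{-t_1\lambda}$ and apply the one-variable estimate of item (1) (with exponent $\alpha/2$ and time $t_1$) in the form $\lambda^{\alpha/2}e^{-t_1\lambda}\le C\,t_1^{-\alpha/2}$, obtaining $(e^{-t_1\lambda}-e^{-t_2\lambda})^2\le C\,(t_2-t_1)^{\alpha}\,t_1^{-\alpha}$; multiplying by $\lambda^{\s_1}$ gives the comparison with $\frac{(t_2-t_1)^{\alpha}}{t_1^{\alpha}}\lambda^{\s_1}$ and hence the bound in terms of $|u|_{\s_1}$.

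There is no genuine obstacle here: the proposition is a repackaging of scalar facts about $e^{-t\lambda}$ and its increments. The only points requiring a little care are bookkeeping ones: verifying that the admissible exponent range $\s_2-\s_1\in[0,2]$ is exactly what makes $\beta=(\s_2-\s_1)/2\le 1$ so that $\min(1,y)\le y^{\beta}$ applies; treating the degenerate cases $\s_1=\s_2$ and $t_1=t_2$ separately, where all inequalities are trivial; and collecting the several scalar constants into one constant $C_{\s_1,\s_2}$ depending only on $\s_1$ and $\s_2$ (in particular independent of $\lambda_1$, since every scalar bound above is uniform over $\lambda>0$).
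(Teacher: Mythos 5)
Your proof is correct and complete. The paper itself omits the proof, citing the general theory of analytic semigroups (Pazy, Ch.~2, Thm.~6.13), so you are not reproducing the authors' argument but supplying the natural elementary one available in this diagonal setting: since $A$ is self-adjoint with eigenbasis $(e_k)$ and $\lambda_k\ge\lambda_1>0$, all three estimates reduce to the scalar bounds $\sup_{x\ge0}x^{\alpha}e^{-x}<\infty$ and $1-e^{-y}\le\min(1,y)\le y^{\beta}$ for $\beta\in[0,1]$, and your bookkeeping (the substitution $x=2t\lambda$, the factorization $e^{-t_1\lambda}-e^{-t_2\lambda}=e^{-t_1\lambda}(1-e^{-(t_2-t_1)\lambda})$, and the reuse of the item-(1) scalar bound with exponent $\alpha/2$ for item (3)) is all accurate, including the observation that the range $\s_2-\s_1\in[0,2]$ is exactly what makes $\beta=(\s_2-\s_1)/2\le1$. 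What your route buys is a self-contained argument with explicit constants that are manifestly uniform in $\lambda>0$ and hence independent of $\lambda_1$; what the cited general result buys is applicability to non-self-adjoint generators of analytic semigroups, which is not needed here. The only cosmetic remark is that the degenerate case $t_1=t_2$ you mention is excluded by the hypothesis $0<t_1<t_2$, so it need not be treated.
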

We omit the proof of this classical result; for instance see \cite[Chapter 2, Theorem 6.13]{Pazy} for a general statement (analytic semigroups).

\subsection{The nonlinearity}

We consider a function $F$ defined on $\HH^{\s_F}$ for some nonnegative regularity parameter $\s_F$, and taking values in $\HH^{-\s_F}$.
\begin{hyp}\label{ass:F}
For some $\s_F\in [0,1)$, the map $F:\HH^{\s_F}\rightarrow \HH^{-\s_F}$ is of class $\mathcal{C}^2$, with bounded first and second-order Fr\'echet derivatives.
\end{hyp}

Equivalently, $F_{\s_F}:=A^{-\s_F/2}\circ F\circ A^{-\s_F/2}:H\rightarrow H$ is a function of class $\mathcal{C}^2$, with bounded derivatives. Due to the inclusion property of the spaces $\HH^\s$, it is natural to take $\s_F\geq 0$ as small as possible in Assumption \ref{ass:F}: if the regularity and boundedness conditions of Assumption \ref{ass:F} are satisfied for some $\s_F$, they are also satisfied for all $\s>\s_F$.

\begin{rem}
More generally, we can consider functions $F:\HH^{\s_{F}^{1}}\rightarrow \HH^{-\s_{F}^{2}}$ for $\s_{F}^{1}\neq\s_{F}^{2}$, and $\s_{F}^{1},\s_{F}^{2}\geq 0$. In this setting, it is natural to take the smallest possible values for both $\s_{F}^{1}$ and $\s_{F}^{2}$. However, thanks to a simple shift in the definition of the spaces $\HH^{\s}$, it is always 
possible to fit into the framework of Assumption \ref{ass:F} by redefining $H$.
\end{rem}

\begin{ex}
If $\Psi:\HH^{\s}\rightarrow \R$ is a function of class $\mathcal{C}^3$ with bounded derivatives, set $F=-D\Psi$. Then $F(u)\in\HH^{-\s}$, for any $u\in\HH^{\s}$, thanks to the natural identification of the dual space of $\HH^{\s}$ with $\HH^{-\s}$. When $\s>0$, the potential function $\Psi$ may thus only be defined on a strict subspace $\HH^{\s}$ of $\HH$.
\end{ex}

\subsection{The noise term}

The Stochastic PDE \eqref{eq:SPDE} is driven by an additive noise which is white in time, and can be either white (when $Q$ is the identity mapping), or colored in space.

The covariance operator $Q$ is assumed to satisfy the following conditions.
\begin{hyp}\label{ass:B}
The linear operator $Q:H\rightarrow H$ is self-adjoint, bounded, and there exists a bounded sequence $(q_k)_{k\in \N^*}$ of nonnegative real numbers such that for any $k\in \N^*$
$$Qe_k=q_ke_k,$$
where $(e_k)_{k\in \N^*}$ is as in Assumption \ref{ass:A}.
\end{hyp}

We do not require that $Q$ is trace class. Assumption~\ref{ass:B} implies that $A$ and $Q$ commute, which may be restrictive in practice. Commutativity 
is not required to obtain weak error estimates at a fixed time $t$, 
but we use it here to prove, under quite general conditions,
that the weak order of convergence for the trajectories 
is twice the strong order. In Section \ref{sect:non_comm}, we 
exhibit an example where one can prove such a result even though 
$A$ and $Q$ do not commute, but the general non-commuting case will
remain open.

We now recall the definition of the cylindrical Wiener process $W$, and of the associated $Q$-cylindrical Wiener process $W^Q$.
\begin{hyp}\label{ass:W}
$W$ is a cylindrical Wiener process on $H$, on a probability space $(\Omega,\mathcal{F},(\mathcal{F}_t)_{t\in\R^+},\PP)$, where the filtration $(\mathcal{F}_t)_{t\in\R^+}$ satisfies the usual conditions: for any $t\geq 0$
\begin{equation}\label{expansion_W}
W(t)=\sum_{k\in\N^*}\beta_{k}(t)e_k,
\end{equation}
where $(\beta_k)_{k\in\N^*}$ is a sequence of independent, standard, real-valued Wiener processes (with respect to the filtration $(\mathcal{F}_{t})_{t\in\R^+}$) and $(e_k)_{k\in \N^*}$ is the complete orthonormal system of $H$ introduced in Assumption \ref{ass:A}.

The series \eqref{expansion_W} converges in any Hilbert space $\tilde{H}$ such that $H$ is contained in $\tilde{H}$ with a Hilbert-Schmidt embedding mapping. The resulting process  depends neither on the choice of the complete orthonormal system, nor on the choice of
$\tilde H$ (modulo canonical embeddings).

Similarly, we define the $Q$-cylindrical Wiener process $W^Q(\cdot)$; since we do not assume in general 
that $Q$ is trace-class, $W^Q$ 
also takes values in some larger space $\tilde{H}$, containing $H$, as does
$W$. For any $t\geq 0$, set
\begin{equation}\label{expansion_W^Q}
W^Q(t)=\sum_{k\in\N^*}\sqrt{q_k}\beta_{k}(t)e_k.
\end{equation}
\end{hyp}

One of the main conditions we will require is that the stochastic convolution $W^{A,Q}$ (the solution of~\eqref{eq:SPDE} when $F=0$) takes values in $\HH^{\s_F}$. Assumptions on the covariance operator $Q$ will be made precise in Section~\ref{sect:SPDE_Itomap}, in particular see Proposition~\ref{propo:Link}.

In the sequel, we use the notation $\Tr\bigl(L\bigr)$ for the trace of a trace-class non-negative symmetric linear operator $L\in\mathcal{L}(H)$:
$$\Tr(L)=\sum_{n\in\N^*}\langle L\epsilon_n,\epsilon_n\rangle<+\infty,$$
for some (and therefore all) complete orthonormal basis $(\epsilon_n)_{n\in\N^*}$ of $H$.

\section{Well-posedness and the It\^o Map}\label{sect:Ito_parabolic}

In this section, we state well-posedness results for the Stochastic PDE \eqref{eq:SPDE}, and we define one of the essential tools for our study of the weak error on trajectories: the It\^o map. This application allows us to express the solution of the semilinear equation as a (deterministic) function of the solution of the linear equation ($F=0$) with the same noise perturbation.
Most of the material in this section is classical, but explicit
inclusion of the important definitions and properties will help
understanding of the weak trajectory error analysis in this paper.

For any regularity parameter $\s\in\R$ and any $T\in(0,+\infty)$, we introduce the space of trajectories
$$
\mathcal{C}_{\s,T}=\mathcal{C}([0,T],\HH^\s),
$$
the space of continuous functions of the time variable, with values in the Hilbert space $\HH^\s$. Elements of the space $\mathcal{C}_{\s,T}$ are referred to as trajectories in the sequel.
We also define the family of supremum norms $\|\cdot\|_{\infty,\s,T}$:
$$
\|\mathcal{X}\|_{\infty,\s,T}=\sup_{0\leq t\leq T}|\mathcal{X}(t)|_{\s}
$$
for any $\mathcal{X}\in\mathcal{C}_{\s,T}$; endowed with the associated topology, $\mathcal{C}_{\s,T}$ is a separable Banach space. 

In the subsequent subsections we study the deterministic problem arising
when $Q=0$, we define the It\^o map and we study the SPDE through the
It\^o map. 

\subsection{The deterministic semilinear PDE}

Under the global Lipschitz condition on $F:\HH^{\s_F}\rightarrow \HH^{-\s_F}$, from Assumption~\ref{ass:F}, the well-posedness in terms of mild solutions of the deterministic semi-linear equation
\begin{equation}\label{eqdeter}
\frac{d\U(t)}{dt}=A\U(t)+F(\U(t)),\; \U(0)=u_0\in\HH^\s.
\end{equation}
is a standard result, using regularization properties of the semigroup $\bigl(e^{tA}\bigr)_{t\in\R^+}$ (see Proposition~\ref{prop:regul}) and a Picard iteration argument. 

\begin{propo}\label{prop:eq_deter}
Assume $\s_{F}\leq \s<1$, and that Assumption \ref{ass:A} and \ref{ass:F} are satisfied. 
For any initial condition $u_0\in\HH^{\s}$, and any time $T\in(0,+\infty)$, there exists a unique mild solution of equation \eqref{eqdeter}, satisfying:
\begin{itemize}
\item $\U(0)=u_0$;
\item for any $t\in[0,T]$, $\U(t)\in\HH^{\s}$, and $t\mapsto \U(t)\in\mathcal{C}_{\s,T}$;
\item for any $t\geq 0$,
\begin{equation}\label{eq:mild_Y_def}
\U(t)=e^{tA}u_0+\int_{0}^{t}e^{(t-r)A}F(\U(r))dr.
\end{equation}
\end{itemize}
\end{propo}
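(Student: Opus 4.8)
The plan is to establish existence and uniqueness of the mild solution to~\eqref{eqdeter} by a standard fixed-point argument in the Banach space $\mathcal{C}_{\s,T}$, exploiting the smoothing estimates of Proposition~\ref{prop:regul} together with the global Lipschitz property of $F$ coming from Assumption~\ref{ass:F}. First I would define the integral operator $\mathcal{T}$ on $\mathcal{C}_{\s,T}$ by
\begin{equation*}
\bigl(\mathcal{T}\mathcal{X}\bigr)(t) = e^{tA}u_0 + \int_0^t e^{(t-r)A} F(\mathcal{X}(r))\,dr\;,
\end{equation*}
and check that $\mathcal{T}$ maps $\mathcal{C}_{\s,T}$ into itself: the first term lies in $\mathcal{C}_{\s,T}$ by the strong continuity of the analytic semigroup on $\HH^{\s}$ (for $u_0 \in \HH^{\s}$), and for the convolution term one writes, for $\mathcal{X} \in \mathcal{C}_{\s,T}$, $|F(\mathcal{X}(r))|_{-\s_F} \le |F(0)|_{-\s_F} + L\,|\mathcal{X}(r)|_{\s_F} \le C(1 + \|\mathcal{X}\|_{\infty,\s,T})$ using $\s_F \le \s$ and the inclusion $\HH^{\s} \hookrightarrow \HH^{\s_F}$. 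Applying part~(1) of Proposition~\ref{prop:regul} with $\s_1 = -\s_F$, $\s_2 = \s$ gives $|e^{(t-r)A} F(\mathcal{X}(r))|_{\s} \le C(t-r)^{-(\s+\s_F)/2}|F(\mathcal{X}(r))|_{-\s_F}$, and since $\s < 1$ and $\s_F < 1$ the exponent $(\s+\s_F)/2 < 1$, so the singularity is integrable and the convolution term is well-defined with values in $\HH^{\s}$; a similar estimate combined with part~(3) of Proposition~\ref{prop:regul} yields time-continuity, so $\mathcal{T}\mathcal{X} \in \mathcal{C}_{\s,T}$.

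Next I would prove the contraction estimate. For $\mathcal{X}, \mathcal{Y} \in \mathcal{C}_{\s,T}$, the Lipschitz bound on $F$ and Proposition~\ref{prop:regul}(1) give
\begin{equation*}
\bigl|(\mathcal{T}\mathcal{X})(t) - (\mathcal{T}\mathcal{Y})(t)\bigr|_{\s} \le C L \int_0^t (t-r)^{-(\s+\s_F)/2}\,|\mathcal{X}(r) - \mathcal{Y}(r)|_{\s}\,dr \le C L\,\frac{T^{1-(\s+\s_F)/2}}{1-(\s+\s_F)/2}\,\|\mathcal{X} - \mathcal{Y}\|_{\infty,\s,T}\;.
\end{equation*}
For $T$ small enough this constant is $< 1$, so $\mathcal{T}$ is a contraction and Banach's fixed point theorem produces a unique mild solution on $[0,T_0]$ for some $T_0 > 0$ depending only on $L$, $\s$, $\s_F$ (not on $u_0$). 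Since the local existence time is uniform in the size of the initial data, one iterates the construction on successive intervals $[T_0, 2T_0], [2T_0, 3T_0], \dots$ to extend the solution to the whole interval $[0,T]$ for arbitrary $T < \infty$; uniqueness on $[0,T]$ follows by concatenating the local uniqueness statements (or directly via a Gronwall-type argument on the above integral inequality). The three listed properties ($\mathcal{X}(0) = u_0$, $\mathcal{X}(t) \in \HH^{\s}$ with $t \mapsto \mathcal{X}(t)$ continuous, and the mild formula~\eqref{eq:mild_Y_def}) are then immediate from the fixed-point identity.

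I do not expect any genuine obstacle here — this is a textbook parabolic fixed-point argument — but the one point requiring mild care is verifying time-continuity of the convolution term at $t = 0$ and across gluing points, since the integrand has an integrable singularity; this is handled by splitting the integral near the endpoint and using the dominated convergence theorem together with Proposition~\ref{prop:regul}(3) to control the difference $e^{(t_2-r)A} - e^{(t_1-r)A}$ acting on $F(\mathcal{X}(r))$. One should also note that the condition $\s_F \le \s < 1$ is exactly what makes both the mapping property (needs $\s_F \le \s$ so that $\mathcal{X}(r) \in \HH^{\s} \subset \HH^{\s_F}$) and the integrability of the singularity (needs $\s + \s_F < 2$) work simultaneously.
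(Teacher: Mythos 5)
Your argument is correct and is precisely the route the paper takes: the paper gives no detailed proof, stating only that well-posedness follows from the semigroup regularization properties of Proposition~\ref{prop:regul} and a Picard iteration argument, which is exactly the fixed-point scheme you carry out. Your verification that the condition $\s_F\leq\s<1$ ensures both the embedding $\HH^{\s}\hookrightarrow\HH^{\s_F}$ and the integrability of the singularity $(t-r)^{-(\s+\s_F)/2}$ is the right way to fill in the details.
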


Note that for $\s_F\leq \s<1$, $F:\HH^{\s}\rightarrow\HH^{-\s_{F}}$ is globally Lipschitz continuous, thanks to Assumption~\ref{ass:F}.



\subsection{The It\^o map}\label{sect:Itomap}

The result of Proposition \ref{prop:eq_deter} is extended in a straightforward manner (using the Picard iteration argument) to the case where some perturbation by some function $w\in\mathcal{C}_{\s,T}$ is added to the mild formulation \eqref{eq:mild_Y_def}. 
\begin{propo}\label{prop:eq_deter+w}
Assume $\s_{F}\leq \s<1$, and that Assumption \ref{ass:A} and \ref{ass:F} are satisfied. Let $u_0\in\HH^{\s_0}$ for $\s_0\geq \s$, let $T\in(0,+\infty)$,
and let $w\in\mathcal{C}_{\s,T}$. 
Then there exists a unique function $\U^w\in\mathcal{C}_{\s,T}$ such that for any $t\geq 0$,
\begin{equation}\label{eq:Y^w}
\U^w(t)=e^{tA}u_0+\int_{0}^{t}e^{(t-s)A}F(\U^w(r)) dr+w(t).
\end{equation}
\end{propo}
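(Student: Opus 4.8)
The plan is to run a Banach fixed-point argument on the integral operator defined by the right-hand side of \eqref{eq:Y^w}, exactly mirroring the proof of Proposition~\ref{prop:eq_deter} but now tracking the extra additive term $w$. Define, for $\U\in\mathcal{C}_{\s,T}$, the map
\begin{equ}
\bigl(\Phi \U\bigr)(t)=e^{tA}u_0+\int_0^t e^{(t-r)A}F(\U(r))\,dr+w(t)\;,\qquad t\in[0,T]\;.
\end{equ}
First I would check that $\Phi$ maps $\mathcal{C}_{\s,T}$ into itself: the term $e^{tA}u_0$ lies in $\mathcal{C}_{\s,T}$ (indeed in $\mathcal{C}_{\s_0,T}\subset\mathcal{C}_{\s,T}$ since $\s_0\geq\s$, using strong continuity of the analytic semigroup on $\HH^{\s_0}$ and Proposition~\ref{prop:regul}(1) to handle the inclusion), the term $w$ lies in $\mathcal{C}_{\s,T}$ by hypothesis, and the convolution term is continuous with values in $\HH^\s$ by a standard argument: since $\U\in\mathcal{C}_{\s,T}$ and $F:\HH^{\s_F}\to\HH^{-\s_F}$ is bounded and Lipschitz with $\s_F\le\s$, the integrand $r\mapsto e^{(t-r)A}F(\U(r))$ has $\HH^\s$-norm controlled by $C(t-r)^{-(\s+\s_F)/2}$ via Proposition~\ref{prop:regul}(1), which is integrable because $(\s+\s_F)/2<1$ (here $\s<1$ and $\s_F\le\s<1$), and continuity in $t$ follows from dominated convergence together with the strong continuity estimates in Proposition~\ref{prop:regul}.

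Next I would establish the contraction estimate. For $\U_1,\U_2\in\mathcal{C}_{\s,T}$, the difference $\Phi\U_1-\Phi\U_2$ has no $u_0$ or $w$ contribution, so
\begin{equ}
\bigl|(\Phi \U_1)(t)-(\Phi \U_2)(t)\bigr|_\s\le \int_0^t \bigl|e^{(t-r)A}\bigl(F(\U_1(r))-F(\U_2(r))\bigr)\bigr|_\s\,dr\le C\!\!\int_0^t (t-r)^{-(\s+\s_F)/2}\,|\U_1(r)-\U_2(r)|_\s\,dr\;,
\end{equ}
using Proposition~\ref{prop:regul}(1) from $\HH^{-\s_F}$ to $\HH^\s$ and the global Lipschitz bound on $F$ from Assumption~\ref{ass:F} (viewed as $F:\HH^\s\to\HH^{-\s_F}$, as noted just after Proposition~\ref{prop:eq_deter}). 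Since the exponent $\alpha:=(\s+\s_F)/2<1$, iterating this bound (or equivalently working on a short interval $[0,T_0]$ with $T_0$ small enough that $C\,T_0^{1-\alpha}/(1-\alpha)<1$, then extending) shows $\Phi$ is a contraction, hence has a unique fixed point $\U^w$ in $\mathcal{C}_{\s,T}$. The extension to the full interval $[0,T]$ is immediate because $T_0$ depends only on $C$ and $\alpha$, not on $u_0$ or $w$, so one concatenates finitely many short-interval solutions.

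I do not expect any genuine obstacle here; the statement is a routine perturbation of the already-cited Proposition~\ref{prop:eq_deter}. The only points requiring a little care are: (i) the singularity $(t-r)^{-\alpha}$ in the convolution kernel, handled by the assumption $\s_F\le\s<1$ which guarantees $\alpha<1$ and hence an integrable singularity, so a weighted/iterated Gronwall argument rather than a naive one is needed for the contraction; and (ii) checking that the additive perturbation $w$, which need not have any extra regularity beyond $\mathcal{C}_{\s,T}$, does not interfere — but it enters $\Phi$ purely additively and cancels in all difference estimates, so uniqueness and the contraction property are unaffected. The mild continuity of $t\mapsto e^{tA}u_0$ at $t=0$ in $\HH^\s$ uses $u_0\in\HH^{\s_0}$ with $\s_0\ge\s$, which is exactly why that hypothesis is included.
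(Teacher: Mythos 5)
Your proof is correct and takes exactly the route the paper has in mind (the paper only says the result "is extended in a straightforward manner using the Picard iteration argument" and gives no details): a Banach fixed-point/Picard iteration for the map defined by the right-hand side of \eqref{eq:Y^w}, relying on the smoothing estimate of Proposition~\ref{prop:regul} with integrable singularity exponent $(\s+\s_F)/2<1$ and the global Lipschitz continuity of $F:\HH^{\s}\to\HH^{-\s_F}$, the additive perturbation $w$ cancelling in all difference estimates. Nothing further is needed.
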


We can now define the It\^o map.
\begin{defi}
The It\^o map associated with the SPDE \eqref{eq:SPDE} is defined as the 
map $\Theta\colon w\in\mathcal{C}_{\s,T}\mapsto \U^w$, with $\U^w$ given by~\eqref{eq:Y^w}.
\end{defi}

The It\^o map $\Theta$ depends on the regularity parameter $\s$, the linear operator $A$, 
the nonlinear coefficient $F$, the initial condition $x_0$ and the time $T\in(0,+\infty)$. However, to lighten the notation, we do not mention explicitly these dependences in the sequel.
The It\^o map inherits the regularity properties of $F$ from Assumption \ref{ass:F}.

\begin{theo}\label{theo:Itomap}
The It\^o map $\Theta$ is of class $\mathcal{C}^2$ on the Banach space $\mathcal{C}_{\s,T}=\mathcal{C}([0,T],\HH^\s)$, with bounded Fr\'echet derivatives of first and second order.
\end{theo}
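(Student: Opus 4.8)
The plan is to show that $\Theta$ is $\mathcal{C}^2$ by viewing $\U^w$ as the solution of a fixed-point equation whose data depends on $w$, and then applying the implicit function theorem (or, equivalently, differentiating the fixed-point map directly since it is a contraction). Concretely, define $\Phi\colon \mathcal{C}_{\s,T}\times\mathcal{C}_{\s,T}\to\mathcal{C}_{\s,T}$ by
\[
\Phi(w,\U)(t) = \U(t) - e^{tA}u_0 - \int_0^t e^{(t-r)A}F(\U(r))\,dr - w(t),
\]
so that $\U^w$ is characterized by $\Phi(w,\U^w)=0$. The key analytic input is that the nonlinear Nemytskii-type operator $\mathcal{N}\colon \U\mapsto \bigl(t\mapsto \int_0^t e^{(t-r)A}F(\U(r))\,dr\bigr)$ is $\mathcal{C}^2$ from $\mathcal{C}_{\s,T}$ to itself, with derivatives given by the ``obvious'' formulas: $D\mathcal{N}(\U)\cdot h$ is $t\mapsto \int_0^t e^{(t-r)A}DF(\U(r))h(r)\,dr$, and similarly for $D^2\mathcal{N}$ with $D^2F$. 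This uses Assumption~\ref{ass:F} (the map $F_{\s_F}=A^{-\s_F/2}F A^{-\s_F/2}$ is $\mathcal{C}^2$ with bounded derivatives on $H$) together with Proposition~\ref{prop:regul}(1): the operator $e^{(t-r)A}$ maps $\HH^{-\s_F}$ to $\HH^{\s}$ with norm bounded by $C(t-r)^{-(\s+\s_F)/2}$, and since $\s+\s_F<2$ this singularity is integrable, so the convolution integral converges in $\HH^\s$ uniformly in $t$.

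**The steps**, in order. First I would establish that $\mathcal{N}$ is well-defined and Lipschitz from $\mathcal{C}_{\s,T}$ to $\mathcal{C}_{\s,T}$, with Lipschitz constant that can be made $<1$ on a short time interval $[0,\tau]$ (or, alternatively, absorbed by working with a weighted sup-norm $\sup_t e^{-\beta t}|\cdot|_\s$ for $\beta$ large); this is exactly the estimate underlying the Picard argument already invoked for Proposition~\ref{prop:eq_deter+w}, so it is essentially given. Second, I would prove Fréchet differentiability of $\mathcal{N}$: write the difference quotient, insert $F(\U(r)+h(r))-F(\U(r))-DF(\U(r))h(r)$, and bound it using the mean-value form of the remainder together with boundedness of $D^2F$ and continuity of $DF$; the factor $(t-r)^{-(\s+\s_F)/2}$ is integrable so the remainder is $o(\|h\|_{\infty,\s,T})$ uniformly in $t$. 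The same scheme, one order up, gives that $D\mathcal{N}$ is itself $\mathcal{C}^1$ with the stated $D^2\mathcal{N}$. Third, I would apply the implicit function theorem to $\Phi$: $\Phi$ is $\mathcal{C}^2$ jointly, and $D_\U\Phi(w,\U) = \mathrm{Id} - D\mathcal{N}(\U)$ is invertible in $\mathcal{L}(\mathcal{C}_{\s,T})$ because $\|D\mathcal{N}(\U)\|<1$ in the weighted norm (uniformly in $\U$, since $\|DF\|_\infty<\infty$) — hence a Neumann series inverts it with uniformly bounded inverse. This yields that $\Theta=\U^w$ is $\mathcal{C}^2$ in $w$.

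**Boundedness of the derivatives** is then the last step and requires the uniform-in-$w$ control. Differentiating the identity $\U^w = e^{\cdot A}u_0 + \mathcal{N}(\U^w) + w$ gives $D\Theta(w)\cdot h = (\mathrm{Id}-D\mathcal{N}(\U^w))^{-1} h$, whose norm is bounded by the Neumann-series bound, independent of $w$ — here it is crucial that the contraction constant of $D\mathcal{N}$ depends only on $\|DF\|_\infty$ and $T$, not on $\U^w$. For the second derivative, differentiate once more: $D^2\Theta(w)\cdot(h,k)$ solves a linear equation of the form $(\mathrm{Id}-D\mathcal{N}(\U^w))\,[D^2\Theta(w)(h,k)] = D^2\mathcal{N}(\U^w)\bigl(D\Theta(w)h, D\Theta(w)k\bigr)$, and the right-hand side is controlled by $\|D^2F\|_\infty \|D\Theta(w)h\|\,\|D\Theta(w)k\|$, again uniformly in $w$; inverting gives the uniform bound on $D^2\Theta$.

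**The main obstacle** I anticipate is not any single estimate but bookkeeping the interplay of the three regularity indices $\s_F \le \s < 1$: one must check throughout that $DF(u)$ and $D^2F(u)$, a priori only nice as maps involving $\HH^{\s_F}$ and $\HH^{-\s_F}$, compose correctly with $e^{(t-r)A}$ so that everything lands back in $\HH^\s$ with an \emph{integrable} time singularity — this forces the constraint $\s+\s_F<2$, which is exactly guaranteed by $\s<1$ and $\s_F<1$. A secondary, more technical point is verifying \emph{continuity} (not just existence) of $D\mathcal{N}$ and $D^2\mathcal{N}$ as maps into the appropriate operator spaces; this reduces, via dominated convergence in the convolution integral, to continuity of $u\mapsto DF(u)$ and $u\mapsto D^2F(u)$, which is part of Assumption~\ref{ass:F}, but the domination step must be written with the $(t-r)^{-(\s+\s_F)/2}$ weight in hand.
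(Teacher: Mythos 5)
Your proposal is correct and follows essentially the same route as the paper: the paper's (sketched) proof also applies the Implicit Function Theorem to the mild-formulation fixed-point equation, with the key input being the $\mathcal{C}^2$-regularity of the Nemytskii/convolution operator $\U\mapsto\int_0^\cdot e^{(\cdot-r)A}F(\U(r))\,dr$ obtained from Assumption~\ref{ass:F} and the integrable smoothing singularity $(t-r)^{-(\s+\s_F)/2}$. You merely fill in the details the paper omits (the Neumann-series invertibility of $\mathrm{Id}-D\mathcal{N}(\U^w)$ and the resulting uniform bounds on $D\Theta$ and $D^2\Theta$), and these details are accurate.
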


We only give a sketch of proof (see~\cite{PillaiStuartThiery:14} for details on the continuity of $\Theta$). Theorem~\ref{theo:Itomap} is a consequence of the Implicit Function Theorem. Indeed, the mappings $(y,w)\in\mathcal{C}_{\s,T}\times \mathcal{C}_{\s,T}\mapsto (F\circ y,w)\in \mathcal{C}_{-\s,T}\times \mathcal{C}_{\s,T}$ and $(z,w)\in \mathcal{C}_{-\s,T}\times \mathcal{C}_{\s,T} \mapsto Y\in \mathcal{C}_{\s,T}$, such that $Y(t)=\int_{0}^{t}e^{(t-r)A}z(r)dr+w(t)$, are of class $\mathcal{C}^2$.

\subsection{The SPDE and the It\^o map}\label{sect:SPDE_Itomap}

The study of the well-posedness of the SPDE \eqref{eq:SPDE} is done in two steps. First we consider the linear case with additive noise, i.e. when $F$ is identically $0$ and the initial condition is $x_0=0$; the unique mild solution is the so-called stochastic convolution, for which we give below the necessary properties concerning spatial regularity. Second, we consider the full semi-linear equation \eqref{eq:SPDE}, and since the noise is additive we use the It\^o map to define solutions, in a strong sense with respect to the probability space. The material is standard, see~\cite{DPZ}.

We first define an important regularity parameter $\s_Q$, depending on $Q$.
\begin{defi}\label{defi:s_Q}
Assume that $\Tr((-A)^{-1}Q)<+\infty$, and introduce
\begin{equation}\label{eq:s_Q}
\s_Q^0=\sup\left\{ \s\in\R^+; \Tr((-A)^{\s-1}Q)<+\infty\right\}\geq 0.
\end{equation}
We also set $\s_Q=\min(s_Q^0,1)$.
\end{defi}

\begin{propo}
The linear stochastic equation with additive noise,
\begin{equation}\label{eqstolin}
dZ(t)=AZ(t)dt+dW^Q(t),\; Z(0)=0,
\end{equation}
admits a unique mild solution $Z\in\mathcal{C}([0,T],\HH^\s)$, for any $\s<\s_Q$, and any $T\in(0,+\infty)$; this is the $\HH^\s$-valued process such that for any $0\leq t\leq T$
$$Z(t)=\int_{0}^{t}e^{(t-r)A}dW^Q(r).$$
This process is denoted by $W^{A,Q}$ and is called the stochastic convolution.
\end{propo}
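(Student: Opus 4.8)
The plan is to prove existence, uniqueness, and the stated spatial regularity of the stochastic convolution $W^{A,Q}$ by a direct second-moment computation combined with a Kolmogorov-type continuity argument, exactly in the spirit of \cite[Ch.~5]{DPZ}. First I would fix $\s<\s_Q$ and $T\in(0,+\infty)$, and define, for each $t\in[0,T]$, the candidate $Z(t)=\int_0^t e^{(t-r)A}\,dW^Q(r)$ as an $\HH^\s$-valued stochastic integral. Using the diagonalization $Ae_k=-\lambda_k e_k$, $Qe_k=q_k e_k$ from Assumptions~\ref{ass:A} and \ref{ass:B}, together with the expansion~\eqref{expansion_W^Q}, one has the explicit series representation
\begin{equ}
Z(t)=\sum_{k\in\N^*}\sqrt{q_k}\Bigl(\int_0^t e^{-(t-r)\lambda_k}\,d\beta_k(r)\Bigr)e_k\;,
\end{equ}
so that by the It\^o isometry and independence of the $\beta_k$,
\begin{equ}
\E|Z(t)|_\s^2=\sum_{k\in\N^*}\lambda_k^\s q_k\int_0^t e^{-2(t-r)\lambda_k}\,dr=\sum_{k\in\N^*}\lambda_k^\s q_k\,\frac{1-e^{-2t\lambda_k}}{2\lambda_k}\leq \frac12\sum_{k\in\N^*}\lambda_k^{\s-1}q_k\;.
\end{equ}
The point is that for $\s<\s_Q\leq\s_Q^0$ the right-hand side is exactly $\tfrac12\Tr((-A)^{\s-1}Q)<+\infty$ by Definition~\ref{defi:s_Q}, so $Z(t)$ is a well-defined, centred Gaussian $\HH^\s$-valued random variable with second moment bounded uniformly in $t\in[0,T]$. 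This also shows the series converges in $L^2(\Omega;\HH^\s)$, and one checks the mild-equation identity $Z(t)=\int_0^t e^{(t-r)A}\,dW^Q(r)$ holds by construction.

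Next I would establish temporal continuity. The natural route is to estimate $\E|Z(t)-Z(s)|_{\s}^{2}$ for $0\le s\le t\le T$ and then upgrade to a continuous modification via Kolmogorov's continuity theorem, using Gaussianity to pass from the second moment to arbitrarily high moments (which is what makes Kolmogorov applicable with a H\"older exponent). Writing $Z(t)-Z(s)=\int_0^s\bigl(e^{(t-r)A}-e^{(s-r)A}\bigr)\,dW^Q(r)+\int_s^t e^{(t-r)A}\,dW^Q(r)$, the It\^o isometry gives two terms; the second is controlled as above by $\sum_k \lambda_k^{\s-1}q_k(1-e^{-2(t-s)\lambda_k})$, and for the first one uses $|e^{(t-s)\lambda_k}-1|\,e^{-(s-r)\lambda_k}$ bounds of the form $(1-e^{-(t-s)\lambda_k})\le (\lambda_k(t-s))^{\theta}$ for $\theta\in(0,1]$. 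Choosing $\theta$ small enough that $\s+2\theta<\s_Q$ — possible precisely because $\s$ was taken strictly below $\s_Q$ — one gets $\E|Z(t)-Z(s)|_\s^2\le C(t-s)^{\theta}$ with $C$ finite, hence $\E|Z(t)-Z(s)|_\s^{2p}\le C_p(t-s)^{p\theta}$ for all $p\ge 1$, and Kolmogorov yields a modification with trajectories in $\mathcal{C}([0,T],\HH^\s)=\mathcal{C}_{\s,T}$.

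For uniqueness of the mild solution I would argue that any two $\HH^\s$-valued processes satisfying $Z(t)=\int_0^t e^{(t-r)A}\,dW^Q(r)$ must agree for each fixed $t$ (the stochastic integral against $W^Q$ being uniquely defined up to modification), and then continuity of both in $t$ forces them to be indistinguishable on $[0,T]$. The main obstacle — really the only delicate point — is the bookkeeping in the temporal-regularity estimate: one must be careful to split the $e^{(t-r)A}-e^{(s-r)A}$ term so that the extra powers of $\lambda_k$ incurred by the H\"older-in-time trick are absorbed into the gap between $\s$ and $\s_Q$, rather than violating summability; this is exactly why the statement asserts regularity for $\s<\s_Q$ and not for $\s=\s_Q$. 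Everything else is a routine application of the It\^o isometry, Gaussian moment equivalence, and Kolmogorov's theorem, and I would simply cite \cite{DPZ} for the parts that are entirely standard.
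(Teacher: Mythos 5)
Your argument is correct, but it is not the route the paper takes. The paper does not actually prove this proposition: it is flagged as standard material, and in the proof of Proposition~\ref{propo:Link} the existence of a continuous $\HH^\s$-valued version of $W^{A,Q}$ is obtained by citing \cite[Theorem~5.9]{DPZ}, whose underlying technique --- and the one the authors deploy explicitly in \eqref{eq:def_Gamma}--\eqref{eq:proof_strong_sto} within the proof of Theorem~\ref{theo:strong} --- is the factorization method: one writes the stochastic convolution as $\Gamma(Z^{A,Q})$ with $\Gamma$ as in \eqref{eq:def_Gamma} and $Z^{A,Q}$ as in \eqref{eq:aux_Z}, and uses that $\Gamma$ maps $L^p([0,T],\HH^{\s})$ into $\mathcal{C}([0,T],\HH^{\s})$ for $p$ large. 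You instead run the direct route: an explicit diagonal second-moment computation (legitimate here because $A$ and $Q$ are simultaneously diagonalized under Assumption~\ref{ass:B}), the increment bound $\E|Z(t)-Z(s)|_{\s}^{2}\le C(t-s)^{\theta}$ obtained by trading a factor $(t-s)^{\theta}$ against powers of $\lambda_k$ inside the gap between $\s$ and $\s_Q$, Gaussian moment equivalence, and Kolmogorov's continuity theorem; the uniqueness argument is likewise fine. The trade-off is that your version is elementary and self-contained but leans on the spectral and Gaussian structure, whereas the factorization route is needed by the paper anyway to produce the $L^p$ sup-norm bounds on $P_N^\perp W^{A,Q}$ in \eqref{eq:proof_strong_sto} and to handle the non-commuting setting of Section~\ref{sect:non_comm} (see the proof of Lemma~\ref{lem:weak_special_comm}), so adopting it here avoids setting up two pieces of machinery. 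Two cosmetic remarks: the factor in your splitting should read $|e^{-(t-s)\lambda_k}-1|\,e^{-(s-r)\lambda_k}$ (sign of the exponent), and the summability you actually need is $\s+2\theta<\s_Q^0$, which your stronger requirement $\s+2\theta<\s_Q$ of course implies.
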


For the ease of the exposition, we define a set of admissible parameters $\s$.
\begin{defi}\label{defi:admiss}
A parameter $\s\in\R^+$ is called \emph{admissible} if it satisfies $\s_F\leq \s<1$ 
and $\s< s_Q$. 
\end{defi}

The set of admissible parameters is of course non empty if and only if
$\s_F< \s_Q$, so from now on we assume this is the case. Note that $\s=\s_F$ is an admissible parameter.

\begin{propo}\label{propo:Link}
Let $T\in(0,+\infty)$ and assume that $u_0\in\HH^{\s}$
for some admissible $\s$. Then \eqref{eq:SPDE} admits a unique mild solution $X$, {\it i.e.} an $\HH^{\s}$-valued process such that, for any $0\leq t\leq T$,
\begin{equation}\label{mildsto}
u(t)=e^{tA}u_0+\int_{0}^{t}e^{(t-r)A}F(u(r))dr+\int_{0}^{t}e^{(t-r)A}dW^Q(r).
\end{equation}
Moreover, $u$ admits a version in $\mathcal{C}([0,T],\HH^\s)$ such that,
denoting by $\Theta$ the It\^o map, we have
\begin{equation}\label{eq:Ito_inf}
u=\Theta(W^{A,Q})\;.
\end{equation}
\end{propo}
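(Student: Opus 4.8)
The plan is to prove well-posedness of \eqref{mildsto} and the identity \eqref{eq:Ito_inf} in one stroke by reducing the SPDE to the deterministic fixed-point problem already solved in Proposition~\ref{prop:eq_deter+w}. First I would fix a version of the stochastic convolution $W^{A,Q}$ with continuous paths in $\HH^\s$, which exists by the preceding proposition since $\s < \s_Q$. The key observation is that, path by path, $\omega \mapsto W^{A,Q}(\cdot,\omega)$ is an element of $\mathcal{C}_{\s,T}$, so we may apply the It\^o map $\Theta$ to it. Define $u := \Theta(W^{A,Q})$; by Proposition~\ref{prop:eq_deter+w} (with $w = W^{A,Q}$ and initial condition $u_0 \in \HH^\s$, using $\s_F \le \s < 1$), $u$ is the unique element of $\mathcal{C}_{\s,T}$ satisfying
\begin{equation*}
u(t) = e^{tA}u_0 + \int_0^t e^{(t-r)A}F(u(r))\,dr + W^{A,Q}(t).
\end{equation*}
Since $W^{A,Q}(t) = \int_0^t e^{(t-r)A}\,dW^Q(r)$ by definition, this is precisely the mild formulation \eqref{mildsto}. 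Moreover $u$ is $(\mathcal{F}_t)$-adapted because $W^{A,Q}$ is (the It\^o map is built from Picard iterates that are continuous, hence Borel-measurable, functions of $w$, so $\omega \mapsto u(t,\omega)$ is $\mathcal{F}_t$-measurable), and $u$ has continuous paths in $\HH^\s$ by construction.

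Next I would argue uniqueness. Suppose $\tilde u$ is another $\HH^\s$-valued process with continuous paths satisfying \eqref{mildsto} for all $t \in [0,T]$ almost surely. Then, on the almost sure event where both $\tilde u(\cdot,\omega)$ and $W^{A,Q}(\cdot,\omega)$ lie in $\mathcal{C}_{\s,T}$ and the mild equation holds for all $t$, the function $\tilde u(\cdot,\omega)$ satisfies \eqref{eq:Y^w} with $w = W^{A,Q}(\cdot,\omega)$; by the uniqueness clause of Proposition~\ref{prop:eq_deter+w} it must equal $\U^{W^{A,Q}(\cdot,\omega)} = u(\cdot,\omega)$. Hence $\tilde u = u$ almost surely as processes, which also shows that any mild solution admits the continuous version $\Theta(W^{A,Q})$, giving \eqref{eq:Ito_inf}.

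The main obstacle — and really the only subtle point — is the measurability/adaptedness claim: one must check that the pathwise construction $\omega \mapsto \Theta(W^{A,Q}(\cdot,\omega))$ genuinely produces a stochastic process adapted to $(\mathcal{F}_t)$, rather than merely a random element of $\mathcal{C}_{\s,T}$. This follows from the continuity of $\Theta$ on $\mathcal{C}_{\s,T}$ (Theorem~\ref{theo:Itomap}) together with the fact that $\Theta$ restricted to trajectories up to time $t$ depends only on the restriction of $w$ to $[0,t]$ — a locality property visible directly from the Picard iteration defining $\U^w$, since $e^{(t-r)A}$ is only integrated over $r \in [0,t]$. Combined with adaptedness of $W^{A,Q}$, this yields $\mathcal{F}_t$-measurability of $u(t)$. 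Everything else is a direct citation of Proposition~\ref{prop:eq_deter+w} and the properties of the stochastic convolution recalled above.
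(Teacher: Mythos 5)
Your proposal is correct and follows essentially the same route as the paper: take the continuous $\HH^\s$-valued version of $W^{A,Q}$ (guaranteed by admissibility of $\s$ and the Da Prato--Zabczyk factorization/regularity result), define $u=\Theta(W^{A,Q})$ so that \eqref{mildsto} is just \eqref{eq:Y^w} with $w=W^{A,Q}$, and deduce uniqueness from the pathwise uniqueness of the deterministic fixed-point problem (i.e.\ the Picard iteration). Your additional remarks on adaptedness via the locality of the It\^o map are a sound elaboration of a point the paper leaves implicit.
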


\begin{proof}
Once we know that $W^{A,Q}$ admits a version in $\mathcal{C}([0,T],\HH^\s)$,
we can define $X$ by \eqref{eq:Ito_inf} and verify that it solves \eqref{mildsto}.
Uniqueness of the mild solution (modulo indistinguishability of stochastic processes)
follows from a simple Picard iteration argument.
The existence of a version of $W^{A,Q}$ in $\mathcal{C}([0,T],\HH^\s)$, with $\E\|W^{A,Q}\|_{\infty,\s,T}<+\infty$, follows from \cite[Theorem~5.9]{DPZ} and the admissibility of $\s$. 
\end{proof}

\section{Spectral Galerkin Discretization of parabolic SPDEs}\label{sect:Galerkin_parabolic}

In this section we introduce the spectral Galerkin approximation of the 
SPDE~\eqref{eq:SPDE} and we study strong and weak error estimates.
Section~\ref{sect:not_Galerkin} defines the discretization scheme,
Section~\ref{sect:strong_parabolic} contains the strong convergence result,
and Section~\ref{sect:weak_parabolic} the weak convergence result.
We discuss the two results in Section~\ref{sect:comments}.

\subsection{Definition of the discretization scheme}\label{sect:not_Galerkin}

We approximate the solution $u$ of the SPDE \eqref{eq:SPDE} by a projection onto the 
finite dimensional subspace $H_{N} \subset H$
spanned by $\{e_1,\ldots,e_N\}$,
with the $e_i$ as in Assumption~\ref{ass:A}.
To this end define $P_N\in\mathcal{L}(H)$ as the orthogonal projection from $H$ onto $H_N$, where $\mathcal{L}(H)$ is the space of bounded linear operators from $H$ to $H$. In the sequel, the identity mapping on $H$ is denoted by $I\in\mathcal{L}(H)$.
Set also $H_{N}^{\perp}=\text{span}\left\{e_n; n\geq N+1\right\}$, 
and $P_{N}^{\perp}=I-P_N$ the associated orthogonal projection. 
For any $\s\in\R$, $H_N$ is a subspace of $\HH^\s$ and we can view $P_N$ as an element
of $\mathcal{L}(\HH^\s)$, which is still an orthogonal projection operator.

Given $N\in\N^*$, the process $u_N$ with values in $H_N$, 
is defined as the unique mild solution of the SPDE
\begin{equation}\label{eq:SPDE_projN}
du_N(t)=Au_N(t)dt+F_N\left(u_N(t)\right)dt+P_NdW^Q(t), \; u_N(0)=P_Nu_0.
\end{equation}
where $F_N=P_N\circ F:\HH^{s_F}\rightarrow\HH^{-s_F}$ satisfies Assumption~\ref{ass:F}, with bounds on $F_N$ and its derivatives which are uniform with respect to $N$. Note that $u_N$ satisfies the identity (mild formulation of~\eqref{eq:SPDE_projN})
\begin{align*}
u_N(t)&= P_Ne^{tA}u_0+P_N\int_{0}^{t}e^{(t-r)A}F(u_N(r))dr+P_NW^{A,Q}(t)\;.
\end{align*}

As a consequence, we make the following simple observation, which is crucial to obtain the strong and weak error estimates in $\mathcal{C}_{\s,T}$.
\begin{theo}\label{theo:Ito_N}
Let $R_N$ be given by
\begin{equation}\label{eq:R_N}
R_N(t)=(P_N-I)e^{tA}u_0+\int_{0}^{t}(P_N-I)e^{(t-r)A}F(u_N(r))dr.
\end{equation}
Then, one has the identity
\begin{equation}\label{eq:Ito_N}
u_N=\Theta(P_NW^{A,Q}+R_N)\;.
\end{equation}
\end{theo}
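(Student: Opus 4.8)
The plan is to verify directly that $u_N$ solves the fixed-point equation \eqref{eq:Y^w} which defines the It\^o map $\Theta$ applied to the perturbation $w = P_N W^{A,Q} + R_N$, and then to invoke the uniqueness part of Proposition~\ref{prop:eq_deter+w}. Recall that $\Theta(w)$ is by definition the unique element $\U^w \in \mathcal{C}_{\s,T}$ satisfying $\U^w(t) = e^{tA}u_0 + \int_0^t e^{(t-r)A}F(\U^w(r))\,dr + w(t)$. So it suffices to show that $u_N$ lies in $\mathcal{C}_{\s,T}$ (which is clear, since $u_N$ takes values in the finite-dimensional space $H_N \subset \HH^\s$ and is continuous) and that it satisfies this identity with $w = P_N W^{A,Q} + R_N$.

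First I would start from the mild formulation of \eqref{eq:SPDE_projN} already recorded in the excerpt, namely
\begin{equ}
u_N(t) = P_N e^{tA}u_0 + P_N\int_0^t e^{(t-r)A}F(u_N(r))\,dr + P_N W^{A,Q}(t)\;.
\end{equ}
The goal is to massage the right-hand side into the form $e^{tA}u_0 + \int_0^t e^{(t-r)A}F(u_N(r))\,dr + (\text{something})$, reading off that ``something'' and checking it equals $P_N W^{A,Q}(t) + R_N(t)$. Concretely, I would add and subtract the ``full'' (non-projected) terms: write $P_N e^{tA}u_0 = e^{tA}u_0 + (P_N - I)e^{tA}u_0$ and $P_N\int_0^t e^{(t-r)A}F(u_N(r))\,dr = \int_0^t e^{(t-r)A}F(u_N(r))\,dr + \int_0^t (P_N-I)e^{(t-r)A}F(u_N(r))\,dr$. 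Collecting the two correction terms gives exactly $R_N(t)$ as defined in \eqref{eq:R_N}, so
\begin{equ}
u_N(t) = e^{tA}u_0 + \int_0^t e^{(t-r)A}F(u_N(r))\,dr + \bigl(P_N W^{A,Q}(t) + R_N(t)\bigr)\;.
\end{equ}
This is precisely \eqref{eq:Y^w} with $w = P_N W^{A,Q} + R_N$, hence $u_N = \Theta(P_N W^{A,Q} + R_N)$ by uniqueness.

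The only genuine points requiring care — none of them hard — are bookkeeping rather than analysis: one must check that $w = P_N W^{A,Q} + R_N$ genuinely belongs to $\mathcal{C}_{\s,T}$ so that Proposition~\ref{prop:eq_deter+w} applies. For the stochastic convolution part this follows from $W^{A,Q} \in \mathcal{C}([0,T],\HH^\s)$ (Proposition~\ref{propo:Link} and admissibility of $\s$) together with boundedness of $P_N$ on $\HH^\s$; for $R_N$, continuity in $t$ with values in $\HH^\s$ follows from the regularization estimates of Proposition~\ref{prop:regul} applied to $e^{tA}u_0$ and to the convolution integral, exactly as in the proof of Proposition~\ref{prop:eq_deter}. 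One should also note that commuting $P_N$ with $e^{(t-r)A}$ inside the integral is legitimate (both are diagonal in the basis $(e_k)$), which is what makes the term $P_N\int_0^t e^{(t-r)A}F(u_N(r))\,dr$ rearrange cleanly. I do not anticipate any real obstacle: the statement is essentially an algebraic identity following from the definition of the It\^o map plus its uniqueness, and the main work has already been done in setting up Proposition~\ref{prop:eq_deter+w}.
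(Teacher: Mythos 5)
Your proof is correct and follows essentially the same route as the paper, which presents the theorem as an immediate consequence of the mild formulation of \eqref{eq:SPDE_projN}: one writes $P_N = I + (P_N-I)$ in the initial-condition and nonlinearity terms, collects the corrections into $R_N$, and concludes by the uniqueness in Proposition~\ref{prop:eq_deter+w}. The additional bookkeeping you flag (membership of $P_NW^{A,Q}+R_N$ in $\mathcal{C}_{\s,T}$) is sound and does not change the argument.
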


Moreover, one has the following a priori estimate.
\begin{lemme}\label{lem:a_priori_bound_s}
For any $T>0$, any $u_0\in \HH^{\s_F}$, there exists $C(T,|u_0|_{\s_F})\in(0,+\infty)$ such that
\begin{equation}\label{eq:a_priori_bound_s}
\sup_{N\in\N^*}\E \big\|u_N\big\|_{\infty,\s_F,T}\leq C(T,|u_0|_{\s_F}).
\end{equation}
\end{lemme}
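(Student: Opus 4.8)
The plan is to work directly with the mild formulation of $u_N$ and apply a Gronwall-type argument in the space $\mathcal{C}_{\s_F,T}$. First I would write
\begin{equ}
u_N(t) = P_N e^{tA}u_0 + \int_0^t P_N e^{(t-r)A} F(u_N(r))\,dr + P_N W^{A,Q}(t)\;,
\end{equ}
and estimate the three terms in $|\cdot|_{\s_F}$ separately. For the first term, since $P_N$ is an orthogonal projection on $\HH^{\s_F}$ and $(e^{tA})_{t\ge 0}$ is a contraction semigroup on each $\HH^\s$, one has $|P_N e^{tA}u_0|_{\s_F}\le |u_0|_{\s_F}$ uniformly in $N$ and $t\in[0,T]$. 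The third term contributes $\E\|P_N W^{A,Q}\|_{\infty,\s_F,T}\le \E\|W^{A,Q}\|_{\infty,\s_F,T}=:C_1(T)<+\infty$, which is finite and $N$-independent since $\s_F$ is admissible (here using the existence of a version in $\mathcal{C}([0,T],\HH^{\s_F})$ with finite expected supremum norm, as established in the proof of Proposition~\ref{propo:Link}).

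The second (nonlinear) term is where the structure of the argument lies. Using Assumption~\ref{ass:F}, $F$ maps $\HH^{\s_F}$ into $\HH^{-\s_F}$ and is globally Lipschitz, hence of linear growth: $|F(v)|_{-\s_F}\le |F(0)|_{-\s_F}+L|v|_{\s_F}$. Applying Proposition~\ref{prop:regul}(1) with $\s_1=-\s_F$, $\s_2=\s_F$ gives
\begin{equ}
\Bigl|\int_0^t P_N e^{(t-r)A}F(u_N(r))\,dr\Bigr|_{\s_F}
\le C_{-\s_F,\s_F}\int_0^t (t-r)^{-\s_F}\bigl(|F(0)|_{-\s_F}+L\,|u_N(r)|_{\s_F}\bigr)\,dr\;,
\end{equ}
where the singularity $(t-r)^{-\s_F}$ is integrable because $\s_F\in[0,1)$. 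Setting $\phi_N(t)=\E\sup_{0\le s\le t}|u_N(s)|_{\s_F}$, combining the three estimates and taking expectations yields
\begin{equ}
\phi_N(t)\le C_2(T,|u_0|_{\s_F}) + C_3\int_0^t (t-r)^{-\s_F}\phi_N(r)\,dr\;,
\end{equ}
with all constants independent of $N$. A singular (weakly singular) Gronwall lemma then gives $\phi_N(T)\le C(T,|u_0|_{\s_F})$ uniformly in $N$, which is exactly \eqref{eq:a_priori_bound_s}.

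The main obstacle, and the one point requiring care rather than routine computation, is the interchange of the supremum over $t$ with the time integral: the bound $\sup_{s\le t}\bigl|\int_0^s P_N e^{(s-r)A}F(u_N(r))\,dr\bigr|_{\s_F}\le C\int_0^t(t-r)^{-\s_F}\sup_{u\le r}|u_N(u)|_{\s_F}\,dr$ must be justified by first pulling the supremum inside (using monotonicity of the kernel bound in $s$ after replacing $(s-r)^{-\s_F}$ by $(t-r)^{-\s_F}$ for $r\le s\le t$ — which is valid since $t\mapsto t^{-\s_F}$ is decreasing) before taking expectations and invoking Fubini. One also needs a priori knowledge that $\phi_N(t)<+\infty$ for each $N$ and $t$ so that the Gronwall argument is not vacuous; this follows from well-posedness of \eqref{eq:SPDE_projN} in $\mathcal{C}([0,T],\HH^{\s_F})$ together with finiteness of $\E\|W^{A,Q}\|_{\infty,\s_F,T}$. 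With these two points in hand the rest is standard.
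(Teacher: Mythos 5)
Your argument is correct and is essentially the paper's own proof made explicit: the paper simply asserts that the constants in the standard Picard/Gronwall well-posedness bound (Lipschitz constant of $F$, $\lambda_1$, $\s_F$, $\s_Q$) are unchanged when $F$, $Q$ are replaced by $P_NF$, $QP_N$, and your decomposition of the mild formulation, the contraction bound $|P_Ne^{tA}u_0|_{\s_F}\le|u_0|_{\s_F}$, the bound $\E\|P_NW^{A,Q}\|_{\infty,\s_F,T}\le\E\|W^{A,Q}\|_{\infty,\s_F,T}$, and the singular Gronwall step are exactly that argument carried out. One small correction: in your justification of the supremum interchange, replacing $(s-r)^{-\s_F}$ by $(t-r)^{-\s_F}$ for $s\le t$ makes the kernel \emph{smaller} (the map is decreasing), so as stated that step goes the wrong way; the inequality you want is nevertheless true and follows from the substitution $\tau=s-r$ together with the monotonicity of $r\mapsto\sup_{u\le r}|u_N(u)|_{\s_F}$, namely $\int_0^s\tau^{-\s_F}G(s-\tau)\,d\tau\le\int_0^t\tau^{-\s_F}G(t-\tau)\,d\tau$ for nondecreasing $G\ge0$.
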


\begin{proof}
Note first that $P_NW^{A,Q} = W^{A,Q_N}$ with $Q_N = Q P_N$.
It then suffices to note that 
the bounds obtained by the Picard iteration \eqref{mildsto} only depend on the 
Lipschitz constant of $F$, on $\lambda_1$, and on the exponents
$\s_F$ and $\s_Q$. All of these quantities can be chosen independent of $N$
when $F$ is replaced by $F_N$ and $Q$ by $Q_N$.
\end{proof}


\subsection{Strong convergence}\label{sect:strong_parabolic}

Our first result is a strong error estimate on trajectories in $\mathcal{C}_{\s,T}$. 
Given $T\in(0,+\infty)$, we give a bound on the expectation of the 
$\|\cdot\|_{\s,T}$-norm of the difference between $u$ and $u_N$.
For completeness we include a detailed proof, even though the main focus
of our work is weak error and hence, in this section, Theorem \ref{theo:weak}; moreover, some bounds obtained during the proof are used later in the proof of the weak error estimates. The stochastic part is controlled thanks to the factorization method (see \cite[p.~128]{DPZ}).

\begin{theo}\label{theo:strong}
Let $T\in(0,+\infty)$, assume that Assumptions \ref{ass:A}, \ref{ass:F} and \ref{ass:B}
hold, and let $u_0\in\HH^{\s_0}$ for $\s_0 \ge \s$ with $\s$ admissible. Then for any $\epsilon\in(0,\s_Q-\s)$, there exists a constant $C_{\epsilon,\s}\in(0,+\infty)$, such that for any $N\in\N^*$
\begin{equation}\label{eq:strong}
\E\|u-u_N\|_{\infty,\s,T}\leq C_{\epsilon,\s}\Bigl(\frac{1}{\lambda_{N+1}^{(\s_0-\s)/2}}|u_0|_{\s_0}+\frac{1}{\lambda_{N+1}^{1-(\s_F+\s+\epsilon)/2}}+\frac{1}{\lambda_{N+1}^{(\s_Q-\s-\epsilon)/2}}\Bigr).
\end{equation}
\end{theo}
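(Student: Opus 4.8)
The plan is to exploit the It\^o map representations $u = \Theta(W^{A,Q})$ from \eqref{eq:Ito_inf} and $u_N = \Theta(P_N W^{A,Q} + R_N)$ from \eqref{eq:Ito_N}, together with the global Lipschitz property of $\Theta$ on $\mathcal{C}_{\s,T}$ (Theorem~\ref{theo:Itomap}). Since $\Theta$ has bounded first derivative, there is a constant $L$ (independent of $N$) with
\begin{equ}
\E\|u-u_N\|_{\infty,\s,T} = \E\|\Theta(W^{A,Q}) - \Theta(P_N W^{A,Q} + R_N)\|_{\infty,\s,T} \le L\, \E\bigl\| (I-P_N)W^{A,Q} - R_N \bigr\|_{\infty,\s,T}\;.
\end{equ}
So the whole problem reduces to bounding the two deterministic/stochastic contributions: the tail of the stochastic convolution, $\E\|(I-P_N)W^{A,Q}\|_{\infty,\s,T}$, and the remainder term $\E\|R_N\|_{\infty,\s,T}$ with $R_N$ as in \eqref{eq:R_N}.

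For the remainder $R_N$ I would split it into the initial-condition part $(P_N-I)e^{tA}u_0$ and the nonlinear part $\int_0^t (P_N-I)e^{(t-r)A}F(u_N(r))\,dr$. For the first part, writing $(P_N-I)e^{tA}u_0 = e^{tA}(P_N-I)u_0$ and using that $|(P_N-I)v|_\s \le \lambda_{N+1}^{-(\s_0-\s)/2}|v|_{\s_0}$ for $v\in\HH^{\s_0}$ (since the discarded eigenvalues are all $\ge \lambda_{N+1}$), combined with the contractivity-type bound $|e^{tA}w|_\s \le C|w|_\s$ from Proposition~\ref{prop:regul}(1), gives the term $\lambda_{N+1}^{-(\s_0-\s)/2}|u_0|_{\s_0}$. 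For the nonlinear part, I would use $F(u_N(r))\in\HH^{-\s_F}$ with the uniform bound from Lemma~\ref{lem:a_priori_bound_s} (more precisely the $L^\infty$ in time bound on $\E\|u_N\|_{\infty,\s_F,T}$, via Assumption~\ref{ass:F} that $F$ is bounded $\HH^{\s_F}\to\HH^{-\s_F}$), together with the smoothing estimate $|(P_N-I)e^{(t-r)A}g|_\s = |e^{(t-r)A}(P_N-I)g|_\s \le C \lambda_{N+1}^{-(1-(\s_F+\s)/2)+\epsilon'}(t-r)^{-(\s+\s_F)/2-\epsilon'}|g|_{-\s_F}$ — here one trades a power of $\lambda_{N+1}$ for integrability of the time singularity near $r=t$, at the cost of an arbitrarily small $\epsilon$. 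Integrating the (integrable, since the exponent stays $<1$) singularity in $r$ over $[0,t]$ and taking the supremum in $t$ yields the term $\lambda_{N+1}^{-(1-(\s_F+\s+\epsilon)/2)}$.

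For the stochastic convolution tail $\E\|(I-P_N)W^{A,Q}\|_{\infty,\s,T}$, I would use the factorization method as advertised after the statement: write $(I-P_N)W^{A,Q}(t) = \frac{\sin(\pi\alpha)}{\pi}\int_0^t (t-r)^{\alpha-1} e^{(t-r)A} Y_\alpha(r)\,dr$ for a suitable $\alpha\in(0,1)$, where $Y_\alpha(r) = \int_0^r (r-\sigma)^{-\alpha} e^{(r-\sigma)A}(I-P_N)\,dW^Q(\sigma)$, then bound the $L^p(\Omega)$-norm of $\|Y_\alpha\|_{L^p((0,T),\HH^{\s+2\delta})}$ using the It\^o isometry, the Hilbert–Schmidt bound on $(r-\sigma)^{-\alpha}e^{(r-\sigma)A}(-A)^{(\s+2\delta)/2}(I-P_N)Q^{1/2}$, and the definition of $\s_Q$; the factor $(I-P_N)$ supplies the decay $\lambda_{N+1}^{-(\s_Q-\s-\epsilon)/2}$. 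The convolution with $(t-r)^{\alpha-1}e^{(t-r)A}$ then transfers this $L^p$-in-time bound into a bound on $\E\sup_{t\le T}|\cdots|_\s$ via a deterministic Hölder/Young argument, provided $\alpha$, $p$ and $\delta$ are chosen so that all the exponents are admissible (this forces $\epsilon>0$). Summing the three contributions gives \eqref{eq:strong}.

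The main obstacle is the bookkeeping of exponents in the factorization argument for the stochastic part: one must simultaneously (i) keep the Hilbert–Schmidt norm of $(r-\sigma)^{-\alpha}e^{(r-\sigma)A}(-A)^{(\s+2\delta)/2}(I-P_N)Q^{1/2}$ finite — which needs $\s+2\delta < \s_Q$ and a time singularity of order $<1$ — (ii) keep $\alpha>1/p$ so that the factorization convolution lands in $\mathcal{C}([0,T],\HH^\s)$ after using Proposition~\ref{prop:regul}(1) on the extra $2\delta$ of smoothing, and (iii) extract exactly the power $\lambda_{N+1}^{-(\s_Q-\s-\epsilon)/2}$ from the projection $I-P_N$, at the cost of an arbitrarily small loss $\epsilon$. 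The nonlinear remainder term is comparatively routine, but still requires care that the time-integral singularity exponent $(\s+\s_F)/2+\epsilon'$ remains strictly below $1$, which is exactly where the admissibility $\s<1$ and $\s_F<1$ are used.
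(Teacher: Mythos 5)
Your proposal is correct and follows essentially the same route as the paper: reduce via the Lipschitz property of the It\^o map to bounding $\E\|R_N\|_{\infty,\s,T}$ and the tail of the stochastic convolution, handle $R_N$ by splitting off the initial condition and trading powers of $\lambda_{N+1}$ against an integrable time singularity (using Lemma~\ref{lem:a_priori_bound_s}), and treat $(I-P_N)W^{A,Q}$ by the factorization method with the projection commuting past the semigroup to extract $\lambda_{N+1}^{-(\s_Q-\s-\epsilon)/2}$. The only cosmetic difference is that the paper applies $P_N^\perp$ to an $N$-independent auxiliary process $Z^{A,Q}$ and invokes Fernique's theorem for the $p$-th moments, while you place $I-P_N$ inside the stochastic integral and argue via the It\^o isometry; these are equivalent here, and note that Assumption~\ref{ass:F} gives Lipschitz continuity (hence linear growth) of $F$ rather than boundedness, which is why the a priori bound on $\E\|u_N\|_{\infty,\s_F,T}$ is needed.
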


\begin{proof}
To simplify the notation, $C$ denotes a real number $(0,+\infty)$ which does not depend on $N$, on $x_0$ and on $F$. It may vary from line to line.
Since $\Theta$ is Lipschitz thanks to Theorem~\ref{theo:Itomap}, using \eqref{eq:Ito_inf} and \eqref{eq:Ito_N}, only the quantities $\E\|R_N\|_{\infty,\s,T}$ and $\E\|W^{A,Q}-P_NW^{A,Q}\|_{\infty,\s,T}^{2}$ need to be controlled.
First, for any $t\in[0,T]$,
\begin{eqnarray*}
|R_N(t)|_{\s}&\leq& |P_N^\perp e^{tA}u_0|_{\s}+\int_{0}^{t}\big|P_N^\perp e^{(t-r)A}F(u_N(r))\big|_{\s}dr\\ 
&\leq& |(-A)^{\s/2}P_N^\perp e^{tA}(-A)^{-\s_0/2}|_{\mathcal{L}(H)}|u_0|_{\s_0}\\
&~&+ \int_{0}^{t}|(-A)^{\s/2}P_N^\perp e^{(t-r)A}(-A)^{\s_F/2}|_{\mathcal{L}(H)}|F(u_N(r))|_{-\s_F}dr\\
&\leq& C\lambda_{N+1}^{(\s-\s_0)/2}|u_0|_{\s_0}\\
&~&+C\bigl(1+\|u_N\|_{\infty,\s,T}\bigr)(-A)^{-1+\epsilon/2+\s/2+\s_F/2}P_N^\perp|_{\mathcal{L}(H)}\int_{0}^{t}\frac{C}{(t-r)^{1-\epsilon/2}}dr,
\end{eqnarray*}
thanks to the Lipschitz continuity of $F:\HH^{\s_F}\to\HH^{-\s_F}$ (Assumption~\ref{ass:F}).

Using the a priori bound \eqref{eq:a_priori_bound_s} of Lemma~\ref{lem:a_priori_bound_s}, we thus obtain 
\begin{equation}\label{eq:proof_strong_deter}
\E\|R_N\|_{\infty,\s,T}\leq C\lambda_{N+1}^{-(\s_0-\s)/2}|u_0|_{\s_0}+C\lambda_{N+1}^{-1+(\s_F+\s+\epsilon)/2}.
\end{equation}
It remains to deal with the contribution of the discretization of the stochastic convolution. As in \cite{DPZ}, we
can write $W^{A,Q}-P_NW^{A,Q}=\Gamma\bigl(P_N^\perp Z^{A,Q}\bigr)$, where 
\begin{equation}\label{eq:def_Gamma}
\Gamma(z)(t)=\int_{0}^{t}(t-r)^{\epsilon-1}e^{(t-r)A}z(r)dr\;,
\end{equation}
$Z^{A,Q}$ is the auxiliary process given by
\begin{equation}\label{eq:aux_Z}
Z^{A,Q}(t)=\int_{0}^{t}(t-r)^{-\epsilon}e^{(t-r)A}dW^Q(r)\;,
\end{equation}
and $\epsilon \in (0,1)$.
Since, as in \cite[p.~128]{DPZ}, $\Gamma$ maps 
$L^p([0,T],\HH^{\s})\rightarrow \mathcal{C}([0,T],\HH^{\s})$ for $p>1$ sufficiently 
large (depending on $\epsilon$), we have
\begin{equation}\label{eq:proof_strong_sto}
\begin{aligned}
\E\|W^{A,Q}&-P_NW^{A,Q}\|_{\infty,\s,T}^{p} \leq C_{p,T}\left(\int_{0}^{T} \E |P_N^\perp Z^{A,Q}(r)|_{\s}^{p} dr \right)\\
&\leq C_{p,T}|P_N^\perp (-A)^{(\s-\s_Q+\epsilon)/2}|_{\mathcal{L}(H)}^{p}\left(\int_{0}^{T}\E|Z^{A,Q}(r)|_{\s_Q-\epsilon}^{p} dr \right)\\
&\leq C\lambda_{N+1}^{-\frac{p(\s_Q-\s-\epsilon)}{2}},
\end{aligned}
\end{equation}
by Fernique's theorem, since $\sup_{0\leq t\leq T}\E|Z^{A,Q}(t)|_{\s_Q-\epsilon}^{2}<+\infty$ as soon as $\epsilon\in(0,\s_Q)$. 
\end{proof}

\subsection{Weak convergence}\label{sect:weak_parabolic}

In this section, we state and prove our main result, Theorem~\ref{theo:weak}, which is a weak 
error estimate in the space of trajectories $\mathcal{C}_{\s,T}$. Considering the contribution of the stochastic parts, the weak order of 
convergence is twice the strong order appearing in Theorem~\ref{theo:strong}. 
For this, we need an appropriate set of test functions to define a metric on the set 
of probability distributions on $\mathcal{C}_{\s,T}$, which is the purpose of
the following definition.


\begin{defi}\label{defi:test_funct}
Let $T\in(0,+\infty)$ and $\s$ be an admissible regularity parameter in the sense of Definition~\ref{defi:admiss}. A function $\Phi:\mathcal{C}_{\s,T}\rightarrow \R$ is called an \emph{admissible test function} if it is bounded and of class $\mathcal{C}^2$, with bounded Fr\'echet derivatives of first and second order, where the metric on $\mathcal{C}_{\s,T}$ is induced by the norm $\|\cdot\|_{\infty,\s,T}$. 
\end{defi}

This class is somewhat restrictive since we require $\mathcal{C}^2$ regularity in order to 
be able to perform a second-order Taylor expansion of the error. However, some interesting 
observables depending on the whole trajectory and falling in this class are 
now given: 

\begin{ex}\label{ex:examples_admiss_test_funct}
Let $\phi:\HH^\s\rightarrow \R$ be bounded and of class $\mathcal{C}^2$ with bounded first and second order derivatives. Then for any $t\in[0,T]$,
$$\Phi_t: \U\in\mathcal{C}_{\s,T}\mapsto \phi(\U(t))$$
is an admissible test function. Moreover, for any $0\leq t_1<t_2\leq T$,
$$\Phi_{t_1,t_2}: \U\in\mathcal{C}_{\s,T}\mapsto \int_{t_1}^{t_2}\phi(\U(t))dt$$
is another admissible test function. Finally, if $\Psi:\R\rightarrow \R$ is of class $\mathcal{C}^2$, with bounded first and second order derivatives, the mapping $\Psi\circ \Phi_{t_1,t_2}$ is also an admissible test function.
\end{ex}

The main object of study in this section is the weak error
\begin{equation}\label{eq:e_N_Phi}
e_N(\Phi,\s)=\E[\Phi(u)]-\E[\Phi(u_N)]\;,
\end{equation}
where $\s$ is an admissible regularity parameter, $\Phi$ is an admissible test function from $\mathcal{C}_{\s,T}$ to $\R$, $u$ is the solution of the SPDE \eqref{eq:SPDE} and $u_N$ is the approximation in dimension $N$ given by \eqref{eq:SPDE_projN}. To simplify the notation, we fix the time $T\in(0,+\infty)$ and do not mention the dependence of $e_N(\Phi,\s)$ with respect to this quantity.
Our main result is the following Theorem~\ref{theo:weak};
comments on this theorem and its relation to Theorem~\ref{theo:strong}, 
are given in Section~\ref{sect:comments}.

\begin{theo}\label{theo:weak}
Let $T\in(0,+\infty)$, let Assumptions \ref{ass:A}, \ref{ass:F} and \ref{ass:B} hold,
let $u_0\in\HH^{\s_0}$ for $\s_0 \ge \s$ with $\s$  an admissible parameter,
and let $\Phi:\mathcal{C}_{\s,T}\rightarrow \R$  be an admissible test function.
Then for any $\epsilon\in(0,\s_Q-\s)$, there exists a constant $C_{\epsilon,\s}(\Phi)\in(0,+\infty)$, such that for any $N\in\N^*$
\begin{equation}\label{eq:weak}
\big|e_N(\Phi,\s)\big|\leq C_{\epsilon,\s}(\Phi)\Bigl(\frac{1}{\lambda_{N+1}^{(\s_0-\s)/2}}|u_0|_{\s_0}+\frac{1}{\lambda_{N+1}^{1-(\s_F+\s+\epsilon)/2}}+\frac{1}{\lambda_{N+1}^{\s_Q-\s-\epsilon}}\Bigr).
\end{equation}
\end{theo}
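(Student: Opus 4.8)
The plan is to exploit the It\^o map representations \eqref{eq:Ito_inf} and \eqref{eq:Ito_N}, namely $u=\Theta(W^{A,Q})$ and $u_N=\Theta(P_NW^{A,Q}+R_N)$, together with the fact that $\Theta$ is of class $\mathcal{C}^2$ with bounded derivatives (Theorem~\ref{theo:Itomap}). Since $\Phi$ is an admissible test function, the composition $\Psi:=\Phi\circ\Theta:\mathcal{C}_{\s,T}\to\R$ is again bounded and of class $\mathcal{C}^2$ with bounded first and second derivatives. Writing $\xi_N=W^{A,Q}$ and $\delta_N=P_NW^{A,Q}-W^{A,Q}+R_N=-P_N^\perp W^{A,Q}+R_N$, we have $e_N(\Phi,\s)=\E[\Psi(\xi_N)]-\E[\Psi(\xi_N+\delta_N)]$, and a second-order Taylor expansion of $\Psi$ gives
\begin{equ}
e_N(\Phi,\s)=-\E\bigl[D\Psi(\xi_N)\cdot\delta_N\bigr]-\E\Bigl[\int_0^1(1-\theta)\,D^2\Psi(\xi_N+\theta\delta_N)\cdot(\delta_N,\delta_N)\,d\theta\Bigr].
\end{equ}
The second term is bounded by $C\,\E\|\delta_N\|_{\infty,\s,T}^2$ since $D^2\Psi$ is bounded; using $\|\delta_N\|_{\infty,\s,T}^2\le 2\|P_N^\perp W^{A,Q}\|_{\infty,\s,T}^2+2\|R_N\|_{\infty,\s,T}^2$ and the estimates \eqref{eq:proof_strong_deter}--\eqref{eq:proof_strong_sto} already obtained in the proof of Theorem~\ref{theo:strong} (with the same $\epsilon$), this contributes the terms $\lambda_{N+1}^{-(\s_0-\s)}|u_0|_{\s_0}^2$, $\lambda_{N+1}^{-2+(\s_F+\s+\epsilon)}$, and crucially $\lambda_{N+1}^{-(\s_Q-\s-\epsilon)}$, i.e.\ precisely twice the strong rate for the stochastic part. (One should be slightly careful: the strong estimate for $R_N$ is only $L^1$, so to control the cross-terms in $\|\delta_N\|^2$ one uses the $L^p$ bound \eqref{eq:proof_strong_sto} for the stochastic part and re-runs the $R_N$ estimate in $L^2$, which is legitimate since the Picard bounds and the a priori estimate of Lemma~\ref{lem:a_priori_bound_s} extend to any moment.)

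The heart of the argument, and the step I expect to be the main obstacle, is showing that the first-order term $\E[D\Psi(\xi_N)\cdot\delta_N]$ is itself of the smaller (squared) order rather than the strong order. I would split $\delta_N=-P_N^\perp W^{A,Q}+R_N$. The contribution of $R_N$ is handled directly: $|\E[D\Psi(\xi_N)\cdot R_N]|\le C\,\E\|R_N\|_{\infty,\s,T}$, which by \eqref{eq:proof_strong_deter} already gives the deterministic terms $\lambda_{N+1}^{-(\s_0-\s)/2}|u_0|_{\s_0}$ and $\lambda_{N+1}^{-1+(\s_F+\s+\epsilon)/2}$ appearing in \eqref{eq:weak} (these are genuinely first-order, which is why they appear with the strong exponent, not its double — this is consistent with the statement). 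The delicate part is the term $-\E[D\Psi(W^{A,Q})\cdot(-P_N^\perp W^{A,Q})]=\E[D\Psi(W^{A,Q})\cdot P_N^\perp W^{A,Q}]$. Here I would invoke the independence: because $Q$ commutes with $A$, the operator $P_N$ commutes with the semigroup and with the noise, so $P_N W^{A,Q}$ and $P_N^\perp W^{A,Q}$ are built from disjoint families of the independent Brownian motions $(\beta_k)$, hence are independent Gaussian processes; moreover $W^{A,Q}=P_NW^{A,Q}+P_N^\perp W^{A,Q}$. The issue is that $D\Psi(W^{A,Q})$ depends on the \emph{full} $W^{A,Q}$, not just on $P_NW^{A,Q}$, so I cannot immediately condition. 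Instead I would write $D\Psi(W^{A,Q})\cdot P_N^\perp W^{A,Q}=D\Psi(P_NW^{A,Q})\cdot P_N^\perp W^{A,Q}+\bigl(D\Psi(W^{A,Q})-D\Psi(P_NW^{A,Q})\bigr)\cdot P_N^\perp W^{A,Q}$. In the first summand, $D\Psi(P_NW^{A,Q})$ is a function of $P_NW^{A,Q}$ alone, hence independent of $P_N^\perp W^{A,Q}$, and since the latter is centred, $\E[D\Psi(P_NW^{A,Q})\cdot P_N^\perp W^{A,Q}]=0$ (using $\E\|P_N^\perp W^{A,Q}\|_{\infty,\s,T}<\infty$ and boundedness of $D\Psi$ to justify the conditional expectation computation). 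The second summand is bounded by $C\,\E\bigl[\|D\Psi(W^{A,Q})-D\Psi(P_NW^{A,Q})\|\cdot\|P_N^\perp W^{A,Q}\|_{\infty,\s,T}\bigr]\le C\,\E\bigl[\|W^{A,Q}-P_NW^{A,Q}\|_{\infty,\s,T}\|P_N^\perp W^{A,Q}\|_{\infty,\s,T}\bigr]\le C\,\E\|P_N^\perp W^{A,Q}\|_{\infty,\s,T}^2$ using that $D^2\Psi$ is bounded (so $D\Psi$ is Lipschitz), and this is again of order $\lambda_{N+1}^{-(\s_Q-\s-\epsilon)}$ by \eqref{eq:proof_strong_sto} with $p=2$.

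Assembling the pieces: the $R_N$ contributions give the first two terms of \eqref{eq:weak} at first order; the $P_N^\perp W^{A,Q}$ contributions — both the leftover first-order piece after subtracting the vanishing conditionally-centred term, and the quadratic remainder — give the third term $\lambda_{N+1}^{-(\s_Q-\s-\epsilon)}$, which is exactly the square of the stochastic part of the strong rate in \eqref{eq:strong}. There are also cross terms between $R_N$ and $P_N^\perp W^{A,Q}$ inside the quadratic remainder; by Cauchy--Schwarz these are bounded by the product of the $L^2$ norms, hence dominated by the sum of squares already accounted for. This yields \eqref{eq:weak} with $C_{\epsilon,\s}(\Phi)$ depending on $\|\Phi\|_{\mathcal{C}^2_b}$, on the bounds for $D\Theta,D^2\Theta$, on $T$, $\epsilon$, $\s$, the Lipschitz constant of $F$, and $\lambda_1$. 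The main subtlety to get right is the independence/conditioning argument for the first-order stochastic term, and the bookkeeping ensuring every remainder is genuinely $O\bigl(\lambda_{N+1}^{-(\s_Q-\s-\epsilon)}\bigr)$ rather than merely $O\bigl(\lambda_{N+1}^{-(\s_Q-\s-\epsilon)/2}\bigr)$.
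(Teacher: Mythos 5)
Your proof is correct and rests on exactly the same ingredients as the paper's: the It\^o map representations $u=\Theta(W^{A,Q})$ and $u_N=\Theta(P_NW^{A,Q}+R_N)$, the $\mathcal{C}^2$ regularity of $\Psi=\Phi\circ\Theta$, a second-order Taylor expansion, and the vanishing of the first-order stochastic term via the independence of $P_NW^{A,Q}$ and $P_N^\perp W^{A,Q}$ together with the centredness of the latter. The only difference is organisational: you expand $\Psi$ once, around the full $W^{A,Q}$ and with the combined increment $\delta_N=-P_N^\perp W^{A,Q}+R_N$, whereas the paper first peels off the $R_N$ contribution by writing $e_N=\bigl[\E\Psi(W^{A,Q})-\E\Psi(P_NW^{A,Q})\bigr]+\bigl[\E\Psi(P_NW^{A,Q})-\E\Psi(P_NW^{A,Q}+R_N)\bigr]$, bounds the second bracket by Lipschitz continuity of $\Psi$ together with \eqref{eq:proof_strong_deter}, and Taylor-expands only the first bracket around the base point $P_NW^{A,Q}$. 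That choice of base point makes the first-order term $\E\,D\Psi(P_NW^{A,Q}).\bigl[P_N^\perp W^{A,Q}\bigr]$ vanish exactly, and $R_N$ is only ever needed in $L^1$. Your version therefore pays for two extra (but harmless) steps that the paper's split avoids: the Lipschitz correction $D\Psi(W^{A,Q})-D\Psi(P_NW^{A,Q})$ paired with $P_N^\perp W^{A,Q}$, which you correctly show is of the squared order $\E\|P_N^\perp W^{A,Q}\|_{\infty,\s,T}^2$, and an $L^2$ rather than $L^1$ bound on $\|R_N\|_{\infty,\s,T}$, which does require extending Lemma~\ref{lem:a_priori_bound_s} to second moments as you note (and produces $|u_0|_{\s_0}^2$ in one remainder term, absorbed since that term is dominated by the first-order $R_N$ contribution up to a constant involving $\lambda_1$). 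Both routes yield \eqref{eq:weak}.
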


\begin{proof}
Thanks to the definition of the It\^o map $\Theta$ and Proposition \ref{propo:Link}, we have $u=\Theta(W^{A,Q})$; moreover $u_N=\Theta(P_NW^{A,Q}+R_N)$, for any $N\in \N^*$, by Theorem \ref{theo:Ito_N}.
Therefore, setting $\Psi = \Phi \circ \Theta$ for the It\^o map $\Theta$,
the weak error \eqref{eq:e_N_Phi} can be rewritten as
\begin{align*}
e_N(\Phi,\s)&=\E[\Phi(u)]-\E[\Phi(u_N)]\\
&=\E[\Phi\circ\Theta(W^{A,Q})]-\E[\Phi\circ\Theta(P_NW^{A,Q}+R_N)]\\
&=\E[\Psi(W^{A,Q})]-\E[\Psi(P_NW^{A,Q})]\\
&~+\E[\Psi(P_NW^{A,Q})]-\E[\Psi(P_NW^{A,Q}+R_N)].
\end{align*}
Thanks to Theorem \ref{theo:Itomap}, the map $\Psi:\mathcal{C}_{\s,T}\rightarrow \R$ is 
again an admissible test function (in the sense of Definition \ref{defi:test_funct}); in particular, it is Lipschitz continuous, and for any $N\in\N^*$
\begin{equs}
\Big|\E[\Psi&(P_NW^{A,Q})]-\E[\Psi(P_NW^{A,Q}+R_N)]\Big| \leq C_{\s}^{1}(\Phi)\E\|R_N\|_{\infty,\s,T}\\
&\leq \frac{C_{\s}^{1}(\Phi)}{\lambda_{N+1}^{(\s_0-\s)/2}}|u_0|_{\s_0}+\frac{C_{\s}^{1}(\Phi)}{\lambda_{N+1}^{1-(\s_F+\s+\epsilon)/2}},
\end{equs}
for some $C_{\s}^{1}(\Phi)\in(0,+\infty)$, thanks to 
the component of the strong error derived as \eqref{eq:proof_strong_deter}.


It remains to study the part of the error due to the discretization of the stochastic convolution $W^{A,Q}$.
The test function $\Psi$ being admissible, it is of class $\mathcal{C}^2$ with bounded first and second order derivatives, so that
\begin{equs}
|\Psi(&W^{A,Q})-\Psi(P_NW^{A,Q})- D\Psi(P_NW^{A,Q}).\bigl[P_N^\perp W^{A,Q}\bigr]|\\
&\le C \|P_N^\perp W^{A,Q}\|_{\infty,\s,T}^{2}\;, \label{e:boundweak}
\end{equs}
for some constant $C$.
The expectation of this term is easily controlled by using the strong error estimate proved above: for any $\epsilon\in(0,\s_Q-\s)$, there exists $C$ such that for any $N\in\N^*$
\begin{equ}[e:boundstrong]
\E \|P_N^\perp W^{A,Q}\|_{\infty,\s,T}^{2} \leq \frac{C}{\lambda_{N+1}^{\s_Q-\s-\epsilon}}\;,
\end{equ}
thanks to \eqref{eq:proof_strong_sto}.

To control the first order term, the key observation is that that the $\mathcal{C}_{\s,T}$-valued random variables $P_NW^{A,Q}$ and $P_N^\perp W^{A,Q}$ are independent
since the former depends only on $\{\beta_i\,:\, i \le N\}$, while the latter
only depends on $\{\beta_i\,:\, i > N\}$. Furthermore, $P_N^\perp W^{A,Q}$ is a centred
Gaussian process, so that
\begin{equ}
\E \bigl( D\Psi(P_NW^{A,Q}).\bigl[P_N^\perp W^{A,Q}\bigr]\bigr) = 0\;.
\end{equ}
Combining this with \eqref{e:boundweak} and \eqref{e:boundstrong}, we obtain the weak error estimate
$$\Big|\E[\Psi(W^{A,Q})]-\E[\Psi(P_NW^{A,Q})]\Big|\leq \frac{C_{\epsilon,\s}(\Phi)}{\lambda_{N+1}^{\s_Q-\s-\epsilon}}\;,$$
thus concluding the proof.
\end{proof}

\subsection{Comments on Theorems~\ref{theo:strong} and~\ref{theo:weak}}\label{sect:comments}

The aim of this section is to compare the strong and weak orders of convergence from Theorems~\ref{theo:strong} and \ref{theo:weak}. For simplicity of the discussion, we take $\epsilon=0$, even if the results are only valid for $\epsilon>0$. 
Note that $\s=\s_F$ is an admissible parameter, and that it is a natural choice.

When $\s$ increases, the orders of convergence of each term in~\eqref{eq:strong} and~\eqref{eq:weak} decreases; observe that the decrease is slower for the third term in~\eqref{eq:weak} than for the other terms (one finds $\s$ instead of $\s/2$): indeed that terms comes from the second-order term in the Taylor expansion. On the contrary, the higher the spatial regularity of the initial condition $u_0$ (increase of $\s_0$), of the coefficient $F$ (decrease of $\s_F$) and of the covariance operator ($\s_Q$ increases), the higher the rates of convergence.

The two first error terms in~\eqref{eq:strong} and~\eqref{eq:weak} are the same. These terms are due to the discretization of the initial condition $x_0$ and of the coefficient $F$. Indeed, in the weak error estimate we have only used the Lipschitz continuity of the It\^o map to control these contributions to the weak error.

However, the rate of convergence $\s_Q-\s>0$ of the third term in~\eqref{eq:weak} is twice the rate of convergence $(\s_Q-\s)/2$ of the third term in~\eqref{eq:strong}. These terms correspond with the discretization of the stochastic convolution, and this is where the difference between strong and weak orders of convergence appears. Observe that the second term always converges to $0$ faster than the third one: when $\s$ is an admissible parameter, $1-(\s_F+\s)/2\ge 1-\s\ge \s_Q-\s$. To have a similar control on the first term, one needs to assume that $\s_0$ is sufficiently large: $\s_0\ge 2\s_Q-\s$; this type of assumption is natural, since if $\s_0=\s$, then the convergence of $P_Nx_0$ to $x_0$ in $\HH^{\s}$ may be very slow.

As a consequence, if $\s_0\ge 2\s_Q-\s$, we have
$$\E\|u-u_N\|_{\infty,\s,T}\leq \frac{C_{\epsilon,\s}}{\lambda_{N+1}^{(\s_Q-\s-\epsilon)/2}} \;,\; \E[\Phi(u)]-\E[\Phi(u_N)]\leq \frac{C_{\epsilon,\s}(\Phi)}{\lambda_{N+1}^{(\s_Q-\s-\epsilon)}},$$
and thus the weak order of convergence is twice the strong order one.




\section{Error estimates for the damped-driven wave equation}\label{sect:wave}

In this section, we study the spectral Galerkin approximation of the following damped and stochastically driven wave equation, where we keep the notation and assumptions of Section~\ref{sect:Not_Ass}: 
\begin{equation}\label{eq:SPDE_wave}
\begin{cases}
du(t)=v(t)dt\\
dv(t)=-\gamma v(t)dt+Au(t)dt+F(u(t))dt+dW^{Q}(t).
\end{cases}
\end{equation}
We impose the
initial conditions $u(0)=u_0\in H$ and $v(0)=v_0\in \HH^{-1}$. 
The coefficient $\gamma\geq 0$ is a damping parameter.
The linear operators $A$ and $Q$ satisfy Assumptions~\ref{ass:A}~and~\ref{ass:B} respectively; however, we modify the assumption on $F$ as follows, changing
the range of allowable exponent:
\begin{hyp}\label{ass:F_wave}
For some $\s_F\in[0,1/2]$, the map $F:\HH^{\s_F}\rightarrow \HH^{-\s_F}$ is of class $\mathcal{C}^2$, with bounded first and second-order Fr\'echet derivatives.
\end{hyp}

Our aim is to show that the It\^o map technique used to prove strong and weak error estimates in spaces of trajectories $\mathcal{C}_{\s,T}$ in 
Sections~\ref{sect:strong_parabolic}~and~\ref{sect:weak_parabolic} 
can also be applied to this damped-driven wave equation to obtain results 
similar to Theorems~\ref{theo:strong} and~\ref{theo:weak}; our main result is Theorem~\ref{theo:wave}.
In order to be concise, we only sketch the main arguments of the analysis, since they are straightforward generalizations of the ones used in the previous sections in the parabolic case.
The following subsections tackle, in order, the notation employed,
the definition of solution and relation to an It\^o map, the Galerkin
approximation and the strong and weak error estimates.

\subsection{Notation}

Introduce the process $X(t)=\bigl( u(t) , v(t)\bigr)$, with values in $\hat{\HH}=\HH^{0}\times \HH^{-1}$. For $\s\in \R$, set $\hat{\HH}^{\s}=\HH^{\s}\times \HH^{\s-1}$, which is a Hilbert space with the scalar product defined by $\bigl[(u^1,v ^1),(u^2,v^2)\bigr]_{\s}=\langle u^1,u^2\rangle_{\s}+\langle v^1,v^2\rangle_{\s-1}$. The associated norm in $\hat{\HH}^\s$ is denoted by $|\cdot|_{\s}$.
Then the second-order SPDE~\eqref{eq:SPDE_wave} can be rewritten as the following first-order stochastic evolution equation in $\hat{\HH}$:
\begin{equation}\label{eq:SPDE_wave_X}
dX(t)=\mathcal{A}X(t)dt+\mathcal{F}(X(t))dt+d\mathcal{W}^{\mathcal{Q}}(t), \; X(0)=x_0=\bigl(u_0,v_0\bigr),
\end{equation}
where
\begin{equs}[2]
\mathcal{A}x&=(v,Au)\in \hat{\HH^0}  \; &\quad\text{for all}~ x&=\bigl(u,v\bigr)\in \hat{\HH}^{1},\\
\mathcal{F}(x)&=(0,F(u)-\gamma v) \in \hat{\HH}^{ \s_F  } \; &\quad\text{for all}~ x&=\bigl(u,v\bigr)\in \hat{\HH}^{\s_F},\\
\mathcal{Q}(x)&=(0,Qv) \; &\quad\text{for all}~ x&=\bigl(u,v\bigr)\in \hat{\HH}^{0},
\end{equs}
and the stochastic perturbation $\mathcal{W}^{\mathcal{Q}}$ is a $\mathcal{Q}$-Wiener process on $\hat{\HH}^0$. 
The unbounded linear operator $\mathcal{A}$ on $\mathcal{\HH}$ generates a group $\bigl(e^{t\mathcal{A}}\bigr)_{t\in \R}$, where, for all $t\in \R$, and $x=(u,v)\in \hat{\HH}$, 
$e^{t\mathcal{A}}x=\bigl(u^t,v^t\bigr)$ satisfies
\begin{gather*}
u^t=\sum_{k\in\N^*}\Bigl(\cos(t\sqrt{\lambda_k})\langle u,e_k\rangle+\sqrt{\lambda_{k}^{-1}}\sin(t\sqrt{\lambda_k})\langle v,e_k\rangle \Bigr)e_k,\\
v^t=\sum_{k\in\N^*}\Bigl(-\sqrt{\lambda_{k}}\sin(t\sqrt{\lambda_k})
\langle u,e_k\rangle+\langle \cos(t\sqrt{\lambda_k})\langle v,e_k\rangle \Bigr)e_k.
\end{gather*}


\subsection{Mild solutions and the It\^o map}


For any $\s\in\R$ and $T\in(0,+\infty)$, denote by $\hat{\mathcal{C}}_{\s,T}=\mathcal{C}\bigl([0,T],\hat{\HH}^{\s}\bigr)=\mathcal{C}_{\s,T}\times \mathcal{C}_{\s-1,T}$ the space of trajectories for $X$. The norm in $\hat{\mathcal{C}}_{\s,T}$ is still denoted by $\|\cdot\|_{\infty,\s,T}$.
The It\^o map $\hat{\Theta}$  associated with the wave equation \eqref{eq:SPDE_wave} is defined in Proposition~\ref{propo:Ito_wave_1} below (see Propositions~\ref{prop:eq_deter} and \ref{prop:eq_deter+w} in the parabolic case).

\begin{propo}\label{propo:Ito_wave_1}
Let Assumptions \ref{ass:A} and \ref{ass:F_wave} hold, and assume that $\s\in[\s_F,1/2]$.
Let $x_0=(u_0,v_0)\in \hat{\HH}^{\s_0}$ with $\s_0\geq \s$, $T\in(0,+\infty)$. Let $\hat{w}\in\hat{\mathcal{C}}_{\s,T}$. Then:
\begin{itemize}
\item
there exists a unique function $\hat{Y}^{\hat{w}}\in\hat{\mathcal{C}}_{\s,T}$ such that for any $t\geq 0$,
\begin{equation}\label{eq:Y^w_wave}
\hat{Y}^{\hat{w}}(t)=e^{t\mathcal{A}}x_0+\int_{0}^{t}e^{(t-s)\mathcal{A}}\mathcal{F}(\hat{Y}^{\hat{w}}(r)) dr+\hat{w}(t);
\end{equation}

\item
the mapping
$\hat{\Theta}:\hat{\mathcal{C}}_{\s,T} \rightarrow \hat{\mathcal{C}}_{\s,T}$
given by $\hat{w} \mapsto \hat{Y}^{\hat{w}}$
is of class $\mathcal{C}^2$,
with bounded Fr\'echet derivatives of first and second order.
\end{itemize}

\end{propo}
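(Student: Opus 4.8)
The plan is to prove Proposition~\ref{propo:Ito_wave_1} by adapting, essentially verbatim, the two-step scheme already used for the parabolic It\^o map (Propositions~\ref{prop:eq_deter+w} and Theorem~\ref{theo:Itomap}). First I would establish the fixed-point statement \eqref{eq:Y^w_wave}. The crucial preliminary step is to record the mapping properties of the wave group $(e^{t\mathcal{A}})_{t\in\R}$ between the spaces $\hat{\HH}^\s$: from the explicit Fourier formulas for $u^t,v^t$ one reads off that $e^{t\mathcal{A}}$ is a \emph{bounded} (not smoothing) operator on each $\hat{\HH}^\s$, with operator norm bounded uniformly for $t$ in compact intervals, and that $\|e^{t\mathcal{A}}x\|_\s \le C_T|x|_\s$ for $|t|\le T$. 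This is the analogue of Proposition~\ref{prop:regul}, but without the singular factor $t^{-(\s_2-\s_1)/2}$; in particular no parabolic regularization is available, which is exactly why the admissible range of $\s_F$ must shrink to $[0,1/2]$. One also needs that $\mathcal{F}:\hat{\HH}^{\s_F}\to\hat{\HH}^{\s_F}$ is globally Lipschitz with bounded derivatives: since $\mathcal{F}(u,v)=(0,F(u)-\gamma v)$, this follows from Assumption~\ref{ass:F_wave} together with the trivial observation that $(u,v)\mapsto(0,-\gamma v)$ is linear bounded, noting that the second component of $\hat{\HH}^{\s_F}$ is $\HH^{\s_F-1}$ so that $F(u)\in\HH^{-\s_F}\subset\HH^{\s_F-1}$ precisely when $\s_F\le 1/2$.

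Given these two ingredients, existence and uniqueness of $\hat{Y}^{\hat w}$ in $\hat{\mathcal{C}}_{\s,T}$ follows from a standard Picard iteration / Banach fixed-point argument applied to the map $\hat{Y}\mapsto \bigl(t\mapsto e^{t\mathcal{A}}x_0+\int_0^t e^{(t-r)\mathcal{A}}\mathcal{F}(\hat Y(r))\,dr+\hat w(t)\bigr)$ on $\hat{\mathcal{C}}_{\s,T}$. Because $e^{t\mathcal{A}}$ is only bounded, the contraction estimate for the Duhamel term reads $\|\int_0^\cdot e^{(\cdot-r)\mathcal{A}}(\mathcal{F}(\hat Y_1(r))-\mathcal{F}(\hat Y_2(r)))\,dr\|_{\infty,\s,T}\le C_T\,T\,\mathrm{Lip}(\mathcal{F})\,\|\hat Y_1-\hat Y_2\|_{\infty,\s,T}$, which is a contraction for $T$ small; one then iterates over $[0,T]$ by the usual concatenation on successive short intervals (the constants are uniform because the group bound is uniform on compacts and $\mathrm{Lip}(\mathcal{F})$ is global). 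The initial term $e^{t\mathcal{A}}x_0$ lies in $\hat{\mathcal{C}}_{\s,T}$ since $x_0\in\hat{\HH}^{\s_0}\subset\hat{\HH}^{\s}$ and $t\mapsto e^{t\mathcal{A}}x_0$ is continuous into $\hat{\HH}^\s$ (strong continuity of the group).

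For the $\mathcal{C}^2$-regularity of $\hat{\Theta}$ I would invoke the Implicit Function Theorem exactly as in the sketch after Theorem~\ref{theo:Itomap}: write $\hat{Y}^{\hat w}$ as the solution of $G(\hat Y,\hat w)=0$ where $G(\hat Y,\hat w)(t)=\hat Y(t)-e^{t\mathcal{A}}x_0-\int_0^t e^{(t-r)\mathcal{A}}\mathcal{F}(\hat Y(r))\,dr-\hat w(t)$, viewed as a map $\hat{\mathcal{C}}_{\s,T}\times\hat{\mathcal{C}}_{\s,T}\to\hat{\mathcal{C}}_{\s,T}$. The composition operator $\hat Y\mapsto \mathcal{F}\circ \hat Y$ is $\mathcal{C}^2$ from $\hat{\mathcal{C}}_{\s,T}$ to itself with bounded derivatives (this is where $\mathcal{C}^2$-ness and boundedness of $D\mathcal{F}$, $D^2\mathcal{F}$ from Assumption~\ref{ass:F_wave} enter, via the standard fact that Nemytskii-type operators inherit $\mathcal{C}^k$ with bounded derivatives when acting between spaces of continuous functions), and $z\mapsto \bigl(t\mapsto\int_0^t e^{(t-r)\mathcal{A}}z(r)\,dr\bigr)$ is bounded linear, hence $\mathcal{C}^\infty$; therefore $G$ is $\mathcal{C}^2$. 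The partial derivative $D_{\hat Y}G(\hat Y,\hat w)$ is $I-K$ where $K$ is the (small-$T$, then iterated) Volterra-type contraction above, so it is invertible with inverse bounded uniformly; the IFT then gives $\hat{\Theta}\in\mathcal{C}^2$. Boundedness of $D\hat\Theta$ and $D^2\hat\Theta$ globally over $\hat w\in\hat{\mathcal{C}}_{\s,T}$ follows from the explicit formulas $D\hat\Theta(\hat w)=-(D_{\hat Y}G)^{-1}\circ D_{\hat w}G$ etc., together with the global bounds on the derivatives of the composition operator and the uniform bound on $(D_{\hat Y}G)^{-1}$ — here the linearity of the semigroup convolution in $z$ is essential, since the constants never depend on the base point $\hat w$.

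The main obstacle, and the only place where genuine care beyond ``copy the parabolic proof'' is needed, is the loss of smoothing: all the parabolic estimates that absorbed a factor $t^{-(\s_2-\s_1)/2}$ against an integrable singularity are unavailable, so one must verify that $\mathcal{F}$ genuinely maps $\hat{\HH}^{\s}$ into itself (rather than merely into a rougher space) on the restricted range $\s\in[\s_F,1/2]$ — this is precisely the content of the modified Assumption~\ref{ass:F_wave} and is what forces $\s_F\le 1/2$. Once that algebraic bookkeeping on the exponents is done, the remaining contraction and IFT arguments are formally identical to the parabolic case and I would present them only in sketch form, as the statement itself requests.
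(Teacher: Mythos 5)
Your proposal is correct and follows exactly the route the paper intends: the paper gives no proof of Proposition~\ref{propo:Ito_wave_1}, deferring to the parabolic arguments (Picard iteration for \eqref{eq:Y^w_wave}, the Implicit Function Theorem for the $\mathcal{C}^2$ regularity of $\hat{\Theta}$), and you reproduce that scheme while correctly isolating the one genuinely new point, namely that the absence of smoothing for the group $e^{t\mathcal{A}}$ is compensated by the structural fact that $\mathcal{F}$ maps $\hat{\HH}^{\s}$ into itself because $\HH^{-\s_F}\subset\HH^{\s-1}$ for $\s\le 1/2\le 1-\s_F$.
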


The following replaces Definition \ref{defi:admiss}
for the duration of this section; specifically it is used in
the proposition and theorem which follow.

\begin{defi}\label{defi:admiss2}
The parameter $\s \in \R^+$ is an admissible parameter if the following conditions are satisfied: $\s\in[\s_F,1/2]$ and $\s<\s_Q$, where $\s_Q$ is given in Definition~\ref{defi:s_Q}.
\end{defi}

\begin{propo}
Let $T\in(0,+\infty)$ and assume that $x_0=(u_0,v_0)\in \hat{\HH}^{\s_0}$, for some admissible $\s$.
Then \eqref{eq:SPDE_wave_X} admits a unique mild solution $X=(u,v)$, {\it i.e.} an $\hat{\HH}^{0}$-valued process such that, for any $0\leq t\leq T$,
\begin{equation}\label{eq:mild_wave}
X(t)=e^{t\mathcal{A}}x_0+\int_{0}^{t}e^{(t-s)\mathcal{A}}\mathcal{F}(X(r)) dr+\mathcal{W}^{\mathcal{A},\mathcal{Q}}(t),
\end{equation}
with the stochastic convolution $\mathcal{W}^{\mathcal{A},\mathcal{Q}}$ satisfying
\begin{equation}\label{eq:stoc_conv_wave}
\mathcal{W}^{\mathcal{A},\mathcal{Q}}(t)=\int_{0}^{t}e^{(t-r)\mathcal{A}}d\mathcal{W}^{\mathcal{Q}}(r).
\end{equation}
Moreover, $X$ admits a version in $\hat{\mathcal{C}}_{\s,T}$ 
such that we have the It\^o map representation
$$X=\hat{\Theta}\bigl(\mathcal{W}^{\mathcal{A},\mathcal{Q}}\bigr).$$
\end{propo}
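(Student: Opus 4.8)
The plan is to mirror, almost verbatim, the structure of the proof of Proposition~\ref{propo:Link} in the parabolic case, replacing $A$ by $\mathcal{A}$, $Q$ by $\mathcal{Q}$, $e^{tA}$ by the unitary-type group $e^{t\mathcal{A}}$, and $\HH^\s$ by $\hat{\HH}^\s$. The argument splits into three parts: (i) establish that the stochastic convolution $\mathcal{W}^{\mathcal{A},\mathcal{Q}}$ defined by \eqref{eq:stoc_conv_wave} admits a version in $\hat{\mathcal{C}}_{\s,T}$; (ii) once this is known, invoke Proposition~\ref{propo:Ito_wave_1} to define $X = \hat{\Theta}(\mathcal{W}^{\mathcal{A},\mathcal{Q}})$ and check directly from \eqref{eq:Y^w_wave} that this $X$ satisfies the mild formulation \eqref{eq:mild_wave}; (iii) prove uniqueness via a Picard/Gronwall argument.

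For step (i), I would argue as follows. Writing $\mathcal{W}^{\mathcal{Q}} = (0, W^Q)$ and unpacking the action of $e^{t\mathcal{A}}$ on $(0,y)$, the first component of $\mathcal{W}^{\mathcal{A},\mathcal{Q}}(t)$ is $\int_0^t \frac{\sin((t-r)\sqrt{-A})}{\sqrt{-A}}\,dW^Q(r)$ and the second component is $\int_0^t \cos((t-r)\sqrt{-A})\,dW^Q(r)$. Because the group $e^{t\mathcal{A}}$ is bounded on each $\hat{\HH}^\s$ uniformly in $t$ (it is an isometry for $\gamma = 0$ and exponentially bounded for $\gamma \ge 0$ after the lower-order perturbation $-\gamma v$ is absorbed, or one simply works with the pure group generated by $\mathcal{A}$ and treats $-\gamma v$ as part of $\mathcal{F}$ as the excerpt does), the It\^o isometry in $\hat{\HH}^{\s}$ gives $\E|\mathcal{W}^{\mathcal{A},\mathcal{Q}}(t)|_{\s}^2 \le C \sum_{k} \lambda_k^{\s-1} q_k = C\,\Tr((-A)^{\s-1}Q)$, which is finite precisely when $\s < \s_Q^0$; admissibility (Definition~\ref{defi:admiss2}) guarantees $\s < \s_Q$. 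Continuity of the trajectory then follows either from the factorization method of \cite[p.~128]{DPZ} exactly as in \eqref{eq:def_Gamma}--\eqref{eq:aux_Z} — introducing $\mathcal{Z}^{\mathcal{A},\mathcal{Q}}(t) = \int_0^t (t-r)^{-\epsilon} e^{(t-r)\mathcal{A}}\,d\mathcal{W}^{\mathcal{Q}}(r)$, using that $e^{t\mathcal{A}}$ is bounded, and applying the smoothing operator $\Gamma$ — or by directly citing \cite[Theorem~5.9 and Chapter~5]{DPZ} for stochastic convolutions driven by $C_0$-groups.

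For step (ii), given a version of $\mathcal{W}^{\mathcal{A},\mathcal{Q}}$ in $\hat{\mathcal{C}}_{\s,T}$, set $\hat w = \mathcal{W}^{\mathcal{A},\mathcal{Q}}$ in Proposition~\ref{propo:Ito_wave_1}; this produces $X := \hat Y^{\hat w} = \hat{\Theta}(\mathcal{W}^{\mathcal{A},\mathcal{Q}}) \in \hat{\mathcal{C}}_{\s,T}$ satisfying \eqref{eq:Y^w_wave}, which is exactly \eqref{eq:mild_wave} with the convolution term $\mathcal{W}^{\mathcal{A},\mathcal{Q}}(t)$ in place of $\hat w(t)$. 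Conversely, any mild solution of \eqref{eq:SPDE_wave_X} in the stated sense satisfies \eqref{eq:Y^w_wave} with $\hat w = \mathcal{W}^{\mathcal{A},\mathcal{Q}}$, so by the uniqueness clause of Proposition~\ref{propo:Ito_wave_1} it must coincide with $\hat{\Theta}(\mathcal{W}^{\mathcal{A},\mathcal{Q}})$; this simultaneously delivers uniqueness (step (iii)) and the It\^o map representation. The main obstacle is really step (i): establishing the right spatial regularity $\s < \s_Q$ for the wave stochastic convolution. Unlike the parabolic case, there is no parabolic smoothing, so one cannot gain regularity from the semigroup; the regularity comes purely from the decay of $q_k$ relative to $\lambda_k$, and one must be slightly careful that the $\sin(t\sqrt{-A})/\sqrt{-A}$ factor in the first component gains one derivative (consistent with the state space $\hat{\HH}^\s = \HH^\s \times \HH^{\s-1}$) while the $\cos(t\sqrt{-A})$ factor in the second component gains nothing — so both components land in the correct space simultaneously, and the trace condition $\Tr((-A)^{\s-1}Q) < \infty$ is exactly what is needed. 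Everything else is a routine transcription of the parabolic arguments.
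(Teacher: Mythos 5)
Your proposal is correct and follows exactly the route the paper intends: the paper omits the proof as a ``straightforward generalization'' of Proposition~\ref{propo:Link}, whose three-step argument (regularity of the stochastic convolution, definition of the solution through the It\^o map of Proposition~\ref{propo:Ito_wave_1}, uniqueness by Picard iteration) you reproduce faithfully. Your added It\^o-isometry computation for the two components of $\mathcal{W}^{\mathcal{A},\mathcal{Q}}$, showing that the condition $\Tr((-A)^{\s-1}Q)<+\infty$ is exactly what places both components in $\HH^{\s}\times\HH^{\s-1}$, is the correct wave-specific detail the paper leaves implicit.
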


\subsection{Galerkin discretization}

For any $\ell\in\N^*$, set
$\hat{e}_{2\ell-1}=(e_\ell,0)$ and $\hat{e}_{2\ell}=(0,\sqrt{\lambda_\ell}e_\ell)$. Then $\bigl(\hat{e}_{n}\bigr)_{n\in \N^*}$ is a complete orthonormal system of $\hat{\HH}^0$ such that $\mathcal{A}\hat{e}_{2\ell-1}=-\sqrt{\lambda_{\ell}}\hat{e}_{2\ell}$ and $\mathcal{A}\hat{e}_{2\ell}=\sqrt{\lambda_{\ell}}\hat{e}_{2\ell-1}$. For all $N\in \N^*$, define $\hat{\HH}_N=\text{span}\left\{ \hat{e}_k~;~k\in \left\{1,\ldots,2N\right\}\right\}$
and denote by $\mathcal{P}_N$ the associated orthogonal projector in $\hat{\HH}^0$ onto $\hat{\HH}_N$.

Define $X_N=\bigl(u_N,v_N\bigr)$ as the unique mild solution of
\begin{equation}
dX_N(t)=\mathcal{A}X_N(t)dt+\mathcal{P}_N\mathcal{F}(X(t))dt+\mathcal{P}_Nd\mathcal{W}^{\mathcal{Q}}(t), \; X_N(0)=\mathcal{P}_Nx_0.
\end{equation}
Equivalently,
\begin{equation}
\begin{cases}
du_N(t)=v_N(t)dt\\
dv_N(t)=-\gamma v_N(t)dt+Au_N(t)dt+P_NF(u(t))dt+P_NdW^{Q}(t)
\end{cases}
\end{equation}
with $u_N(0)=P_Nu_0$, $v_N(0)=P_Nv_0$.

Similarly to Theorem~\ref{theo:Ito_N} in the parabolic case, Proposition~\ref{propo:Ito_wave} below gives an expression of $X_N$ in terms of the It\^o map $\hat{\Theta}$.

\begin{propo}\label{propo:Ito_wave}
Set $\hat{R}_N(t)=(\mathcal{P}_N-I)e^{t\mathcal{A}}x_0+\int_{0}^{t}(\mathcal{P}_N-I)e^{(t-r)\mathcal{A}}\mathcal{F}(X_N(r))dr$. Then
$$X_N=\hat{\Theta}(\hat{R}_N+\mathcal{P}_N\mathcal{W}^{\mathcal{A},\mathcal{Q}}).$$
Moreover, set $\overline{\s}=\min(1/2,\s_0,\s_Q)$. Then for any $\epsilon\in(0,\overline{\s})$, the following moment estimate is satisfied: there exists $C_{\overline{\s}-\epsilon,T}\in(0,+\infty)$ such that
$$\sup_{N\in\N^*}\E \big\|X_N\|_{\infty,\overline{\s}-\epsilon,T}\leq C_{\s,T}.$$
\end{propo}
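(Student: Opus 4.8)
The plan is to mirror the parabolic argument of Theorem~\ref{theo:Ito_N} and Lemma~\ref{lem:a_priori_bound_s}, using the group $\bigl(e^{t\mathcal{A}}\bigr)_{t\in\R}$ in place of the analytic semigroup. First I would establish the It\^o map identity: by definition of $\hat{R}_N$ and the mild formulation of the equation satisfied by $X_N$, one checks that $X_N$ satisfies precisely \eqref{eq:Y^w_wave} with perturbation $\hat{w}=\hat{R}_N+\mathcal{P}_N\mathcal{W}^{\mathcal{A},\mathcal{Q}}$. Indeed, writing out the variation-of-constants formula for $X_N$ gives
\begin{equation*}
X_N(t)=\mathcal{P}_N e^{t\mathcal{A}}x_0+\int_0^t \mathcal{P}_N e^{(t-r)\mathcal{A}}\mathcal{F}(X_N(r))\,dr+\mathcal{P}_N\mathcal{W}^{\mathcal{A},\mathcal{Q}}(t),
\end{equation*}
(note $\mathcal{P}_N$ commutes with $e^{t\mathcal{A}}$ since $\hat{\HH}_N$ is $\mathcal{A}$-invariant, being spanned by the pairs $\hat{e}_{2\ell-1},\hat{e}_{2\ell}$), and adding and subtracting $(\mathcal{P}_N-I)$-terms regroups this as $e^{t\mathcal{A}}x_0+\int_0^t e^{(t-r)\mathcal{A}}\mathcal{F}(X_N(r))\,dr+\hat{R}_N(t)+\mathcal{P}_N\mathcal{W}^{\mathcal{A},\mathcal{Q}}(t)$. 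By uniqueness in Proposition~\ref{propo:Ito_wave_1} this yields $X_N=\hat{\Theta}(\hat{R}_N+\mathcal{P}_N\mathcal{W}^{\mathcal{A},\mathcal{Q}})$. One must check the admissibility hypothesis $\hat{w}\in\hat{\mathcal{C}}_{\overline\s-\epsilon,T}$ needed to invoke Proposition~\ref{propo:Ito_wave_1}: $\mathcal{P}_N\mathcal{W}^{\mathcal{A},\mathcal{Q}}$ lies in $\hat{\HH}_N\subset\hat{\HH}^{\s}$ for every $\s$ (it is finite-dimensional-valued), and $\hat{R}_N$ is continuous with values in $\hat{\HH}^{\overline\s-\epsilon}$ by the semigroup bounds used below, so this is fine.

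Second I would prove the moment bound. Here the group is only bounded (not smoothing), so one cannot gain spatial regularity from $e^{t\mathcal{A}}$, but the point is that $\mathcal{F}$ maps $\hat{\HH}^{\s_F}$ into $\hat{\HH}^{\s_F}$ (by Assumption~\ref{ass:F_wave} and the definition $\mathcal{F}(x)=(0,F(u)-\gamma v)$, which costs one derivative on the second component exactly matching the shift in $\hat\HH^\s=\HH^\s\times\HH^{\s-1}$), with linear growth. A Picard/Gronwall estimate applied to
\begin{equation*}
\|X_N\|_{\infty,\overline\s-\epsilon,T}\le C|x_0|_{\overline\s-\epsilon}+C\int_0^T\bigl(1+\|X_N(r)\|_{\overline\s-\epsilon}\bigr)\,dr+\|\mathcal{P}_N\mathcal{W}^{\mathcal{A},\mathcal{Q}}\|_{\infty,\overline\s-\epsilon,T}
\end{equation*}
then gives a bound in terms of $|x_0|_{\overline\s-\epsilon}\le|x_0|_{\s_0}$ and $\E\|\mathcal{P}_N\mathcal{W}^{\mathcal{A},\mathcal{Q}}\|_{\infty,\overline\s-\epsilon,T}\le\E\|\mathcal{W}^{\mathcal{A},\mathcal{Q}}\|_{\infty,\overline\s-\epsilon,T}$, the latter being finite uniformly in $N$ by the wave-equation analogue of \cite[Theorem~5.9]{DPZ} provided $\overline\s-\epsilon<\s_Q$ and $\overline\s-\epsilon\le 1/2$ — which holds by the definition $\overline\s=\min(1/2,\s_0,\s_Q)$ and $\epsilon\in(0,\overline\s)$. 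All constants produced by Gronwall depend only on $\gamma$, the Lipschitz/growth constants of $F$, $\lambda_1$, $T$, and the exponents, and in particular the bounds for $F_N=P_N F$ are uniform in $N$, so taking the supremum over $N$ is legitimate.

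The one genuinely delicate point is the regularity analysis of the stochastic convolution $\mathcal{W}^{\mathcal{A},\mathcal{Q}}$ for the wave equation: unlike the parabolic case one relies on the oscillatory structure of $e^{t\mathcal{A}}$ and on the factor $\sqrt{\lambda_k^{-1}}$ appearing in the $u$-component of $e^{t\mathcal{A}}$ (visible in the explicit formulas for $u^t,v^t$), which is exactly what makes $\overline\s\le 1/2$ the natural ceiling and why the exponent $\s_Q$ (with the $(-A)^{-1}$ in Definition~\ref{defi:s_Q}) still governs the available regularity. I would treat this exactly as in the strong-error proof for the parabolic case, using the factorization method adapted to the wave semigroup (this is where the restriction $\s_F\le 1/2$ in Assumption~\ref{ass:F_wave} is used), and cite \cite{DPZ} for the continuous-in-time version with finite moments; since the statement only claims a uniform-in-$N$ moment bound and not a rate, no new estimate beyond what is needed for well-posedness of \eqref{eq:mild_wave} itself is required here, so this step, while the technical heart, reduces to known results.
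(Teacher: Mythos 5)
Your proposal is correct and follows exactly the route the paper intends: the paper omits the proof of this proposition as a "straightforward generalization" of Theorem~\ref{theo:Ito_N} and Lemma~\ref{lem:a_priori_bound_s}, namely rewriting the mild formulation of the projected equation (using that $\mathcal{P}_N$ commutes with $e^{t\mathcal{A}}$ because $\hat{\HH}_N$ is $\mathcal{A}$-invariant) to identify the perturbation $\hat{w}=\hat{R}_N+\mathcal{P}_N\mathcal{W}^{\mathcal{A},\mathcal{Q}}$, and then obtaining the uniform moment bound from a Picard/Gronwall argument whose constants depend only on the Lipschitz constant of $F$, the exponents, and $T$, together with $\E\|\mathcal{P}_N\mathcal{W}^{\mathcal{A},\mathcal{Q}}\|_{\infty,\overline{\s}-\epsilon,T}\leq \E\|\mathcal{W}^{\mathcal{A},\mathcal{Q}}\|_{\infty,\overline{\s}-\epsilon,T}<\infty$. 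Your observation that $\s_F\leq 1/2$ is precisely what makes $\mathcal{F}$ map $\hat{\HH}^{\s}$ into itself for $\s\leq 1/2$ is the right identification of where Assumption~\ref{ass:F_wave} enters.
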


\subsection{Strong and weak error estimates}\label{sect:orders_wave}

We are now in position to state strong and weak error estimates for the convergence of $X_N=(u_N,v_N)$ to $X=(u,v)$ in a space of trajectories $\hat{\mathcal{C}}_{\s,T}$, with appropriate orders of convergence. 
Note that for the weak convergence result,
one can choose test functions $\Phi$ 
depending only on the $u$-component of $X=(u,v)$.

\begin{theo}\label{theo:wave}
Let $x_0=(u_0,v_0)\in \hat{\HH}^{\s_0}$, $T\in(0,+\infty)$ and $\s$ be admissible, with $\s_0\geq \s$. Define $\overline{\s}=\min(1/2,\s_0,\s_Q)$.
Let $\Phi:\hat{\mathcal{C}}_{\s,T}\rightarrow \R$ be an \emph{admissible test function}, \emph{i.e.} it is bounded and of class $\mathcal{C}^2$, with bounded Fr\'echet derivatives of first and second order.
Then for any $\epsilon\in(0,\overline{\s}\wedge(\s_Q-\s))$, there exists $C_{\epsilon,\s},C_{\epsilon,\s}(\Phi)\in(0,+\infty)$, such that for any $N\in\N^*$:
\begin{itemize}
\item the following strong error estimate is satisfied
\begin{equation}\label{eq:strong_wave}
\E\|X-X_N\|_{\infty,\s,T}\leq C_{\epsilon,\s}\Bigl(\frac{1}{\lambda_{N+1}^{(\s_0-\s)/2}}|x_0|_{\s_0}+\frac{1}{\lambda_{N+1}^{(\overline{\s}-\s-\epsilon)/2}}+\frac{1}{\lambda_{N+1}^{(\s_Q-\s-\epsilon)/2}}\Bigr).
\end{equation}
\item the following weak error estimate is satisfied
\begin{equation}\label{eq:weak_wave}
\big|\E[\Phi(X)]-\E[\Phi(X_N)]\big|\leq C_{\epsilon,\s}(\Phi)\Bigl(\frac{1}{\lambda_{N+1}^{(\s_0-\s)/2}}|x_0|_{\s_0}+\frac{1}{\lambda_{N+1}^{(\overline{\s}-\s-\epsilon)/2}}+\frac{1}{\lambda_{N+1}^{\s_Q-\s-\epsilon}}\Bigr).
\end{equation}

\end{itemize}
\end{theo}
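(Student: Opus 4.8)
The plan is to mirror, essentially verbatim, the argument used to prove Theorems~\ref{theo:strong} and~\ref{theo:weak} in the parabolic case, now working in the Hilbert space $\hat{\HH}^\s$ with the group $(e^{t\mathcal{A}})_{t\in\R}$ in place of the analytic semigroup. First I would write $X=\hat{\Theta}(\mathcal{W}^{\mathcal{A},\mathcal{Q}})$ and $X_N=\hat{\Theta}(\mathcal{P}_N\mathcal{W}^{\mathcal{A},\mathcal{Q}}+\hat{R}_N)$ using the preceding proposition, and split both the strong and weak errors into a ``deterministic remainder'' contribution coming from $\hat{R}_N$ and a ``stochastic convolution'' contribution coming from $\mathcal{P}_N^\perp\mathcal{W}^{\mathcal{A},\mathcal{Q}}$. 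Since $\hat{\Theta}$ is Lipschitz (Proposition~\ref{propo:Ito_wave_1}) and, for $\Phi$ admissible, $\hat{\Psi}:=\Phi\circ\hat{\Theta}$ is again admissible (bounded, $\mathcal{C}^2$ with bounded derivatives), both error terms reduce to bounding $\E\|\hat{R}_N\|_{\infty,\s,T}$ and $\E\|\mathcal{P}_N^\perp\mathcal{W}^{\mathcal{A},\mathcal{Q}}\|_{\infty,\s,T}^{2}$, plus, for the weak error, showing the first-order Taylor term vanishes.

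For the remainder term, I would estimate $|\hat{R}_N(t)|_\s$ by splitting as in~\eqref{eq:R_N}: the initial-condition piece $(\mathcal{P}_N-I)e^{t\mathcal{A}}x_0$ is bounded using that $e^{t\mathcal{A}}$ is (for the wave group) only an isometry-type operator, so one gains $\lambda_{N+1}^{-(\s_0-\s)/2}|x_0|_{\s_0}$ from the projection alone, not from any smoothing in $t$. The nonlinear piece $\int_0^t(\mathcal{P}_N-I)e^{(t-r)\mathcal{A}}\mathcal{F}(X_N(r))\,dr$ is handled using the fact that $\mathcal{F}$ maps $\hat{\HH}^{\s_F}$ to $\hat{\HH}^{\s_F}$ (note the shift from $-\s_F$ to $+\s_F$ built into the definition of $\mathcal{F}$ and of $\hat\HH^\s=\HH^\s\times\HH^{\s-1}$), combined with the a priori bound $\sup_N\E\|X_N\|_{\infty,\overline\s-\epsilon,T}<\infty$ from Proposition~\ref{propo:Ito_wave}, yielding a term of order $\lambda_{N+1}^{-(\overline\s-\s-\epsilon)/2}$. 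This is where the exponent $\overline\s=\min(1/2,\s_0,\s_Q)$ enters and why $\s_F\le 1/2$ is assumed in~\ref{ass:F_wave}.

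For the stochastic convolution, the key point is that $\mathcal{P}_N^\perp\mathcal{W}^{\mathcal{A},\mathcal{Q}}=\mathcal{W}^{\mathcal{A},\mathcal{Q}\mathcal{P}_N^\perp}$ takes values in $\hat{\HH}^{\s'}$ for $\s'<\s_Q$, and the projection gains $\lambda_{N+1}^{-(\s_Q-\s-\epsilon)/2}$ in the $\hat{\HH}^\s$-norm; one gets the supremum-in-time bound via the factorization method exactly as in~\eqref{eq:def_Gamma}--\eqref{eq:proof_strong_sto}, using that $e^{t\mathcal{A}}$ is bounded on each $\hat\HH^\s$ uniformly on $[0,T]$ (which replaces the smoothing estimates of Proposition~\ref{prop:regul} — here no extra smoothing is available, but none is needed for the factorization operator $\Gamma$ to map $L^p$ to $\mathcal{C}$ since the kernel $(t-r)^{\epsilon-1}$ is already integrable). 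Squaring and taking expectations, Fernique's theorem gives $\E\|\mathcal{P}_N^\perp\mathcal{W}^{\mathcal{A},\mathcal{Q}}\|_{\infty,\s,T}^{2}\le C\lambda_{N+1}^{-(\s_Q-\s-\epsilon)}$. Combining the three contributions proves~\eqref{eq:strong_wave}.

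For the weak estimate~\eqref{eq:weak_wave}, I perform a second-order Taylor expansion of $\hat\Psi$ at $\mathcal{P}_N\mathcal{W}^{\mathcal{A},\mathcal{Q}}$, so that the error splits into: the remainder contribution controlled via Lipschitz continuity by $\E\|\hat R_N\|_{\infty,\s,T}$ (giving the first two terms of~\eqref{eq:weak_wave} with the same exponents as in the strong bound); the quadratic remainder of the Taylor expansion, bounded by $C\,\E\|\mathcal{P}_N^\perp\mathcal{W}^{\mathcal{A},\mathcal{Q}}\|_{\infty,\s,T}^{2}\le C\lambda_{N+1}^{-(\s_Q-\s-\epsilon)}$ (the doubled exponent, third term of~\eqref{eq:weak_wave}); and the first-order term $\E\big(D\hat\Psi(\mathcal{P}_N\mathcal{W}^{\mathcal{A},\mathcal{Q}}).[\mathcal{P}_N^\perp\mathcal{W}^{\mathcal{A},\mathcal{Q}}]\big)$, which vanishes by the same independence plus centred-Gaussian argument as in the proof of Theorem~\ref{theo:weak}: $\mathcal{P}_N\mathcal{W}^{\mathcal{A},\mathcal{Q}}$ depends only on $\{\beta_i:i\le N\}$ and $\mathcal{P}_N^\perp\mathcal{W}^{\mathcal{A},\mathcal{Q}}$ only on $\{\beta_i:i>N\}$, because $\mathcal{Q}$ commutes with $\mathcal{A}$ (Assumption~\ref{ass:B}) and both are diagonal in the basis $(\hat e_n)$. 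The main obstacle, and the only place where genuine care is needed beyond transcription, is verifying the a priori moment bound and the factorization-method estimates for the wave group, since $e^{t\mathcal{A}}$ provides no parabolic smoothing; one must check that the $\overline\s$-regularity of the stochastic convolution and of $X_N$ is enough to close the Picard/factorization estimates on $[0,T]$, which is exactly what Proposition~\ref{propo:Ito_wave} has already secured. The remaining computations are routine adaptations of Section~\ref{sect:Galerkin_parabolic}.
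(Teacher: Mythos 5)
Your proposal is correct and follows precisely the route the paper intends: the paper in fact omits the proof of Theorem~\ref{theo:wave}, stating only that it is ``very similar to the parabolic case'' and that the key ingredient is the independence of $\mathcal{P}_N\mathcal{W}^{\mathcal{A},\mathcal{Q}}$ and $(I-\mathcal{P}_N)\mathcal{W}^{\mathcal{A},\mathcal{Q}}$ through their dependence on disjoint families $\{\beta_i : i\le N\}$ and $\{\beta_i : i>N\}$ --- exactly the argument you give, together with the correct identification of where $\overline{\s}$ and the condition $\s_F\le 1/2$ enter. One minor imprecision: $\mathcal{A}$ is only block-diagonal (not diagonal) in the basis $(\hat e_n)$ and does not literally commute with $\mathcal{Q}$ on $\hat{\HH}^0$; the independence really rests on $e^{t\mathcal{A}}$ preserving each two-dimensional block $\mathrm{span}\{\hat e_{2\ell-1},\hat e_{2\ell}\}$ with the $\ell$-th block driven by $\beta_\ell$ alone, which is the disjoint-Brownian-motions argument you also state.
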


The proof of the above is very similar to the parabolic case and we omit it.
The key argument to obtain the weak error estimate \eqref{eq:weak_wave} using Proposition~\ref{propo:Ito_wave} is the independence of the $\hat{\mathcal{C}}_{\s,T}$-valued random variables $\mathcal{P}_N\mathcal{W}^{\mathcal{A},\mathcal{Q}}$ and $(I-\mathcal{P}_N)\mathcal{W}^{\mathcal{A},\mathcal{Q}}$: indeed the former depends only on $\{\beta_i\,:\, i \le N\}$, while the latter
only depends on $\{\beta_i\,:\, i > N\}$.

\begin{rem}
When $\gamma=0$, one may replace the definition of $\overline{\s}$ with $\overline{\s}=\min(1-\s_F,\s_0,\s_Q)$.
Note that in the general case $\gamma>0$, $1-\s_F\geq 1/2\geq \overline{\s}$: as a consequence, the orders of convergence in \eqref{eq:strong_wave} and \eqref{eq:weak_wave} do not depend on $\s_F$.
\end{rem}

\section{Parabolic Equation with non-commuting noise}\label{sect:non_comm}



In this section, we return to the parabolic SPDE~\eqref{eq:SPDE}, studied in Sections~\ref{sect:Ito_parabolic} and~\ref{sect:Galerkin_parabolic}, and we remove the assumption that $A$ and $Q$ commute (see Assumption~\ref{ass:B}). Instead, from now on we consider Assumption~\ref{ass:B_gen} below.
Note that the strong convergence analysis (Theorem~\ref{theo:strong}) is not modified. However, the assumption that $A$ and $Q$ commute has been crucial to prove Theorem~\ref{theo:weak}; now, under Assumption~\ref{ass:B_gen}, the processes $P_NW^{A,Q}$ and $P_{N}^{\perp}W^{A,Q}$ are not necessarily independent.
In order to generalize Theorem~\ref{theo:weak}, we introduce two auxiliary processes:
\begin{itemize}
\item $W^{A,Q,(N)}$ defined by~\eqref{eq:truncated} is the stochastic convolution with truncated noise $P_NW$ instead of $W$.
\item $\rho_N=W^{A,Q,(N)}-P_NW^{A,Q}$, which involves a commutator. 
\end{itemize}

We first state and prove a general result, Theorem~\ref{theo:weak_gen}, where we repeat the arguments of the proof of Theorem~\ref{theo:weak}; an extra error term appears when effectively $Q$ and $A$ do not commute. In general, we are not able to identify the order of convergence. We thus consider a specific case, where explicit computations allow us to prove that the weak order of convergence is twice the strong one, see Theorem~\ref{theo:weak_special}.


\subsection{Assumptions}


The assumption that $A$ and $Q$ do not necessarily commute is expressed in Assumption~\ref{ass:B_gen} below, thanks to the introduction of a new complete orthonormal system $(f_k)_{k\in \N^*}$ of $H$.

\begin{hyp}\label{ass:B_gen}
There exists a complete orthonormal system $(f_k)_{\in\N^*}$, and a bounded sequence $(q_k)_{k\in \N^*}$ of nonnegative real numbers, such that the bounded linear operator $Q\in\mathcal{L}(H)$ satisfies for any $k\in \N^*$
$$Qf_k=q_kf_k.$$
Define also the self-adjoint operator $B\in\mathcal{L}(H)$ as follows: for all $x\in H$
$$Bx=\sum_{k=1}^{+\infty}\sqrt{q_k}\langle x,f_k \rangle f_k.$$
\end{hyp}

Note that the $Q$-cylindrical Wiener process $W^Q$ is such that
\begin{equation}\label{eq:expansions_bis}
\begin{gathered}
W^Q(t)=\sum_{k\in\N^*}\sqrt{q_k}\tilde{\beta}_{k}(t)f_k=BW(t),\\
W(t)=\sum_{k\in\N^*}\beta_{k}(t)e_k=\sum_{k\in\N^*}\tilde{\beta}_{k}(t)f_k,
\end{gathered}
\end{equation}
where $\bigl(\tilde{\beta}_k\bigr)_{k\in\N^*}$ is a sequence of independent, standard, real-valued Wiener processes.
The spectral Galerkin discretization of the unique mild solution $X$ of ~\eqref{eq:SPDE} is still defined by projection onto $H_N$, see~\eqref{eq:SPDE_projN}.

Let $\Phi$ (see Definition~\ref{defi:test_funct}) be an admissible test function. The aim is to study the weak error $e_N(\Phi,\s)$ defined by~\eqref{eq:e_N_Phi}. Following the proof of Theorem~\ref{theo:weak}, using the Lipschitz continuity of $\Phi\circ \Theta$,
$$|e_N(\Phi,\s)|\leq C\E\|R_N\|_{\infty,\s,T}+\tilde{e}_N(\Phi,\s),$$
with
\begin{equation}\label{eq:tilde_e_N_Phi}
\tilde{e}_N(\Phi,\s)=\E[\Phi\circ\Theta(W^{A,Q})]-\E[\Phi\circ\Theta(P_NW^{A,Q})].
\end{equation}
The error term $\E\|R_N\|_{\infty,\s,T}$ is controlled in Theorem~\ref{theo:strong}. Thus in the sequel we focus on controlling the auxiliary weak error~\eqref{eq:tilde_e_N_Phi}.



\subsection{Truncation of the noise}


Introduce the Wiener process $W^{(N)}$ and the associated stochastic convolution $W^{A,Q,(N)}$ with truncation at level $N$:
\begin{equation}\label{eq:truncated}
\begin{gathered}
W^{(N)}(t)=P_NW(t)=\sum_{n=1}^{N}\beta_n(t)e_n,\\
W^{A,Q,(N)}(t)=\int_{0}^{t}e^{(t-r)A}BdW^{(N)}(r).
\end{gathered}
\end{equation}
The process $W^{A,Q,(N)}$ is equal to the stochastic convolution $W^{A,Q_N}$, with the covariance operator $Q_N=BP_NB$.

The key observation is the independence of the $\mathcal{C}_{\s,T}$-valued random variables $W^{A,Q,(N)}$ and $W^{A,Q}-W^{A,Q,(N)}$, since the former depends only on $\{\beta_i\,:\, i \le N\}$, while the latter only depends on $\{\beta_i\,:\, i > N\}$.
Applying the strategy of the proof of Theorem~\ref{theo:weak}, we obtain, with $\Psi=\Phi\circ \Theta$,
\begin{equs}
\Big|\E[\Phi&\circ\Theta(W^{A,Q})]-\E[\Phi\circ\Theta(W^{A,Q,(N)})]\Big|\\&=\Big|\E\Bigl(\Psi(W^{A,Q})-\Psi(W^{A,Q,(N)})-D\Psi(W^{A,Q,(N)}).[W^{A,Q}-W^{A,Q,(N)}]\Bigr)\Big|\\
&\leq C(\Psi)\E\|W^{A,Q}-W^{A,Q,(N)}\|_{\infty,\s,T}^2.\label{eq:weak_error_trunc}
\end{equs}
The inequality above is interesting, but is not sufficient for our purpose. Indeed, the stochastic convolution $W^{A,Q,(N)}$ does not {\it a priori} take values in $H_N$.

\subsection{The general weak error analysis}

We now decompose the weak error \eqref{eq:tilde_e_N_Phi} as follows:
\begin{equation}\label{eq:decomp_e_tilde}
\begin{aligned}
\tilde{e}_N(\Phi,\s)&=\E[\Phi\circ\Theta(W^{A,Q})]-\E[\Phi\circ\Theta(P_NW^{A,Q})]\\
&=\E[\Phi\circ\Theta(W^{A,Q})]-\E[\Phi\circ\Theta(W^{A,Q,(N)})\\
&+\E[\Phi\circ\Theta(W^{A,Q,(N)})-\E[\Phi\circ\Theta(P_NW^{A,Q})].
\end{aligned}
\end{equation}
Observe that for any $t\in[0,T]$
$$\rho_N(t):=W^{A,Q,(N)}(t)-P_{N}W^{A,Q}(t)=\int_{0}^{t}e^{(t-r)A}[B,P_N]dW(r),$$
with the \emph{commutator} $[B,P_N]=BP_N-P_NB$.

The first error term in \eqref{eq:decomp_e_tilde} is controlled thanks to \eqref{eq:weak_error_trunc}. Moreover, using the Lipschitz continuity of $\Phi\circ\Theta$, there exists $C_{\s}(\Phi)\in(0,+\infty)$ such that for any $N\in\N^*$
\begin{equation}\label{eq:error_commu}
\E[\Phi\circ\Theta(W^{A,Q,(N)})-\E[\Phi\circ\Theta(P_NW^{A,Q})]\leq C_{\s}(\Phi)\E\|\rho_N\|_{\s,T,\infty}.
\end{equation}
Note that using a second-order Taylor expansion would not lead to an improved order of convergence for this term.

As a consequence, \eqref{eq:weak_error_trunc} and \eqref{eq:error_commu} give for any $N\in\N^*$
\begin{equation}\label{eq:weak_error_decompo}
\begin{aligned}
\big|\tilde{e}_N(\Phi,\s)\big|&\leq C\Bigl(\E\|\rho_N\|_{\s,T,\infty}+\E\|W^{A,Q}-W^{A,Q,(N)}\|_{\s,T,\infty}^{2}\Bigr)\\
&\leq C_{\s}(\Phi)\Bigl(\E\|\rho_N\|_{\s,T,\infty}+2\E\|\rho_N\|_{\s,T,\infty}^2+2\E\|W^{A,Q}-P_NW^{A,Q}\|_{\s,T,\infty}^{2}\Bigr).
\end{aligned}
\end{equation}

It is not difficult to prove that $\E\|\rho_N\|_{\s,T,\infty}^2\underset{N\to +\infty}\to 0$. However, the identification of the order of convergence in the general case is not easy. In Section~\ref{sect:example_non_comm} below, we perform this task in a specific example.


Thus we have proved the following general result.

\begin{theo}\label{theo:weak_gen}
Let $T\in(0,+\infty)$. Let Assumptions \ref{ass:A}, \ref{ass:F} and \ref{ass:B_gen} hold. Assume that the initial condition satisfies $u_0\in\HH^{\s_0}$, and that $\s$ is an admissible parameter (see Definition \ref{defi:admiss}), with $\s_0\geq \s$.
Let $\Phi:\mathcal{C}_{\s,T}\rightarrow \R$  be an admissible test function. Then for any $\epsilon\in(0,\s_Q-\s)$, there exists a constant $C_{\epsilon,\s}(\Phi)\in(0,+\infty)$, such that for any $N\in\N^*$
\begin{equation}\label{eq:weak_gen}
\begin{aligned}
\big|e_N(\Phi,\s)\big|&\leq C_{\epsilon,\s}(\Phi)\Bigl(\frac{1}{\lambda_{N+1}^{(\s_0-\s)/2}}|u_0|_{\s_0}+\frac{1}{\lambda_{N+1}^{1-(\s_F+\s+\epsilon)/2}}+\frac{1}{\lambda_{N+1}^{\s_Q-\s-\epsilon}}\Bigr)\\
&~\hspace{1.2in}+C_{\epsilon,\s}(\Phi)\bigl(\E\|\rho_N\|_{\s,T,\infty}^2\bigr)^{1/2}.
\end{aligned}
\end{equation}
\end{theo}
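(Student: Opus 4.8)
The proof essentially assembles the pieces already laid out in the excerpt, so my plan is to follow the decomposition \eqref{eq:decomp_e_tilde} and glue together the estimates \eqref{eq:weak_error_trunc}, \eqref{eq:error_commu}, and the strong-error bound \eqref{eq:proof_strong_sto} from Theorem~\ref{theo:strong}. First I would record, exactly as in the proof of Theorem~\ref{theo:weak}, that since $\Psi = \Phi\circ\Theta$ is an admissible test function (Theorem~\ref{theo:Itomap}), it is globally Lipschitz, so the $R_N$-contribution to $e_N(\Phi,\s)$ is bounded by $C_{\s}(\Phi)\,\E\|R_N\|_{\infty,\s,T}$, which by \eqref{eq:proof_strong_deter} produces the first two terms $\lambda_{N+1}^{-(\s_0-\s)/2}|u_0|_{\s_0} + \lambda_{N+1}^{-1+(\s_F+\s+\epsilon)/2}$ on the right-hand side of \eqref{eq:weak_gen}. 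This reduces everything to bounding the auxiliary weak error $\tilde e_N(\Phi,\s)$ of \eqref{eq:tilde_e_N_Phi}.

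Next I would split $\tilde e_N(\Phi,\s)$ along \eqref{eq:decomp_e_tilde} into the ``truncation'' piece $\E[\Psi(W^{A,Q})] - \E[\Psi(W^{A,Q,(N)})]$ and the ``commutator'' piece $\E[\Psi(W^{A,Q,(N)})] - \E[\Psi(P_N W^{A,Q})]$. For the truncation piece, the crucial input is the independence of $W^{A,Q,(N)}$ and $W^{A,Q} - W^{A,Q,(N)}$ (the former built from $\{\beta_i : i\le N\}$, the latter from $\{\beta_i : i > N\}$), together with the fact that $W^{A,Q} - W^{A,Q,(N)}$ is centred Gaussian; a second-order Taylor expansion of $\Psi$ then kills the first-order term exactly as in the proof of Theorem~\ref{theo:weak}, giving \eqref{eq:weak_error_trunc}: the bound $C(\Psi)\,\E\|W^{A,Q} - W^{A,Q,(N)}\|_{\infty,\s,T}^2$. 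Then I write $W^{A,Q} - W^{A,Q,(N)} = (W^{A,Q} - P_N W^{A,Q}) - \rho_N$, use $(a+b)^2 \le 2a^2 + 2b^2$, and invoke \eqref{eq:proof_strong_sto} to control $\E\|W^{A,Q} - P_N W^{A,Q}\|_{\infty,\s,T}^2 \le C\lambda_{N+1}^{-(\s_Q-\s-\epsilon)}$, which yields the third term $\lambda_{N+1}^{-(\s_Q-\s-\epsilon)}$. For the commutator piece I only use Lipschitz continuity of $\Psi$ to get \eqref{eq:error_commu}, namely the bound $C_{\s}(\Phi)\,\E\|\rho_N\|_{\infty,\s,T}$; combining with Jensen's inequality $\E\|\rho_N\|_{\infty,\s,T} \le (\E\|\rho_N\|_{\infty,\s,T}^2)^{1/2}$, and absorbing the quadratic $\E\|\rho_N\|_{\infty,\s,T}^2$ term into the same quantity (it is dominated by $(\E\|\rho_N\|_{\infty,\s,T}^2)^{1/2}$ up to a constant once we note $\E\|\rho_N\|_{\infty,\s,T}^2 \to 0$, or simply by keeping it as is since it is of higher order), produces the last term $(\E\|\rho_N\|_{\infty,\s,T}^2)^{1/2}$ in \eqref{eq:weak_gen}.

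There is not really a hard obstacle here, since the theorem is deliberately stated with the commutator term $(\E\|\rho_N\|_{\infty,\s,T}^2)^{1/2}$ left unestimated — the genuinely delicate analysis is deferred to Section~\ref{sect:example_non_comm}. The only point requiring a little care is bookkeeping: making sure that the quadratic-in-$\rho_N$ term appearing in \eqref{eq:weak_error_decompo} is indeed subsumed by the linear term, which follows because $\|\rho_N\|_{\infty,\s,T}$ has moments of all orders (by Fernique, since $\rho_N$ is Gaussian) and tends to $0$, so $\E\|\rho_N\|_{\infty,\s,T}^2 \le C(\E\|\rho_N\|_{\infty,\s,T}^2)^{1/2}$ for $N$ large; alternatively one simply keeps both terms and notes they are both $o(1)$ of the stated form. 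I would also flag that the constant $C_{\epsilon,\s}(\Phi)$ depends on $\Phi$ only through $\|\Phi\|_{\mathcal{C}^2}$ and on the operator norms of $D\Theta$, $D^2\Theta$, all finite by Theorem~\ref{theo:Itomap}, so the estimate is genuinely uniform in $N$. Collecting the four contributions gives \eqref{eq:weak_gen} and completes the proof.
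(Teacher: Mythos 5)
Your proposal is correct and follows essentially the same route as the paper: reduce to $\tilde e_N(\Phi,\s)$ via Lipschitz continuity of $\Phi\circ\Theta$ and the bound on $\E\|R_N\|_{\infty,\s,T}$, split along \eqref{eq:decomp_e_tilde}, kill the first-order Taylor term in the truncation piece by the independence of $W^{A,Q,(N)}$ and the centred Gaussian $W^{A,Q}-W^{A,Q,(N)}$, bound the commutator piece by Lipschitz continuity, and absorb the quadratic $\rho_N$-term using $(a+b)^2\le 2a^2+2b^2$ together with \eqref{eq:proof_strong_sto}. The bookkeeping point you flag (dominating $\E\|\rho_N\|_{\infty,\s,T}^2$ by its square root, using boundedness of that sequence) is exactly the implicit step in the paper's passage from \eqref{eq:weak_error_decompo} to \eqref{eq:weak_gen}.
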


The first part of the right-hand side of~\eqref{eq:weak_gen} exactly corresponds with the right-hand side of~\eqref{eq:weak} (since $\rho_N=0$ in this case): the non-commutation of $A$ and $Q$ makes the second part of the right-hand side of~\eqref{eq:weak_gen} appear. If this fourth error term converges faster to $0$ than the third one, one thus recovers the same rates of convergence as in the commuting case.





\subsection{Identification of the weak order for a specific non-commuting example}\label{sect:example_non_comm}

We consider the following SPDE with periodic boundary conditions on $(0,1)$ driven by a $Q$-Wiener process $W^Q$:
\begin{equation}\label{eq:SPDE_special}
du(t)=Au(t)dt+F(u(t))dt+dW^Q(t) \; , \; X(0)=0.
\end{equation}


We slightly modify the framework exposed in Section~\ref{sect:Not_Ass}, in order to simplify computations below: we now consider the Hilbert space of square integrable complex-valued functions. The new framework is given by Assumption~\ref{ass:special}.

\begin{hyp}\label{ass:special}
\begin{itemize}
\item The state space $H=L^{2}(\mathbb{T})$, with $\mathbb{T}=\R/\Z$, is the space of $\mathbb{C}$-valued square integrable functions, with sesquilinear form given by
$$\langle f,g \rangle=\int_{0}^{1}f(\xi)\overline{g(\xi)}d\xi.$$
\item The Fourier basis is denoted as follows: for any $n\in\Z$ ($\Z$ denotes the set of integers) and any $\xi\in\R$
$$e_n(\xi)=\exp(2i\pi n\xi).$$
\item For any $N\in\N^*$, $H_N=\text{span}\left\{e_{-N},\ldots,e_0,\ldots,e_N\right\}$, and $P_N$ is the orthogonal projector on $H_N$.
\item The linear operator $A$ satisfies for any $n\in\Z$
$$Ae_n=-\lambda_ne_n$$
with $\lambda_n=4n^2\pi^2+1$.
\item Let $b:\mathbb{T}\rightarrow\R$ be a function of class $\mathcal{C}^{\rr}$, for some $\rr\geq 2$, such that $b>0$ everywhere. Then $B:H\rightarrow H$ is the multiplication operator $x\mapsto Bx=b.x$. $B$ is a self-adjoint bounded operator; moreover it is invertible, with $B^{-1}:x\mapsto b^{-1}.x$.
\item $b$ is not constant: as a consequence $A$ and $B$ do not commute.
\item The Fourier coefficients of $b$ are defined as follows: for $n\in\Z$,  $b_n=\langle b,e_n\rangle=\int_{0}^{1}b(\xi)\exp(-2i\pi n\xi)d\xi$.
\item The covariance operator $Q$ is defined as $Q=B^2$.
\end{itemize}
\end{hyp}
Note that there exists $C_{\rr}\in(0,+\infty)$ such that for any $n\in\Z$
$$|b_n|\leq C_{\rr}|n|^{-\rr}\leq C_{\rr}\lambda_{n}^{-\rr/2}.$$
Due to Assumption~\ref{ass:special}, $B$ is an invertible operator: as a consequence the process $X$ defined by \eqref{eq:SPDE_special} possesses the same regularity as in the case when $B=I$: this gives $\s_Q=1/2$. Indeed, for any $\s\in\R$,
$$\Tr(A^{\s-1}Q)=\Tr(BA^{\s-1}B)=|b|_{H}^{2}\Tr(A^{\s-1}),$$
where $|b|_{H}^{2}=\int_{0}^{1}|b(\xi)|^2d\xi$.

According to Theorem~\ref{theo:weak_gen}, we need to control $\E\|\rho_N\|_{\infty,\s,T}^2$, where
$$\rho_N(t)=\int_{0}^{t}e^{(t-r)A}[B,P_N]dW(r).$$
The main result required to prove a weak convergence theorem
for the specific non-commuting example is the following lemma whose
proof is deferred to a final subsection.

\begin{lemme}\label{lem:weak_special_comm}
Under Assumption \ref{ass:special}, for any $\epsilon\in(0,1/2-\s)$ there exists a constant $C_{\epsilon,\s,T}\in(0,+\infty)$ such that for any $N\in\N^{*}$
\begin{equation}\label{eq:weak_special_comm}
\E\|\rho_N\|_{\infty,\s,T}^2\leq \frac{C_{\epsilon,\s,T}}{\lambda_{N}^{1-\s-\epsilon}}.
\end{equation}
\end{lemme}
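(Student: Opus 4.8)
The plan is to compute $\E\|\rho_N\|_{\infty,\s,T}^2$ explicitly by exploiting the Fourier-space structure of the problem. First I would write $[B,P_N]dW(r)$ in the Fourier basis. Since $Bx = b.x$ has matrix entries $\langle Be_m, e_n\rangle = b_{n-m}$, the operator $[B,P_N] = BP_N - P_NB$ annihilates the ``fully inside'' and ``fully outside'' blocks and only couples frequencies $n$ with $|n|\le N$ to frequencies $m$ with $|m| > N$ (and vice versa), with coefficient $\pm b_{n-m}$. Writing $W(r) = \sum_m \beta_m(r) e_m$, one gets $\rho_N(t) = \int_0^t e^{(t-r)A}\sum_{n,m} c_{n,m} b_{n-m} e_n\, d\beta_m(r)$, where $c_{n,m} \in \{0,\pm 1\}$ is supported on the ``cross'' region $\{|n|\le N < |m|\} \cup \{|m| \le N < |n|\}$. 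Because $e^{(t-r)A}$ is diagonal in this basis, this is a sum of independent scalar stochastic convolutions, and the $\HH^\s$-norm squared decouples as $|\rho_N(t)|_\s^2 = \sum_n \lambda_n^\s \bigl|\int_0^t e^{-(t-r)\lambda_n}\sum_m c_{n,m}b_{n-m}\,d\beta_m(r)\bigr|^2$.

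Next I would estimate the expected supremum. The cleanest route is the factorization method already used in the proof of Theorem~\ref{theo:strong}: write $\rho_N = \Gamma(P_N^\perp$-type auxiliary process$)$ with $\Gamma$ as in~\eqref{eq:def_Gamma}, so that controlling $\E\|\rho_N\|_{\infty,\s,T}^2$ reduces (via the mapping property of $\Gamma$ from $L^p$ to $\mathcal{C}$ and Fernique's theorem, since everything is Gaussian) to a bound on $\sup_{t\in[0,T]}\E|Z_N(t)|_{\s+\epsilon}^2$ for the auxiliary process $Z_N(t) = \int_0^t (t-r)^{-\epsilon}e^{(t-r)A}[B,P_N]dW(r)$. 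Then by Itô isometry and the diagonalization above,
\begin{equs}
\E|Z_N(t)|_{\sigma}^2 = \sum_{n}\lambda_n^{\sigma}\int_0^t (t-r)^{-2\epsilon}e^{-2(t-r)\lambda_n}\,dr \;\sum_{m} c_{n,m}^2 \,b_{n-m}^2\;,
\end{equs}
and $\int_0^t (t-r)^{-2\epsilon}e^{-2(t-r)\lambda_n}dr \le C\lambda_n^{2\epsilon-1}$. So the whole quantity is bounded by $C\sum_n \lambda_n^{\sigma - 1 + 2\epsilon}\sum_{m:\, c_{n,m}\ne 0} b_{n-m}^2$.

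The remaining task, which I expect to be the main obstacle, is the combinatorial/arithmetic estimate of $S_N := \sum_n \lambda_n^{\sigma-1+2\epsilon}\sum_{m} c_{n,m}^2 b_{n-m}^2$ over the cross region. I would split into the two symmetric pieces. On $\{|n|\le N < |m|\}$: here $\lambda_n^{\sigma - 1 + 2\epsilon}$ is at most of order $\lambda_N^{(\sigma-1+2\epsilon)_+}$ if $\sigma - 1 + 2\epsilon > 0$, or uniformly bounded otherwise; and for fixed $n$ with $|n| \le N$, the inner sum $\sum_{|m|>N} b_{n-m}^2$ is controlled using the decay $|b_k| \le C_m |k|^{-m}$ by something like $C (N-|n|+1)^{-(2m-1)}$, which after summing over $|n|\le N$ contributes a convergent factor (using $m \ge 2$); the dominant scaling comes out as $\lambda_N^{-(1-\sigma-2\epsilon)}$ up to logarithmic or constant factors. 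On $\{|m|\le N < |n|\}$: here $\lambda_n$ is large, $\lambda_n \gtrsim \lambda_{N+1}$, and we sum $\lambda_n^{\sigma-1+2\epsilon} b_{n-m}^2$; since $\sigma - 1 + 2\epsilon < -1/2 < 0$ (as $\sigma < 1/2$ and $\epsilon$ small) and $|b_{n-m}|$ decays fast, one again extracts a factor $\lambda_{N+1}^{\sigma-1+2\epsilon}\cdot(\text{convergent})$, bounded by $C\lambda_N^{-(1-\sigma-2\epsilon)}$. Combining, $S_N \le C_{\epsilon,\s,T}\lambda_N^{-(1-\sigma-2\epsilon)}$, and absorbing the factor of $2$ in $\epsilon$ (i.e. replacing $2\epsilon$ by $\epsilon$) gives exactly~\eqref{eq:weak_special_comm}. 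The delicate points are making sure the regularity hypothesis $m\ge 2$ is genuinely what is needed for the inner sums to converge, and tracking that the exponent in $\lambda_N$ is $1 - \sigma - \epsilon$ and not worse; I would carry out the $n$-summation carefully using $\lambda_n \sim n^2$ and comparing sums with integrals.
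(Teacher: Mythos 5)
This is essentially the paper's own proof: the factorization method reducing the problem to $\sup_{t\le T}\E|\mathcal{Z}_N(t)|_{\s}^2$ for the auxiliary Gaussian process, It\^o's isometry in the Fourier basis, the cross-region structure of the commutator ($\langle[B,P_N]e_m,e_n\rangle=\pm b_{n-m}$ exactly when one of $|m|,|n|$ is $\le N$ and the other is $>N$), and the decay $|b_j|\le C_{\rr}|j|^{-\rr}$ with $\rr\ge 2$. The one loose step is the piece with output frequency $|n|\le N$ and noise frequency $|m|>N$: discarding $\lambda_n^{\s-1+2\epsilon}$ as ``uniformly bounded'' and then summing the inner bound $(N-|n|+1)^{-(2\rr-1)}$ over $|n|\le N$ yields only $O(1)$, so to extract the claimed $\lambda_N^{-(1-\s-\epsilon)}$ you must retain both factors and split the $n$-sum (the paper splits at $|n|=N/2$: for $|n|\le N/2$ the Fourier decay alone gives $O(N^{-(2\rr-2)})$, while for $N/2\le|n|\le N$ one has $\lambda_n\gtrsim\lambda_N$) --- this is precisely the ``careful $n$-summation'' you defer, and it does go through.
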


Since $\s_Q=1/2$ and $\s\ge 0$, we have $1/2-\s/2-\epsilon/2\ge \s_Q-\s-\epsilon$: this means that in the example treated in this section, the fourth error term in~\eqref{eq:weak_gen} is bounded from above by the third error term (up to a multiplicative constant). 
Thanks to Theorem~\ref{theo:weak_gen}, we thus immediately obtain the following generalization of Theorem~\ref{theo:weak} in a non-commuting example, with the same rates of convergence.
\begin{theo}\label{theo:weak_special}
Under Assumptions \ref{ass:special} and \ref{ass:F},let $u_0\in\HH^{\s_0}$, and let $\s$ be an admissible parameter with $\s_0\geq \s$. Let $\Phi:\mathcal{C}_{\s,T}\rightarrow \R$  be an admissible test function.
Then for any $\epsilon\in(0,\s_Q-\s)$, there exists a constant $C_{\epsilon,\s}(\Phi)\in(0,+\infty)$, not depending on $N\in\N^*$, such that
\begin{equation}\label{eq:weak_special}
\big|e_N(\Phi,\s)\big|\leq C_{\epsilon,\s}(\Phi)\Bigl(\frac{1}{\lambda_{N+1}^{(\s_0-\s)/2}}|u_0|_{\s_0}+\frac{1}{\lambda_{N+1}^{1-(\s_F+\s+\epsilon)/2}}+\frac{1}{\lambda_{N+1}^{1/2-\s-\epsilon}}\Bigr).
\end{equation}
\end{theo}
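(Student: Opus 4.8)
The plan is to deduce Theorem~\ref{theo:weak_special} directly from Theorem~\ref{theo:weak_gen} together with Lemma~\ref{lem:weak_special_comm}, so the actual work reduces to a comparison of exponents. First I would record that, under Assumption~\ref{ass:special}, the operators $A$ and $B$ satisfy Assumption~\ref{ass:B_gen} with $Q = B^2$, that $B$ is invertible, and that consequently $\s_Q = 1/2$ (this is computed in the excerpt via $\Tr(A^{\s-1}Q) = |b|_H^2 \Tr(A^{\s-1})$). Since $\s$ is admissible we have $\s < \s_Q = 1/2$, so the range $\epsilon \in (0,\s_Q - \s) = (0, 1/2 - \s)$ is exactly the range in which both Theorem~\ref{theo:weak_gen} and Lemma~\ref{lem:weak_special_comm} apply.

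Next I would invoke Theorem~\ref{theo:weak_gen}: for any admissible test function $\Phi$ and any $\epsilon \in (0,\s_Q-\s)$,
\begin{equation*}
\big|e_N(\Phi,\s)\big|\leq C_{\epsilon,\s}(\Phi)\Bigl(\frac{1}{\lambda_{N+1}^{(\s_0-\s)/2}}|u_0|_{\s_0}+\frac{1}{\lambda_{N+1}^{1-(\s_F+\s+\epsilon)/2}}+\frac{1}{\lambda_{N+1}^{\s_Q-\s-\epsilon}}\Bigr)+C_{\epsilon,\s}(\Phi)\bigl(\E\|\rho_N\|_{\s,T,\infty}^2\bigr)^{1/2}.
\end{equation*}
By Lemma~\ref{lem:weak_special_comm}, $\bigl(\E\|\rho_N\|_{\infty,\s,T}^2\bigr)^{1/2}\leq C_{\epsilon,\s,T}\,\lambda_N^{-(1-\s-\epsilon)/2}$. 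The point is then to check that this extra term is dominated by the third term already present. Since $\s_Q = 1/2$, the third term has exponent $\s_Q - \s - \epsilon = 1/2 - \s - \epsilon$, and I need $(1-\s-\epsilon)/2 \geq 1/2 - \s - \epsilon$; rearranging, this is $1/2 - \s/2 - \epsilon/2 \geq 1/2 - \s - \epsilon$, i.e.\ $\s/2 + \epsilon/2 \geq 0$, which holds since $\s \geq 0$ and $\epsilon > 0$. (One also uses $\lambda_N \ge c \lambda_{N+1}$ for a harmless constant, or simply that $\lambda_N \to \infty$ is comparable to $\lambda_{N+1}$ up to changing constants; in the periodic case $\lambda_n = 4n^2\pi^2+1$ makes this explicit.) Hence $\bigl(\E\|\rho_N\|_{\infty,\s,T}^2\bigr)^{1/2}$ is bounded, up to a constant, by $\lambda_{N+1}^{-(1/2-\s-\epsilon)}$ after relabelling $\epsilon$, and can be absorbed into the third term of~\eqref{eq:weak_gen}.

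Finally I would rewrite the resulting bound using $\s_Q = 1/2$ to obtain exactly~\eqref{eq:weak_special}, namely the three terms with exponents $(\s_0-\s)/2$, $1-(\s_F+\s+\epsilon)/2$ and $1/2-\s-\epsilon$. There is essentially no obstacle here beyond bookkeeping: the only mild subtlety is the interchangeability of $\lambda_N$ and $\lambda_{N+1}$ in the bounds and the re-absorption of $\epsilon$'s after possibly shrinking $\epsilon$, both of which are routine. All the genuine analytic content — the strong estimate feeding the first two terms and the third term, the independence argument behind Theorem~\ref{theo:weak_gen}, and the commutator estimate of Lemma~\ref{lem:weak_special_comm} — is imported, so the proof of Theorem~\ref{theo:weak_special} itself is a short combination step.
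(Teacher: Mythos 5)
Your proposal is correct and follows the paper's own route exactly: Theorem~\ref{theo:weak_special} is obtained by combining Theorem~\ref{theo:weak_gen} with Lemma~\ref{lem:weak_special_comm}, using $\s_Q=1/2$ and the elementary comparison $1/2-\s/2-\epsilon/2\ge 1/2-\s-\epsilon$ to absorb the commutator term into the third term of~\eqref{eq:weak_gen}. The paper treats this as an immediate consequence, and your bookkeeping (including the harmless $\lambda_N$ versus $\lambda_{N+1}$ point) matches it.
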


%
%
%
%

\begin{proof}[Proof of Lemma~\ref{lem:weak_special_comm}]
We use the factorization formula
$$\rho_N=\Gamma\bigl(\mathcal{Z}_N\bigr),$$
where $\Gamma$ is defined by \eqref{eq:def_Gamma} and $\mathcal{Z}_N$ is the Gaussian process such that for all $t\in[0,T]$
$$\mathcal{Z}_N(t)=\int_{0}^{t}(t-r)^{-\epsilon/2}e^{(t-r)A}[B,P_N]dW(r).$$
To obtain a control of $\E\|\rho_N\|_{0,T,\infty}^{2}$, it thus suffices to bound
$\sup_{0\leq t\leq T}\E|\mathcal{Z}_{N}(t)|_{\s}^{2}$.
For any $t\in[0,T]$, by It\^o's formula from \cite{DPZ} which
applies under the conditions of this section,
\begin{align*}
\E|\mathcal{Z}_N(t)|_{\s}^{2}&=\int_{0}^{t}\frac{1}{(t-r)^{\epsilon}}\Tr\Bigl((-A)^{\s/2}e^{(t-r)A}[B,P_N][B,P_N]^{T}e^{(t-r)A}(-A)^{\s/2}\Bigr)dr\\
&=\int_{0}^{t}\frac{1}{(t-r)^{\epsilon}}\sum_{k\in\Z}\big|(-A)^{\s/2}e^{(t-r)A}[B,P_N]e_k\big|^2 dr\\
&=\int_{0}^{t}\frac{1}{(t-r)^{\epsilon}}\sum_{k,\ell\in\Z}\big|\langle(-A)^{\s/2}e^{(t-r)A}[B,P_N]e_k,e_{\ell}\rangle\big|^2 dr\\
&= \sum_{k,\ell\in\Z}\int_{0}^{t}\big|\langle[B,P_N]e_k,e_{\ell}\rangle\big|^2\frac{\lambda_{\ell}^{\s}e^{-2\lambda_{\ell}(t-r)}}{(t-r)^{\epsilon}} dr.
\end{align*}
Straightforward computations give
\begin{equs}
be_k&=\sum_{\ell\in\Z}\langle be_k,e_{\ell}\rangle e_{\ell}=\sum_{\ell\in\Z}b_{\ell-k}e_{\ell}\\
\big|\langle[B,P_N]e_k,e_{\ell}\rangle\big|&=|b_{\ell-k}|\Bigl(\mathds{1}_{|k|>N}\mathds{1}_{|\ell |\leq N}+\mathds{1}_{|k|\leq N}\mathds{1}_{|\ell |> N}\Bigl).
\end{equs}
As a consequence, the following equality is satisfied: for any $N\in\N^*$
\begin{equation}\label{eq:expand_error_comm}
\begin{aligned}
\E|\mathcal{Z}_N(t)|_{\s}^{2}&=\sum_{|k|\leq N}\sum_{|\ell|>N}|b_{\ell-k}|^2\int_{0}^{t}\frac{\lambda_{\ell}^{\s}e^{-2\lambda_{\ell}(t-r)}}{(t-r)^{\epsilon}}dr\\
&~+\sum_{|k|>N}\sum_{|\ell|\leq N}|b_{\ell-k}|^2\int_{0}^{t}\frac{\lambda_{\ell}^{\s}e^{-2\lambda_{\ell}(t-r)}}{(t-r)^{\epsilon}}dr.
\end{aligned}
\end{equation}

We now prove bounds on the terms of the right-hand side of~\eqref{eq:expand_error_comm}.
Note that there exists $C_{\epsilon,\s,T}\in(0,+\infty)$ such that for any $\ell\in\Z$
$$\int_{0}^{t}\frac{\lambda_{\ell}^{\s}e^{-2\lambda_{\ell}(t-r)}}{(t-r)^{\epsilon}}dr\leq \frac{C_{\epsilon,\s,T}}{\lambda_{\ell}^{1-\s-\epsilon}}\leq \frac{C_{\epsilon,\s,T}}{(|\ell|+1)^{2(1-\s-\epsilon)}}.$$
The first term in \eqref{eq:expand_error_comm} is bounded as follows: for any $N\in\N^*$
\begin{equs}
\sum_{|k|\leq N}\sum_{|\ell|>N}&|b_{\ell-k}|^2\int_{0}^{t}\frac{\lambda_{\ell}^{\s}e^{-2\lambda_{\ell}(t-r)}}{(t-r)^{\epsilon}}dr \leq C\sum_{|k|\leq N}\sum_{| \ell |> N}\frac{1}{|\ell-k|^{2\rr}}\frac{1}{(|\ell|+1)^{2-2\s-2\epsilon}}\\
&\leq \sum_{\ell=N+1}^{+\infty}\frac{C}{\ell^{2-2\s-2\epsilon}}\sum_{k\leq N}\frac{1}{(\ell-k)^{2\rr}}
\leq \frac{C}{N^{2-2\s-2\epsilon}}\sum_{\ell=N+1}^{+\infty}\frac{1}{(\ell-N)^{2\rr-1}}
\leq \frac{C}{\lambda_{N+1}^{1-\s-\epsilon}},
\end{equs}
where the last series converges since $2\rr-1>1$.


The second term in \eqref{eq:expand_error_comm} is bounded similarly: for any $N\in\N^*$,
\begin{align*}
\sum_{|k|>N}\sum_{|\ell|\leq N}|b_{\ell-k}|^2\int_{0}^{t}\frac{\lambda_{\ell}^{\s}e^{-2\lambda_{\ell}(t-r)}}{(t-r)^{\epsilon}}dr&\leq C\sum_{|k|>N}\sum_{|\ell|\leq N}\frac{1}{|\ell-k|^{2\rr}}\frac{1}{(|\ell|+1)^{2-2\s-2\epsilon}}\\
&\leq C\sum_{k=N+1}^{+\infty}\sum_{|\ell|\leq N}\frac{1}{(k-\ell)^{2\rr}}\frac{1}{(|\ell|+1)^{2-2\s-2\epsilon}}.
\end{align*}
On the one hand,
$$\sum_{k=N+1}^{+\infty}\sum_{-N\leq \ell \leq N/2}\frac{1}{(k-\ell)^{2\rr}}\frac{1}{(|\ell|+1)^{2-2\s-2\epsilon}}\leq \sum_{k=N+1}^{+\infty}\frac{CN}{k^{2\rr}}\leq \frac{C}{N^{2\rr-2}}\leq \frac{C}{\lambda_{N+1}^{\rr-1}}.$$
On the other hand,
\begin{equ}
\sum_{k=N+1}^{+\infty}\sum_{\ell=N/2}^{N}\frac{1}{(k-\ell)^{2\rr}}\frac{1}{(\ell+1)^{2-2\s-2\epsilon}}\leq \frac{C}{N^{2-2\s-2\epsilon}}\sum_{j=1}^{+\infty}\frac{j}{j^{2\rr}}\leq \frac{C}{\lambda_{N+1}^{1-\s-\epsilon}}\;,
\end{equ}
since $2\rr-1> 1$.
Thus for any $t\in[0,T]$
$$\E|\mathcal{Z}_N(t)|_{\s}^{2}\leq C\frac{C}{\lambda_{N+1}^{1-\s-\epsilon}}.$$
This concludes the proof of Lemma~\ref{lem:weak_special_comm}.
\end{proof}

\section*{Acknowledgments}

C.-E.B. would like to thank the Warwick Mathematics Institute for their hospitality during his visit in 2013 and 
R\'egion Bretagne for funding this visit. The
work of A.M.S. was supported by EPSRC, ERC and ONR. M.H. was supported by the ERC and the Philip Leverhulme trust.

%
%

\begin{thebibliography}{10}

\bibitem{AnderssonKovacsLarsson:14}
A.~Andersson, M.~Kov{\'a}cs, and S.~Larsson.
\newblock Weak error analysis for semilinear stochastic volterra equations with
  additive noise.
\newblock {\em J. Math. Anal. Appl.}, 437(2): 1283--1304, 2016.

\bibitem{AnderssonKruseLarsson:13}
A.~Andersson, R.~Kruse, and S.~Larsson.
\newblock Duality in refined watanabe-sobolev spaces and weak approximations of
  spde.
\newblock {\em Stochastic Partial Differential Equations: Analysis
  and Computations}, 4:113--149, 2016.

\bibitem{AnderssonLarsson:13}
A.~Andersson and S.~Larsson.
\newblock Weak convergence for a spatial approximation of the nonlinear
  stochastic heat equation.
\newblock {\em Math. Comp.}, 85(299): 1335--1358, 2016.

\bibitem{Brehier:14}
C.-E. Br{\'e}hier.
\newblock Approximation of the invariant measure with an {E}uler scheme for
  stochastic {PDE}s driven by space-time white noise.
\newblock {\em Potential Anal.}, 40(1):1--40, 2014.

\bibitem{ConusJentzenKurniawan:14}
D.~Conus, A.~Jentzen, and R.~Kurniawan.
\newblock Weak convergence rates of spectral galerkin approximations for spdes
  with nonlinear diffusion coefficients.
\newblock arxiv:1408.1108, 2014.

\bibitem{DPZ}
G.~Da~Prato and J.~Zabczyk.
\newblock {\em Stochastic equations in infinite dimensions}, volume~44 of {\em
  Encyclopedia of Mathematics and its Applications}.
\newblock Cambridge University Press, Cambridge, 1992.

\bibitem{Debussche:11}
A.~Debussche.
\newblock Weak approximation of stochastic partial differential equations: the
  nonlinear case.
\newblock {\em Math. Comp.}, 80(273):89--117, 2011.

\bibitem{DebusschePrintems:09}
A.~Debussche and J.~Printems.
\newblock Weak order for the discretization of the stochastic heat equation.
\newblock {\em Math. Comp.}, 78(266):845--863, 2009.

\bibitem{GeissertKovacsLarsson:09}
M.~Geissert, M.~Kov{\'a}cs, and S.~Larsson.
\newblock Rate of weak convergence of the finite element method for the
  stochastic heat equation with additive noise.
\newblock {\em BIT}, 49(2):343--356, 2009.

\bibitem{GrahamTalay}
C.~Graham and D.~Talay.
\newblock {\em Stochastic simulation and {M}onte {C}arlo methods}, volume~68 of
  {\em Stochastic Modelling and Applied Probability}.
\newblock Springer, Heidelberg, 2013.
\newblock Mathematical foundations of stochastic simulation.

\bibitem{Hausenblas:03}
E.~Hausenblas.
\newblock Weak approximation for semilinear stochastic evolution equations.
\newblock In {\em Stochastic analysis and related topics {VIII}}, volume~53 of
  {\em Progr. Probab.}, pages 111--128. Birkh\"auser, Basel, 2003.

\bibitem{JacobeJentzenWelti:15}
L.~Jacobe~de Naurois, A.~Jentzen, and T.~Welti.
\newblock Weak convergence rates for spatial spectral galerkin approximations
  of semilinear stochastic wave equations with multiplicative noise.
\newblock arxiv:1508.05168, 2015.

\bibitem{JentzenKloeden:11}
A.~Jentzen and P.~E. Kloeden.
\newblock {\em Taylor approximations for stochastic partial differential
  equations}, volume~83 of {\em CBMS-NSF Regional Conference Series in Applied
  Mathematics}.
\newblock Society for Industrial and Applied Mathematics (SIAM), Philadelphia,
  PA, 2011.

\bibitem{JentzenKurniawan:15}
A.~Jentzen and R.~Kurniawan.
\newblock Weak convergence rates for euler-type approximations of semilinear
  stochastic evolution equations with nonlinear diffusion coefficients.
\newblock arxiv:1501.03539, 2015.

\bibitem{kloeden1992numerical}
P.~E. Kloeden and E.~Platen.
\newblock Numerical solution of stochastic differential equations
  springer-verlag.
\newblock {\em New York}, 1992.

\bibitem{KovacsLarssonLindgren:12}
M.~Kov{\'a}cs, S.~Larsson, and F.~Lindgren.
\newblock Weak convergence of finite element approximations of linear
  stochastic evolution equations with additive noise.
\newblock {\em BIT}, 52(1):85--108, 2012.

\bibitem{KovacsLarssonLindgren:13}
M.~Kov{\'a}cs, S.~Larsson, and F.~Lindgren.
\newblock Weak convergence of finite element approximations of linear
  stochastic evolution equations with additive noise {II}. {F}ully discrete
  schemes.
\newblock {\em BIT}, 53(2):497--525, 2013.

\bibitem{Kruse:14}
R.~Kruse.
\newblock {\em Strong and weak approximation of semilinear stochastic evolution
  equations}, volume 2093 of {\em Lecture Notes in Mathematics}.
\newblock Springer, Cham, 2014.

\bibitem{LordPowellShardlow:14}
G.~J. Lord, C.~E. Powell, and T.~Shardlow.
\newblock {\em An introduction to computational stochastic {PDE}s}.
\newblock Cambridge Texts in Applied Mathematics. Cambridge University Press,
  New York, 2014.

\bibitem{mattingly2012diffusion}
J.~C. Mattingly, N.~S. Pillai, A.~M. Stuart, et~al.
\newblock Diffusion limits of the random walk metropolis algorithm in high
  dimensions.
\newblock {\em The Annals of Applied Probability}, 22(3):881--930, 2012.

\bibitem{MilsteinTretyakov}
G.~N. Milstein and M.~V. Tretyakov.
\newblock {\em Stochastic numerics for mathematical physics}.
\newblock Scientific Computation. Springer-Verlag, Berlin, 2004.

\bibitem{Pazy}
A.~Pazy.
\newblock {\em Semigroups of linear operators and applications to partial
  differential equations}, volume~44 of {\em Applied Mathematical Sciences}.
\newblock Springer-Verlag, New York, 1983.

\bibitem{PillaiStuartThiery:14}
N.~S. Pillai, A.~M. Stuart, and A.~H. Thi{\'e}ry.
\newblock Noisy gradient flow from a random walk in hilbert space.
\newblock {\em Stochastic Partial Differential Equations: Analysis and
  Computations}, 2(2):196--232, 2014.

\bibitem{pillai2012optimal}
N.~S. Pillai, A.~M. Stuart, A.~H. Thi{\'e}ry, et~al.
\newblock Optimal scaling and diffusion limits for the langevin algorithm in
  high dimensions.
\newblock {\em The Annals of Applied Probability}, 22(6):2320--2356, 2012.

\bibitem{Wang:16}
X.~Wang.
\newblock Weak error estimates of the exponential {E}uler scheme for
  semi-linear {SPDE}s without {M}alliavin calculus.
\newblock {\em Discrete Contin. Dyn. Syst.}, 36(1):481--497, 2016.

\bibitem{WangGan:13}
X.~Wang and S.~Gan.
\newblock Weak convergence analysis of the linear implicit {E}uler method for
  semilinear stochastic partial differential equations with additive noise.
\newblock {\em J. Math. Anal. Appl.}, 398(1):151--169, 2013.

\end{thebibliography}

\end{document}